\newcommand{\RR}{{\mathbb R}}
\newcommand{\Bcal}{\mathcal{B}}
\newcommand{\Dcal}{\mathcal{D}}
\newcommand{\Ecal}{\mathcal{E}}
\newcommand{\Fcal}{\mathcal{F}}
\newcommand{\Gcal}{\mathcal{G}}
\newcommand{\Hcal}{\mathcal{H}}
\newcommand{\Ical}{\mathcal{I}}
\newcommand{\Jcal}{\mathcal{J}}
\newcommand{\Mcal}{\mathcal{M}}
\newcommand{\Rcal}{\mathcal{R}}
\newcommand{\Xcal}{\mathcal{X}}
\newcommand{\VBar}{\overline{V}}
\newcommand{\Ubar}{\bar{U}}
\renewcommand{\hbar}{\bar{h}}
\newcommand{\ubar}{\bar{u}}
\newcommand{\vbar}{\bar{v}}
\newcommand{\Util}{\tilde{U}}
\newcommand{\htil}{\tilde{h}}
\newcommand{\util}{\tilde{u}}
\newcommand{\vtil}{\tilde{v}}
\newcommand{\abs}[1]{\left|#1\right|}
\newcommand{\norm}[1]{\left\|#1\right\|}
\DeclareMathOperator{\supp}{supp}
\renewcommand{\a}{\alpha}
\renewcommand{\b}{\beta}
\newcommand{\g}{\gamma}
\renewcommand{\d}{\delta}
\newcommand{\e}{\varepsilon}
\newcommand{\z}{\zeta}
\newcommand{\et}{\eta}
\renewcommand{\th}{\theta}
\renewcommand{\l}{\lambda}
\newcommand{\m}{\mu}
\newcommand{\n}{\nu}
\newcommand{\x}{\xi}
\renewcommand{\r}{\rho}
\newcommand{\s}{\sigma}
\renewcommand{\t}{\tau}
\newcommand{\ps}{\psi}
\renewcommand{\O}{\Omega}
\newcommand{\rd}{\partial}
\newcommand{\rhotil}{\tilde{\rho}}
\newcommand{\rbar}{\bar{\rho}}
\newcommand{\kbar}{\overline{\kappa}}
\newcommand{\kubar}{\underline{\kappa}}
\newcommand{\bbar}{\overline{\beta}}
\newcommand{\bubar}{\underline{\beta}}
\newcommand{\one}[1]{\mathbf{1}_{\{#1\}}}
\definecolor{black}{rgb}{0.0, 0.0, 0.0}
\definecolor{red}{rgb}{1.0, 0.5, 0.5}
\title[   ]{Stability of Riemann Shocks for isothermal Euler by Inviscid limits of global-in-time large Navier-Stokes flows}
\author[Eo]{Saehoon Eo}
\address[Saehoon Eo]
{ Department of Mathematics, \newline
Stanford University \\
CA 94305, USA}
\email{eosehoon@stanford.edu}
\author[Eun]{Namhyun Eun}
\address[Namhyun Eun]
{ Department of Mathematical Sciences, \newline
Korea Advanced Institute of
Science and Technology \\
Daejeon 34141, Korea}
\email{namhyuneun@kaist.ac.kr}
\author[Kang]{Moon-Jin Kang}
\address[Moon-Jin Kang]
{ Department of Mathematical Sciences, \newline
Korea Advanced Institute of
Science and Technology \\
Daejeon 34141, Korea}
\email{moonjinkang@kaist.ac.kr}
\author[Oh]{HyeonSeop Oh}
\address[HyeonSeop Oh]
{ Department of Mathematical Sciences, \newline
Korea Advanced Institute of
Science and Technology \\
Daejeon 34141, Korea}
\email{ohs2509@kaist.ac.kr}
\newtheorem{theorem}{Theorem}[section]
\newtheorem{lemma}{Lemma}[section]
\newtheorem{proposition}{Proposition}[section]
\newtheorem{remark}{Remark}[section]
\newcommand{\deo}{\d_0}
\newcommand{\red}[1]{\textcolor{red}{#1}}
\numberwithin{figure}{section}
\newcommand{\beq}{\begin{equation}}
\newcommand{\eeq}{\end{equation}}
\newcommand{\bsp}{\begin{split}}
\newcommand{\esp}{\end{split}}
\newcommand{\wc}{\rightharpoonup}
\newtheorem{theo}{Theorem}[section]
\newtheorem{lem}[theo]{Lemma}
\newcommand{\vt}{{\tilde{v}}}
\newcommand{\ut}{{\tilde{u}}}
\newcommand{\Ut}{{\tilde{U}}}
\newcommand{\vtn}{{\tilde{v}^\nu}}
\newcommand{\utn}{{\tilde{u}^\nu}}
\newcommand{\htn}{{\tilde{h}^\nu}}
\newcommand{\vn}{{v^\nu}}
\newcommand{\un}{{u^\nu}}
\newcommand{\hn}{{h^\nu}}
\newcommand{\Phif}[2]{\Phi\Big(\frac{#1}{#2}\Big)}
\newcommand{\bpf}[1]{\noindent\textbf{Proof of \eqref{#1}:}} 
\newcommand{\step}[1]{\vskip0.2cm \noindent{\it Step #1:} }
\begin{document}

\date{\today}

\subjclass{35Q30, 76N10, 35B35}
\keywords{Isothermal Euler, Isothermal Navier-Stokes, Strong solutions, Existence, Shock, Uniqueness, Stability, Vanishing viscosity limit, Relative entropy, Hyperbolic conservation laws}

\thanks{\textbf{Acknowledgment.} This work was supported by Samsung Science and Technology Foundation under Project Number SSTF-BA2102-01.}

\begin{abstract}
In this paper, we study the isothermal gas dynamics.
We first establish the global existence of strong solutions to the one-dimensional isothermal Navier-Stokes system for smooth initial data without any smallness conditions, assuming that the initial density has strictly positive lower bound.
The existence result allows for possibly degenerate viscosity coefficients and admits different asymptotic states at the far fields.
We then prove a contraction property for the strong solutions perturbed from viscous shocks, yielding uniform estimates with respect to the viscosity coefficients.
This covers any large perturbations, and consequently, we establish the inviscid limits and their stability estimate.
In other words, we demonstrate the stability of Riemann shocks to the one-dimensional isothermal Euler system in the class of vanishing viscosity limits of the associated Navier-Stokes system.
\end{abstract}
\maketitle \centerline{\date}

\tableofcontents


\section{Introduction}
\setcounter{equation}{0}

In this article, we consider the one-dimensional compressible isothermal Navier-Stokes system.
In the Lagrangian mass coordinates, the system is given as
\begin{align}
\left\{
\begin{aligned} \label{inveq}
    &v_t^\nu-u_x^\nu = 0, \\
    &u_t^\nu+p(v^\nu)_x = \nu\left(\m(v)\frac{u_x^\nu}{v^\nu}\right)_x,
\end{aligned}
\right.
\end{align}
where \(v,u,p(v)\) and \(\m(v)\) respectively represent the specific volume, fluid velocity, pressure law and the viscosity coefficient.
We consider the polytropic gas, especially an isothermal case, and the possibly degenerate viscosity coefficient near vacuum :
\begin{equation} \label{pressure}
p(v)=1/v, \qquad \m(v)= b v^{-\a}
\end{equation}
for \(b>0\) and \(\a\in[0,1]\), but here we set \(b=1\) for simplicity as the value of \(b\) does not affect our analysis. 
Moreover, \(\n>0\) is a parameter for vanishing viscosity limits.

Our study encompasses both the scenario of commonly assumed case of constant viscosity, i.e., \(\a=0\), and the situation where the viscosity degenerates in the vicinity of a vacuum.
From a physical perspective, the viscosity is temperature-dependent (see Chapman-Cowling \cite{ChapmanCowling90}) and, in isothermal conditions, it varies with density, or equivalently with the specific volume \(v\) when described in Lagrangian mass coordinates.

The one-dimensional compressible Navier-Stokes system with constant viscosity coefficient, have been extensively investigated by many authors.
For smooth initial data with density uniformly bounded away from vacuum, the existence of weak solutions to the viscous and heat-conducting flows was first established by Kazhikov and Shelukhin \cite{Kazhikov77}.
In contrast, for discontinuous initial data under the same assumption that the initial density remains strictly positive, Serre \cite{Serre86} addressed both the isothermal and barotropic cases.
Extending this framework, Hoff \cite{Hoff98} demonstrated the global existence of weak solutions even for large discontinuous data that may approach different states as \(x\to\pm\infty\).
Furthermore, he also showed that the solution maintains strictly positive density, thereby excluding the formation of vacuum states in finite time.
In higher dimension cases, analogous result for smooth data in the context of viscous and heat-conducting flows was obtained in \cite{MN79} by Matsumura-Nishida, and later Feireisl \cite{Feireisl04} extended the theory by establishing the global existence results even when the initial density is allowed to vanish, while Hoff \cite{Hoff95} investigated both the isothermal and barotropic cases for discontinuous data.
On the other hands, for the density-dependent viscosity cases, although Liu-Xin-Yang \cite{LXY98} derived the compressible Navier-Stokes equations with degenerate viscosity, the mathematical theory for the isothermal case with degenerate viscosity remains largely undeveloped.

Even though the isothermal flow more accurately captures certain physical phenomena than barotropic model-for example, gas flow through a long tube (see \cite{Shapiro})-it has predominantly been treated as a special case of barotropic flows.
While numerous studies include the isothermal case, many focus exclusively on the regime where the adiabatic constant \(\g>1\).
As a result, in comparison to the barotropic regime, the isothermal Navier-Stokes system remains relatively less understood.
In particular, Mellet-Vasseur \cite{MVSIMA} established the existence of global large strong solutions for the one-dimensional isentropic Navier-Stokes equations under assumptions of the degenerate viscosity, different asymptotic states at the far fields, and the initial density bounded away from vacuum. The result was extended by Haspot \cite{Haspot} to the case of $\alpha\in (1/2,1]$. Then, Constantin-Drivas-Nguyen-Pasqualotto \cite{CDNP} covered the case of $\alpha\ge 0$ and $\gamma\in [\alpha, \alpha+1]$ with $\gamma>1$, but they studied the system on the periodic domain. The result \cite{CDNP} is extended by Kang-Vasseur \cite{KV-Existence} to the whole space with different asymptotic states.
Such results, however, are not yet available for the isothermal case, which we address in the present paper.\\

Our interest also lies in the inviscid limits of the strong solutions of isothermal Navier-Stokes systems to construct a perturbation class on which the uniqueness and stability of Riemann shock of the compressible Euler systems would hold. 
Note that at least formally, as \(\n \to 0+\), the system \eqref{inveq} converges to the isothermal Euler system
\begin{align}
\left\{
\begin{aligned} \label{Euler}
    &v_t-u_x = 0, \\
    &u_t+p(v)_x = 0.
\end{aligned}
\right.
\end{align}


The well-posedness of the compressible Euler systems has been a fundamental and long-standing subject of intensive study by many researchers.
In a celebrated work, Glimm \cite{Glimm65} established the global-in-time existence of small BV entropic solutions.
Several years after Glimm's pioneering work, Nishida \cite{Nishida68} employed Glimm's random choice method to analyze the case of isothermal flow and established the global existence of solutions for arbitrary BV data without imposing any smallness condition.
Moreover, in the context of the uniqueness and stability, Dafermos \cite{D96} and Diperna \cite{Diperna79} proved that Lipschitz solutions are unique and \(L^2\)-stable within the class of bounded entropic solutions.

On the other hands, addressing discontinuous patterns presents an entirely different challenge.
The previous uniqueness and stability results for special discontinuous solutions, including Riemann shocks, required suitable regularity conditions, such as locally BV or strong trace property (see Chen-Frid-Li \cite{ChenFridLi02} and Vasseur et al. \cite{ChenKrupaVasseur22, GKV23, KV16, Vasseur16}).
On the case of isothermal gas, the stability of arbitrary BV solutions under strong trace property was recently proved in \cite{ChengIsothermal} by Cheng.
However, studying the uniqueness and stability of shocks without any technical conditions for perturbations remains a challenging open problem.
A natural approach to avoid such technical conditions is to consider a set of inviscid limits.
The concept of inviscid limits was first introduced by Stokes \cite{stokes1998difficulty}, and has since been employed to construct entropy solutions to the isentropic Euler system (see \cite{DiPerna83, DiPerna83CMP, Hoff-Liu, Goodman-Xin, Yu99}).
Notably, Chen-Perepelitsa \cite{ChenPerepelitsa10} established the  convergence of solutions to the Navier-Stokes system with constant viscosity coefficients (i.e., \(\a=0\)) towards an entropy solution to the isentropic Euler system, utilizing the method of compensated compactness.
It is noteworthy that, to date, there are no available results employing compensated compactness directly for the isothermal Navier-Stokes system.
However, there exists a result addressing the case of asymptotically isothermal pressure \cite{SMSS20}.
We also refer to \cite{Huang-Zhen02} for the application of the compensated compactness method in the isothermal case; however, their analysis is based on an artificial viscous system rather than the physical Naiver-Stokes systems.

As a breakthrough result in the context of stability to the discontinuous patterns, Bianchini-Bressan \cite{BianchiniBressan05} constructed a global unique and stable entropy solution with small BV initial data, obtained as the vanishing artificial viscosity limit of full parabolic systems.
In a more physically relevant settings, Kang-Vasseur \cite{KV-Inven} first proved the stability of weak Riemann shocks to the isentropic Euler system within a class of inviscid limits from the associated barotropic Navier-Stokes system.
This result was extended to the cases of composite waves involving shocks (see \cite{HWWW24,KV-JDE}).
Recently, Chen-Kang-Vasseur \cite{ChenKangVasseur24arxiv} constructed global unique and stable small BV solutions to the isentropic Euler system from Navier-Stokes systems.
We also refer to \cite{EEK-BNSF} for the result on the stability of Riemann shocks for the full Euler system within a class of inviscid limits from the Brenner-Navier-Stokes-Fourier systems.

However, to the best of our knowledge, the well-posedness of Riemann shocks through inviscid limits for the isothermal Euler system remains open.
In this paper, we present the first result on this problem.

\subsection{Global existence of strong solutions to Navier-Stokes systems}
Our first result is on the global existence of large strong solutions to the isothermal Navier-Stokes system.
In contrast to the inviscid limit part, the solution existence argument is based on the Eulerian coordinates, 
describes the isothermal Navier-Stokes system in the following form:
\begin{align}
\left\{
\begin{aligned} \label{NS-Eulerian}
    &\r_t + (\r u)_x = 0, \\
    &(\r u)_t + (\r u^2 + p(\r))_x = (\m(\r)u_x)_x,
\end{aligned}
\right.
\end{align}
where \(\r,u,p(\r)\) and \(\m(\r)\) respectively represent the density, fluid velocity, pressure law, and viscosity coefficient.
The pressure law and the viscosity coefficient are given by
\begin{equation} \label{pressure-Eulerian}
p(\r)=\r \qquad \m(\r)= b \r^\a,
\end{equation}
for \(b>0\).
We remark that a slight abuse of notations (on the space variable \(x\), velocity \(u\), pressure \(p\), and the viscosity \(\m\)) is introduced here, but it should not cause any confusion.

As mentioned earlier, for the barotropic cases, Mellet-Vasseur \cite{MVSIMA} established the global existence of a large strong solution for the degenerate power \(\a \in [0,1/2)\), and afterwards, it was extended by Kang-Vasseur \cite{KV-Existence} to the case where \(\a\ge1/2\).
Thus, for the isothermal case, we here follow \cite{MVSIMA} and \cite{KV-Existence} to establish the global existence of the case when \(\a\in[0,1/2)\) first and then we proceed to cope with the case when \(\a\in[1/2,1]\).

We first present the global existence (and uniqueness) result of a large strong solution to \eqref{NS-Eulerian} for the case when \(\m(\r)\) degenerates strictly slower than \(\r^{1/2}\) near \(\r=0\).

\begin{theorem} \label{thm:MV}
Let \(\r_0\) and \(u_0\) be the initial data such that 
\[
0<\kubar_0\le\r_0(x)\le\kbar_0, \quad \r_0-\rbar\in H^2(\RR), \quad u_0-\ubar \in H^1(\RR),
\]
for some constants \(\kubar_0,\kbar_0\), and where \(\rbar\) and \(\ubar\) are smooth monotone functions such that
\begin{equation} \label{sm-end-E}
\rbar(x)=\r_\pm \quad \text{and} \quad
\ubar(x)=u_\pm, \quad \text{when } \pm x \ge 1.
\end{equation}
Let \(\n:\RR^+\to\RR^+\) be a function such that for some constants \(C>0\) and \(q\in[0,1/2)\).
\begin{equation} \label{MV-lb}
\n(y) \ge
\left\{
\begin{aligned}
&Cy^q &&\forall y\le1, \\
&C &&\forall y\ge1,
\end{aligned}\right.
\end{equation}
and
\begin{equation} \label{MV-ub}
\n(y) \le  C+Cy \quad \forall y\ge0.
\end{equation}
Then, there exists a global-in-time unique strong solution \((\r,u)\) of \eqref{NS-Eulerian}-\eqref{pressure-Eulerian} with \(\m=\n\) such that for any \(T>0\), there exist positive constants \(\bubar(T)\) and \(\bbar(T)\) satisfying
\begin{align*}
&\r-\rbar \in L^\infty(0,T;H^1(\RR)),\\
&u-\ubar \in L^\infty(0,T;H^1(\RR)) \cap L^2(0,T;H^2(\RR)),\\
&\bubar(T)\le\r(t,x)\le\bbar(T), \quad \forall(t,x)\in[0,T]\times\RR.
\end{align*}
\end{theorem}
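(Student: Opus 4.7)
The plan is to adapt the argument of Mellet--Vasseur \cite{MVSIMA} to the isothermal pressure $p(\rho)=\rho$, combining local-in-time existence, a priori estimates on $[0,T]$, and a continuation principle. Local existence of a strong solution with $\rho$ strictly positive follows from classical parabolic theory applied to the perturbation $(\rho-\bar\rho,\,u-\bar u)$, since the system is uniformly parabolic as long as $\rho$ stays away from $0$ and $\infty$. By the standard continuation principle the proof then reduces to producing bounds $\bubar(T)\le\rho\le\bbar(T)$ together with the claimed $H^1$ regularity for $\rho-\bar\rho$ and $L^\infty_tH^1\cap L^2_tH^2$ regularity for $u-\bar u$ on any time window of existence.

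First I would derive the basic energy estimate via the relative entropy $\eta(\rho|\bar\rho)=\rho\log(\rho/\bar\rho)-(\rho-\bar\rho)$, which is the entropy compatible with the isothermal pressure. Testing the momentum equation with $u-\bar u$ and combining with the balance law for $\eta(\rho|\bar\rho)$ gives
\[
\frac{d}{dt}\mathcal{E}(t)+\!\int_{\RR}\nu(\rho)u_x^2\,dx\le C(T)\big(1+\mathcal{E}(t)\big),\quad \mathcal{E}(t):=\!\int_{\RR}\!\Big[\tfrac12\rho(u-\bar u)^2+\eta(\rho|\bar\rho)\Big]dx,
\]
where the right-hand side absorbs the inhomogeneous contributions from the far-field profiles, which are integrable because $\bar\rho_x,\bar u_x$ are supported in $[-1,1]$ by \eqref{sm-end-E}. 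The second key input is the Bresch--Desjardins entropy: testing the momentum equation with $(\phi(\rho))_x$, where $\phi'(\rho)=\nu(\rho)/\rho^2$, produces an $L^\infty_tL^2_x$ bound on $\sqrt\rho\,(u-\bar u+(\phi(\rho))_x)$ and, since $p'(\rho)\equiv 1$, a dissipation $\int_0^T\!\int_{\RR}\nu(\rho)\rho^{-2}\rho_x^2\,dx\,ds<\infty$. Under \eqref{MV-lb} with $q\in[0,1/2)$ this controls a spatial derivative of $\rho^{(q-1)/2}$ in $L^2_tL^2_x$.

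The main obstacle is the pointwise upper and lower bounds on $\rho$, which I would establish by adapting the Mellet--Vasseur Moser-type iteration to the linear pressure. The strategy rests on the transported identity $(\log\rho)_t+u(\log\rho)_x=-u_x$ together with control of the effective viscous flux $F:=\nu(\rho)u_x-\rho$, whose $L^\infty_tL^2_x$ norm is bounded by the energy and BD estimates above. One then iterates $L^p$ estimates for $\rho^{\pm k}$ by multiplying the continuity equation by suitable powers of $\rho$, using the BD control of the derivative of $\rho^{(q-1)/2}$ together with Sobolev embedding to upgrade to $L^\infty_tL^\infty_x$ bounds from above and below. The hypothesis $q<1/2$ is used exactly here: it is what keeps $\nu(\rho)^{-1/2}$ in a sufficiently strong Lebesgue space near the vacuum so that the iteration closes and no vacuum can form in finite time, while the upper bound \eqref{MV-ub} prevents blow-up at large density. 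Time dependence of $\bubar(T),\bbar(T)$ is unavoidable because the Gronwall constants accumulate exponentially.

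Once $\rho$ is pinched between two positive constants on $[0,T]$, the momentum equation becomes strictly parabolic with smooth coefficients, and standard energy estimates testing with $u_{xx}$ yield $u-\bar u\in L^\infty(0,T;H^1)\cap L^2(0,T;H^2)$; differentiating the continuity equation then gives $\rho-\bar\rho\in L^\infty(0,T;H^1)$. Uniqueness is a standard weak--strong argument closed by a Gronwall inequality on a weighted $L^2$ energy of the difference of two solutions, which is legitimate thanks to the uniform positivity of $\rho$. Combining local existence with these a priori bounds produces the global strong solution claimed in Theorem~\ref{thm:MV}.
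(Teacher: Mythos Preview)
Your overall architecture---local parabolic theory, the relative-entropy estimate (this is Lemma~\ref{lem:uni-est1} in the paper), the Bresch--Desjardins estimate, density bounds, and then standard parabolic regularity---is exactly the paper's. The divergence is in how you obtain the density \emph{upper} bound, and there you miss the point that is genuinely new in the isothermal case.

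You describe the density bounds as a ``Mellet--Vasseur Moser-type iteration'' together with the effective viscous flux $F=\nu(\rho)u_x-\rho$. Neither ingredient appears in \cite{MVSIMA}: there the bounds come from direct one-dimensional Sobolev embedding on powers of $\rho$, and the effective-flux device is a Hoff-type argument that typically requires constant viscosity. More to the point, the barotropic Mellet--Vasseur upper bound rests on controlling $\partial_x(\rho^{(\gamma-1)/2})$ via the BD estimate and embedding $W^{1,1}_{loc}\hookrightarrow L^\infty$; when $\gamma=1$ the quantity $\rho^{(\gamma-1)/2}$ collapses to a constant and the argument is vacuous. The paper's substitute (Proposition~\ref{prop:r-ub}) is to work with $\log r$ where $r=\max(\rho,1)$: on $\{\rho\ge1\}$ the hypothesis $\nu\ge C$ lets the BD kinetic term control $\partial_x(\log r)$ in $L^\infty_tL^1_{loc}$, while $\log r\le\rho\le C(1+\rho\Phi(\bar\rho/\rho))$ puts $\log r$ itself in $L^\infty_tL^1_{loc}$; then $W^{1,1}_{loc}\hookrightarrow L^\infty$ bounds $\log r$ and hence $\rho$. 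Your sketch does not flag this isothermal obstacle and does not supply the logarithmic fix, so the upper-bound step is a genuine gap.

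A smaller index slip: the BD dissipation $\int\nu(\rho)\rho^{-2}|\rho_x|^2$ with $\nu\ge C\rho^q$ controls $\partial_x(\rho^{q/2})$ in $L^2_tL^2_x$, not $\partial_x(\rho^{(q-1)/2})$; it is the BD \emph{kinetic} term $\sqrt\rho\,\partial_x\varphi(\rho)\in L^\infty_tL^2_x$ that controls $\partial_x(\rho^{q-1/2})$ and yields the lower bound precisely when $q<1/2$.
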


We then present the existence results for the case when \(\m(\r)=b\r^\a\) with \(\a\in[1/2,1]\).
This case requires more regular initial data with a technical condition \eqref{scaling} as in \cite{CDNP,KV-Existence}.

\begin{theorem} \label{thm:KV-AP}
Assume \(\a\in[0,1]\).
Let \(\r_0\) and \(u_0\) be the initial data such that
\begin{align*}
&\r_0-\rbar \in H^k(\RR), \quad u_0-\ubar \in H^k(\RR), &&\text{for some integer } k\ge4, \\
&0<\kubar_0 \le \r_0(x) \le \kbar_0, \quad \forall x \in \RR, &&\text{for some constants } \kubar_0,\kbar_0,
\end{align*}
and
\begin{equation} \label{scaling}
\rd_x u_0(x) \le \r_0(x)^{\g-\a}=\r_0(x)^{1-\a}, \quad \forall x \in \RR,
\end{equation}
where \(\rbar\) and \(\ubar\) are the smooth monotone functions satisfying \eqref{sm-end-E}.
Then, there exists a global-in-time unique smooth solution \((\r,u)\) to \eqref{NS-Eulerian}-\eqref{pressure-Eulerian} such that for any \(T>0\),
\begin{align*}
&\r-\rbar \in L^\infty(0,T;H^k(\RR)), \\
&u-\ubar \in L^\infty(0,T;H^k(\RR)) \cap L^2(0,T;H^{k+1}(\RR)).
\end{align*}
Moreover, there exist constants \(\kubar(T)\) and \(\kbar(T)\) such that 
\[
0<\kubar(T) \le \r(t,x) \le \kbar(T), \quad \forall (t,x)\in[0,T]\times\RR.
\]
\end{theorem}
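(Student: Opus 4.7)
The strategy is to follow the effective-velocity / BD-entropy framework of Kang--Vasseur \cite{KV-Existence} and Constantin--Drivas--Nguyen--Pasqualotto \cite{CDNP}, specialized to the isothermal pressure $p(\r)=\r$, which corresponds to the endpoint $\g=1$ not covered in those works. The central device is the effective velocity $w:=u+\rd_x\varphi(\r)$ with $\varphi'(y)=\m(y)/y^2$, so that $\varphi(y)=y^{\a-1}/(\a-1)$ for $\a\ne 1$ and $\varphi(y)=\log y$ for $\a=1$; in these variables the Bresch--Desjardins entropy identity supplies $L^\infty_t L^2_x$ control of $\rd_x\varphi(\r)$, and the scaling assumption \eqref{scaling} is precisely the one-sided inequality whose propagation along particle paths will yield the density lower bound.

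The plan is to proceed as follows. First, I would construct approximate solutions by mollifying the initial data and replacing $\m(\r)$ with $\m(\r)+\d$ so that the system becomes uniformly parabolic, solving locally in time by a standard fixed-point scheme. Then, uniformly in $\d$, I would derive the basic energy estimate, the BD entropy estimate, and, most importantly, the propagation of the one-sided bound $\rd_x u(t,\cdot)\le \r(t,\cdot)^{1-\a}$ inherited from \eqref{scaling}, either through a maximum-principle argument on an evolution equation for $\rd_x u-\r^{1-\a}$ or equivalently through the transport structure of $w$. Feeding this one-sided bound into the continuity equation $\rd_t\r+u\rd_x\r+\r\rd_x u=0$ along characteristics yields a quantitative lower bound $\r(t,x)\ge\kubar(T)>0$ on $[0,T]$; a complementary upper bound $\r\le\kbar(T)$ follows from the $L^1_tL^\infty_x$ control of $\rd_x u$ extracted from the higher-order estimates. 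Once $\r$ is pinched between two positive constants on $[0,T]$, the system is uniformly parabolic in $u$, and I would close a standard $H^k$ energy hierarchy by differentiating \eqref{NS-Eulerian} up to $k$ times and using $H^1\hookrightarrow L^\infty$. These estimates are $\d$-independent, extend the solution globally in time, and survive the limit $\d\to 0$; uniqueness in this regularity class reduces to a routine $L^2$-contraction argument on the difference of two solutions.

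The main obstacle is the weakness of the basic energy at $\g=1$: the pressure potential is only $\r\log\r-\r+1$, so one loses the $L^\g$-type density control that was extensively exploited in the polytropic settings of \cite{KV-Existence,CDNP}. This loss must be compensated by leaning more heavily on the BD entropy together with the one-sided propagation coming from \eqref{scaling}, which in combination still furnish enough rigidity to quantify the lower bound on $\r$ in terms of the initial data and $T$. A secondary delicate point is the endpoint $\a=1$, where the characteristic argument produces only an exponential-in-$T$ lower bound on $\r$ rather than the polynomial one available for $\a<1$, but this remains sufficient on any finite time interval and does not affect the stated conclusion.
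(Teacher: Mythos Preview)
Your overall strategy is correct and matches the paper's (brief) proof outline, which also defers to \cite{KV-Existence} and \cite{CDNP}: regularize the viscosity, invoke the active-potential mechanism from \cite{CDNP} to propagate the one-sided bound \eqref{scaling} and obtain an $\e$-independent density lower bound, then close higher-order estimates. Two points of comparison are worth noting. First, the paper regularizes with $\m_\e(\r)=\m(\r)+\e\r^{1/4}$ rather than $\m(\r)+\d$; the point is that $\e\r^{1/4}$ satisfies the hypotheses \eqref{MV-lb}--\eqref{MV-ub} of Theorem~\ref{thm:MV} with $q=1/4<1/2$, so global approximate solutions are obtained immediately from the already-proved Theorem~\ref{thm:MV}, bypassing a separate local existence argument. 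Your constant-$\d$ regularization would also work but requires you to redo the local theory. Second, you conflate two distinct quantities under the single letter $w$: the BD effective velocity $u+\rd_x\varphi(\r)$, which enters the BD entropy identity and gives $L^\infty_tL^2_x$ control of $\rd_x\varphi(\r)$, versus the CDNP \emph{active potential} $w_\e=-p(\r_\e)+\m_\e(\r_\e)\rd_x u_\e$, which is the object whose sign is preserved by a maximum-principle argument and encodes \eqref{scaling}. These play different roles; your plan uses both correctly, but the exposition blurs them. Finally, for the density \emph{upper} bound you appeal to $L^1_tL^\infty_x$ control of $\rd_x u$ from the higher-order hierarchy, which is mildly circular; in the paper (see Proposition~\ref{prop:r-ub}) the upper bound comes directly from the BD entropy estimate via Sobolev embedding applied to $\log(\r\vee 1)$, before any $H^k$ estimates, and this cleaner route is also available in the isothermal case.
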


\begin{remark}
Note that the system \eqref{NS-Eulerian} is equivalent to \eqref{inveq} with \(\n=1\).
Thus, these two results above provide a class of global-in-time solutions smooth enough, in which a certain contraction property holds.
More precisely, these results define the following function space in which the solutions reside:
\begin{multline*}
\Xcal_T \coloneqq
\{(v,u) \mid 
v-\underline{v}, u-\underline{u} \in L^\infty(0,T;H^1(\RR)), u-\underline{u} \in L^2(0,T;H^2(\RR)),\\ 
v^{-1}, v \in L^\infty((0,T)\times\RR) \}
\end{multline*}
where \(\underline{v}\) and \(\underline{u}\) are smooth monotone functions such that
\begin{align} \label{sm-end}
\underline{v}(x)=v_{\pm}, \quad \underline{u}(x)=u_{\pm}, \quad \text{for } \pm x \ge 1.
\end{align}
\end{remark}

\subsection{Uniqueness and stability of Riemann shocks to Euler system}
To study the stability of Riemann shocks under the vanishing viscosity limit framework, it is natural to employ the viscous shocks, which is the viscous counterpart of Riemann shocks, of the associated Navier-Stokes systems.
It is known that the system \eqref{inveq} admits a viscous shock wave connecting two end states \((v_-,u_-)\) and \((v_+,u_+)\), provided that the two end states satisfy both the Rankine-Hugoniot condition and the Lax entropy condition (See \cite{MatsumuraWang10}):
\begin{align}
\begin{aligned} \label{end-con}
&\exists~\s \quad \text{s.t.}~\left\{
\begin{aligned}
&-\s(v_+-v_-) -(u_+-u_-) =0, \\
&-\s(u_+-u_-) +p(v_+)-p(v_-) =0,
\end{aligned} \right. \\
&\text{and either \(v_->v_+\) and \(u_->u_+\) or \(v_-<v_+\) and \(u_->u_+\) holds.}
\end{aligned}
\end{align}
In other words, for given constant states \((v_-,u_-)\) and \((v_+,u_+)\) satisfying \eqref{end-con}, there exists a viscous shock wave (\(\vtn, \utn\))(\(\x\)) with \(\x = x - \s t\) as a solution of
\begin{align}
\left\{
\begin{aligned} \label{shock_0}
    &-\s (\vtn)'-(\utn)' = 0, \quad ' = \frac{d}{d\x}, \\
    &-\s (\utn)'+p(\vtn)' = \nu\left(\m \frac{(\utn)'}{\vtn}\right)'.
\end{aligned}
\right.
\end{align}
If \(v_->v_+\), (\(\vtn, \utn\))(\(x-\s t\)) is a \(1\)-shock wave with \(\s=- \sqrt{-\frac{p_+-p_-}{v_+-v_-}}\), where \(p_\pm \coloneqq p(v_\pm)\).
On the contrary, if \(v_-<v_+\), it is a \(2\)-shock wave with \(\s= \sqrt{-\frac{p_+-p_-}{v_+-v_-}}\).

Let (\(\vbar,\ubar\)) be an associated entropic (inviscid) shock wave (or Riemann shock) connecting the two end states \((v_-,u_-)\) and \((v_+,u_+)\) satisfying \eqref{end-con} as follows:
\begin{equation} \label{shock-0}
(\vbar,\ubar)(x-\s t)= \left\{
\begin{aligned}
(v_-,u_-) \quad \text{if } x-\s t<0, \\
(v_+,u_+) \quad \text{if } x-\s t>0.
\end{aligned} \right.
\end{equation}

To study stability of shocks, we use the relative entropy associated to the entropy of \eqref{Euler} as follows: for any positive functions \(v_1,v_2\) and any functions \(u_1, u_2\), the relative entropy is given by
\begin{equation} \label{eta_def}
\et((v_1,u_1)|(v_2,u_2)) \coloneqq \Phif{v_1}{v_2}+\frac{1}{2}(u_1-u_2)^2
\end{equation}
where \(\Phi(z) \coloneqq z-1-\log z\).
Note that \(\Phi(z_1/z_2)\) is the relative functional associated with the strictly convex function \(Q(z)\coloneqq-\log z\).
Namely,
\[
Q(z_1|z_2)
\coloneqq Q(z_1)-Q(z_2)-Q'(z_2)(z_1-z_2)
= \Phif{z_1}{z_2}.
\]
Unlike the isentropic case, the isothermal case permits the consideration of physical entropy, which closely resembles the form of entropy for the full Euler system.
Naturally, the relative entropy also possesses a similar structure.
We refer to \cite{EEK-BNSF} for the relative entropy of the full Euler system.

We will consider limits of solutions to the Navier-Stokes system for the first component \(z_1\), i.e., \(v_1\). Since we only obtain uniform bounds in \(L^1\) for the solutions, the limits could be regarded as measures on \(\RR^+\times\RR\). This could be physical, as a possible appearance of vacuum.
Hence, we need to extend the notion of relative entropy to measures defined on \(\RR^+ \times \RR\).
Since we handle perturbations of a Riemann shock, it is enough to extend the definition only for the case when we compare a measure \(dv\) with a simple function \(\vbar\) which takes only two values \(v_-\) and \(v_+\).
Let \(v_a\) be the Radon-Nikodym derivative of \(dv\) with respect to the Lebesgue measure and \(dv_s\) be its singular part, i.e., \(dv=v_a dtdx+dv_s\).
Then, we define the relative functional as 
\begin{align*}
d\Phif{v}{\vbar} \coloneqq \Phif{v_a}{\vbar}dtdx + \frac{1}{\VBar(t, x)} dv_s(t,x),
\end{align*}
where \(\VBar\) is given by
\[
\VBar(t,x)=
\begin{cases}
\max(v_-,v_+) & \text{for } (t,x)\in \overline{\O_M} (\eqqcolon \text{the closure of }\O_M), \\
\min(v_-,v_+) & \text{for } (t,x)\notin \overline{\O_M},
\end{cases}
\]
and \(\O_M=\{(t,x)|\vbar(t,x)=\max(v_-,v_+)\}\).
We use \(\VBar\), not \(\vbar\), because it has to be defined at every point to deal with \(dv_s\).
Notice that in this case, the relative entropy is a measure itself.
Furthermore, if \(v\in L^{\infty}(\RR^+;L^\infty(\RR)+\Mcal(\RR))\) and its Radon-Nikodym derivative \(v_a\) is away from \(0\), then \(d\Phi(v/\vbar)\) does belong to \(L^{\infty}(\RR^+;\Mcal(\RR))\)
, where \(\Mcal\) denotes the space of nonnegative Radon measures.

The following theorem is on stability and uniqueness of the entropy shocks to \eqref{Euler}:
\begin{theorem}\label{thm_inviscid}
Let \(\a\in[0,1]\) be any constant.
For each \(\n>0\), consider the system \eqref{inveq}-\eqref{pressure}.
For a given constant state \((v_-, u_-)\in \RR^+\times \RR\), there exists a constant \(\e_0>0\) such that for any \(\e<\e_0\) and any \((v_+, u_+)\in \RR^+\times \RR\) satisfying \eqref{end-con} with \(\abs{p(v_+)-p(v_-)} = \e\), the following statement holds. \\
Let \((\vtn, \utn)\) be a viscous shock connecting the two end states \((v_-, u_-)\) and \((v_+, u_+)\) as a solution of \eqref{shock_0}.
Then for a given initial datum \(U^0=(v^0, u^0)\) of \eqref{Euler} satisfying 
\[
\Ecal_0 \coloneqq \int_\RR \et((v^0, u^0)|(\vbar, \ubar))dx < \infty,
\]
the followings hold. \\
(i) (Well-prepared initial data) There exists a sequence of smooth functions \(\{(v_0^\nu, u_0^\nu)\}_{\nu>0}\) on \(\RR\) such that
\begin{equation} \label{ini_conv}
\begin{aligned}
&\lim_{\n\to 0}v_0^\nu = v^0, \quad \lim_{\n\to 0}u_0^\nu = u^0 \quad a.e., \quad v_0^\nu > 0, \\
&\lim_{\n\to 0}\int_\RR \Phif{v_0^\nu}{\vtn} + \frac{1}{2}\Big(u_0^\nu-\n \frac{(v_0^\nu)_x}{(v_0^\nu)^{\a+1}}- \utn+\n\frac{(\vtn)_x}{(\vtn)^{\a+1}}\Big)^2 dx = \Ecal_0.
\end{aligned}
\end{equation}
(ii) For any given \(T>0\), let \(\{(\vn, \un)\}_{\n>0}\) be a sequence of solutions in \(\Xcal_T\) to \eqref{inveq} with the initial datum \((v_0^\nu, u_0^\nu)\) as above. 
Then there exist limits \(v_\infty\) and \(u_\infty\) such that as \(\n\to 0\) (up to a subsequence),
\begin{equation}\label{wconv}
\vn\wc v_\infty, \quad \un\wc u_\infty ~\text{ in }~ \Mcal_{loc}((0, T)\times \RR),
\end{equation}
where \(v_\infty\) lies in \(L^\infty(0, T, L^\infty(\RR)+\Mcal(\RR))\) and \(\Mcal_{loc}((0, T)\times \RR)\) is the space of locally bounded Radon measures. \\
In addition, there exist a shift \(X_\infty\in BV((0, T))\) and a constant \(C>0\) such that \(d\Phi(v_\infty/\vbar)\in L^\infty(0, T;\Mcal(\RR))\), and for almost every \(t\in (0, T)\), 
\begin{equation} \label{uni-est}
\int_{x\in\RR} d\Phi(v_\infty/\vbar(x-X_\infty(\cdot)))(t) 
+\int_{\RR} \frac{\abs{u_\infty(t,x) - \ubar(x-X_\infty(t))}^2}{2}dx \le C\Ecal_0.
\end{equation}
Moreover, the shift \(X_\infty\) satisfies 
\begin{equation}\label{X-control}
\abs{X_\infty(t)-\s t} \le \frac{C(T)}{\abs{v_--v_+}}(\sqrt{\Ecal_0}+\Ecal_0).
\end{equation}
Therefore, entropy shocks \eqref{shock-0} with small amplitudes of the isothermal Euler system \eqref{Euler} are stable (up to shifts) and unique in the class of inviscid limits of solutions to the Navier-Stokes system \eqref{inveq}. 
\end{theorem}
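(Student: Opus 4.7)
\medskip
\noindent\textbf{Proof proposal.} The plan is to combine the global existence result (the function class $\mathcal{X}_T$ from Theorems 1.1-1.2, suitably rescaled in $\nu$) with a uniform-in-$\nu$ contraction estimate relative to a shifted viscous shock, and then pass to the inviscid limit using weak compactness and the convexity of the relative entropy $\Phi$. The key technical device will be the $a$-contraction with shifts, applied in the Lagrangian variables of \eqref{inveq}.

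\medskip
\noindent\emph{Step 1 (Well-prepared data).} To produce the sequence $(v_0^\nu,u_0^\nu)$ in part (i), I would mollify $v^0$ and $u^0$ at a scale $\delta(\nu)\downarrow 0$, truncate so that $v_0^\nu$ stays bounded away from $0$, and add a BD-type correction to the momentum so that the quantity $u_0^\nu-\nu(v_0^\nu)_x/(v_0^\nu)^{\alpha+1}-\tilde u^\nu+\nu(\tilde v^\nu)_x/(\tilde v^\nu)^{\alpha+1}$ approximates $u^0-\bar u$ in $L^2$. Dominated convergence together with the convexity bounds on $\Phi$ then yields \eqref{ini_conv}.

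\medskip
\noindent\emph{Step 2 (Uniform relative entropy estimate).} For each $\nu>0$, Theorems 1.1-1.2 (after the standard scaling $(t,x)\mapsto(\nu t,\nu x)$ that turns $\nu=1$ into general $\nu$) give a global strong solution $(v^\nu,u^\nu)\in\mathcal{X}_T$. I would then construct an absolutely continuous shift $X^\nu(t)$, solving an ODE of the form $\dot X^\nu=\mathcal{F}(v^\nu,u^\nu;\tilde v^\nu,\tilde u^\nu)$ chosen so that the weighted BD-modulated relative entropy
\begin{equation*}
\mathcal{E}^\nu(t)\coloneqq\int_{\RR}\Phi\!\left(\frac{v^\nu}{\tilde v^\nu(\cdot-X^\nu)}\right)+\frac{1}{2}\Bigl(u^\nu-\nu\tfrac{v^\nu_x}{(v^\nu)^{\alpha+1}}-\tilde u^\nu(\cdot-X^\nu)+\nu\tfrac{\tilde v^\nu_x(\cdot-X^\nu)}{\tilde v^\nu(\cdot-X^\nu)^{\alpha+1}}\Bigr)^{\!2}dx
\end{equation*}
satisfies $\tfrac{d}{dt}\mathcal{E}^\nu\le 0$. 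This is the contraction property announced in the abstract and is the main ingredient; granted it, one obtains $\mathcal{E}^\nu(t)\le\mathcal{E}^\nu(0)\to\mathcal{E}_0$ uniformly in $\nu$, together with a $\nu$-uniform $L^1_tL^1_x$-bound on the dissipation. The BV bound on $X^\nu$ (and hence \eqref{X-control}) follows from integrating the ODE for $\dot X^\nu$ and using that its right-hand side is controlled by the square root of the dissipation, as in Kang-Vasseur \cite{KV-Inven}.

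\medskip
\noindent\emph{Step 3 (Compactness and passage to the limit).} From the uniform $L^\infty_tL^1_x$ bound on $\Phi(v^\nu/\tilde v^\nu)$ and the $L^2_{t,x}$ bound on $u^\nu-\tilde u^\nu$, the families $\{v^\nu\}$ and $\{u^\nu\}$ are uniformly bounded in $L^\infty(0,T;L^\infty+\mathcal{M})_{\mathrm{loc}}$; extracting a subsequence gives the weak-$*$ convergences \eqref{wconv} to $(v_\infty,u_\infty)$. Helly's theorem applied to the uniformly BV shifts $X^\nu$ produces the limit $X_\infty\in BV(0,T)$ satisfying \eqref{X-control}. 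For the stability bound \eqref{uni-est}, the plan is to split $v^\nu$ into absolutely continuous and (approximate) singular parts and use lower-semicontinuity of the convex functional $v\mapsto\int\Phi(v/\bar v(\cdot-X_\infty))$ extended to measures via the recession function $z\mapsto z\Phi(\infty)=z\cdot 1$, which explains the weight $1/\bar V$ in the definition of $d\Phi(v/\bar v)$; the BD-correction term $\nu v^\nu_x/(v^\nu)^{\alpha+1}$ vanishes in the limit because its $L^2$ norm is $O(\sqrt\nu)$ times the square root of the dissipation. This yields \eqref{uni-est}.

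\medskip
\noindent\emph{Main obstacle.} The genuinely difficult step is the contraction estimate of Step~2 for the isothermal pressure $p(v)=1/v$ with possibly degenerate viscosity $\mu(v)=v^{-\alpha}$. Unlike the isentropic case with $\gamma>1$ treated in \cite{KV-Inven,KV-JDE}, here one cannot rely on a uniform $L^\infty$ lower bound on the pressure to absorb bad terms, and the logarithmic relative pressure $\Phi(v_1/v_2)=v_1/v_2-1-\log(v_1/v_2)$ has weaker coercivity. Consequently, the quadratic-form analysis in the neighborhood of the shock must be redone, and the choice of weight $a$ and of the shift velocity $\dot X^\nu$ must be tuned so that the cross term generated by the BD correction is dominated by the good dissipation, uniformly in $\alpha\in[0,1]$. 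This is where most of the paper's work is expected to lie.
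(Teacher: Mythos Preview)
Your overall strategy matches the paper's: well-prepared data via mollification, the $a$-contraction with shifts (Theorem~\ref{thm_main}) applied after the scaling $(t,x)\mapsto(\nu t,\nu x)$ to get a $\nu$-uniform relative entropy bound, then weak compactness plus lower semicontinuity of the convex functional for \eqref{uni-est}, and BV compactness (Helly) for the shifts. Your identification of the main obstacle---the contraction estimate in the isothermal setting with degenerate viscosity---is also correct and is indeed where the bulk of the paper's work lies (Sections~3--4).

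There is, however, a genuine gap in your derivation of \eqref{X-control}. You write that the BV bound on $X^\nu$ ``(and hence \eqref{X-control})'' follows from integrating the shift ODE, and later that Helly's theorem ``produces the limit $X_\infty\in BV(0,T)$ satisfying \eqref{X-control}.'' But integrating the ODE \eqref{X-def} only yields $|X_\nu(t)|\le C(T)(\mathcal{E}_0+1)$, since $\dot X_\nu$ is \emph{not} of the form $\sigma+\text{(small)}$: it is $\Phi_\varepsilon(Y)(2|\mathcal{J}^{bad}|+2|\mathcal{J}^{para}|+1)$, which can be of order $\varepsilon^{-2}$ pointwise. This gives compactness of $\{X_\nu\}$ (Lemma~\ref{lem-X}) but says nothing about $|X_\infty(t)-\sigma t|$. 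The quantitative estimate \eqref{X-control}, with its telltale factor $|v_--v_+|^{-1}$, requires a separate argument: one tests the conservation law $v^\nu_t-h^\nu_x=\nu(\cdots)_x$ against a spatial cutoff $\psi$ supported on $[-2r,2r]$ and a temporal mollifier, then decomposes the resulting identity so that one piece equals $(X_\nu(t)-\sigma t)(v_--v_+)$ up to $O(\nu)$ errors, while all other pieces are bounded by $C(T)(\sqrt{\mathcal{E}_0}+\mathcal{E}_0)$ via the uniform relative-entropy estimate \eqref{ineq-m}. This is the content of the paper's Section~5 (following \cite{EEK-BNSF}), and it is not recoverable from the shift ODE alone.
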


\begin{remark}
    (1) Theorem \ref{thm_inviscid} provides the uniqueness and stability of weak Riemann shocks in the class of inviscid limits of solutions to the isothermal Navier-Stokes systems. For the uniqueness, if \(\Ecal_0 = 0\), then \eqref{X-control} implies that \(X_\infty(t) = \s t\) for a.e. \(t \in (0,T)\). Consequently, from \eqref{uni-est}, we obtain
    \[
        \int_{x\in\RR} d\Phi(v_\infty(t,x)/\vbar(x-\s t))dx 
        +\int_{\RR} \frac{\abs{u_\infty(t,x) - \ubar(x-\s t)}^2}{2}dx =0,
    \]
    where \(d v_\infty = v_a dt dx + dv_s\), and the singular part \(v_s\) vanishes. Thus, we have
    \[
    v_\infty(t,x) = \vbar(x - \s t), \quad u_\infty(t,x) = \ubar(x - \s t), \quad \text{a.e. } (t,x) \in [0,T] \times \RR.
    \]
    (2) The smallness of shock amplitude does not play any role in the inviscid limit process. The restriction on its size arises from Theorem \ref{thm_main}.
\end{remark}


The cornerstone in the proof of Theorem \ref{thm_inviscid} is a certain uniform-in-\(\n\) estimate for any large perturbations of viscous shocks to \eqref{inveq}.
It can be obtained from a contraction property of any large perturbations of viscous shocks to \eqref{inveq} with a fixed \(\n=1\):

\begin{align}
\left\{
\begin{aligned} \label{main0}
    &v_t-u_x = 0, \\
    &u_t+p(v)_x = \left(\m(v) \frac{u_x}{v}\right)_x.
\end{aligned} \right.
\end{align}
As in \cite{KV21,KV-Inven}, we first introduce the following relative functional \(E(\cdot|\cdot)\) to measure the contraction: for any positive functions \(v_1,v_2,\) and any functions \(u_1,u_2,\) 
\begin{equation} \label{E-rel-ent}
E((v_1,u_1)|(v_2,u_2)) \coloneqq \Phif{v_1}{v_2}
+ \frac{1}{2} \Big(u_1-\frac{(v_1)_x}{(v_1)^{\a+1}} -u_2+\frac{(v_2)_x}{(v_2)^{\a+1}} \Big)^2.
\end{equation}
The functional \(E\) is associated to the BD entropy (see Bresch-Desjardins \cite{BD02, BD03, BD06, BDL03}).
Since \(\Phi(\cdot/\cdot)\) is positive definite, so is \eqref{E-rel-ent}, i.e., it holds that \(E((v_1,u_1))|((v_1,u_2)) \ge 0\), and 
\[
E((v_1,u_1)|(v_1,u_2)) = 0 ~\text{ a.e.} \quad \Leftrightarrow \quad  (v_1,u_1)=(v_2,u_2) ~\text{ a.e.}
\]

The following theorem provides a contraction property which is measured by the relative functional \eqref{E-rel-ent}.
\begin{theorem}\label{thm_main}
Let \(\a\in[0,1]\) be any constant.
Consider the system \eqref{main0} with \eqref{pressure}. 
For a given constant state \((v_-,u_-)\in\RR^+\times\RR\), 
there exist constants \(\e_0, \d_0>0\) such that the following statement holds.\\
For any \(\e<\e_0\), \(\d_0^{-1}\e<\l<\d_0\), and any \((v_+,u_+)\in\RR^+\times\RR\) satisfying \eqref{end-con} with \(\abs{p(v_+)-p(v_-)} = \e\), there exists a smooth monotone function \(a\colon \RR\to \RR^+\) with \(\lim_{x\to\pm\infty} a(x) = 1+a_{\pm}\) for some constants \(a_-\) and \(a_+\) with \(\abs{a_+-a_-}=\l\) such that the followings hold. \\
Let \(\Ut\coloneqq (\vt,\ut)\) be the viscous shock connecting \((v_-,u_-)\) and \((v_+,u_+)\) as a solution of \eqref{shock_0} with \(\n=1\).
For a given \(T>0\), let \(U\coloneqq (v,u)\) be a solution in \(\Xcal_T\) to \eqref{main0} with a initial datum \(U_0\coloneqq (v_0,u_0)\) satisfying \(\int_\RR E(U_0|\Ut) dx<\infty\). 
Then there exist a shift \(X\in W^{1,1}(0,T)\) and a constant \(C>0\) (independent on \(\e,\l\) and \(T\)) such that 
\begin{align*}
&\int_\RR a(x) E\big(U(t,x+X(t))| \Ut(x)\big) dx \\
&\qquad +\d_0\frac{\e}{\l} \int_0^T \int_\RR  \abs{\s a'(x)} \Phi\left(v(s,x+X(s))/\vt(x)\right) dx ds \\
&\qquad +\d_0 \int_0^T \int_\RR a(x)
v^{1-\a}(s, x+X(s))\abs{\rd_x\Big(p(v(s, x+X(s)))-p(\vt(x))\Big)}^2 dx ds \\
&\le \int_\RR a(x) E\big(U_0(x)|\Ut(x)\big) dx,
\end{align*}
and 
\begin{align}
\begin{aligned} \label{est-shift}
&\abs{\dot{X}(t)}\le \frac{C}{\e^2}\left(f(t)+1 \right), \quad \text{for a.e. } t\in[0,T], \\
&\text{for some positive function \(f\) satisfying} \quad \norm{f}_{L^1(0,T)} \le\frac{\l}{\d_0\e}\int_\RR E(U_0|\Ut) dx.
\end{aligned}
\end{align}
\end{theorem}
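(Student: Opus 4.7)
The plan is to adapt the weighted relative entropy method of Kang--Vasseur \cite{KV-Inven,KV21} to the isothermal pressure $p(v)=1/v$ and the (possibly degenerate) viscosity $\mu(v)=v^{-\alpha}$ with $\alpha\in[0,1]$. The first step is to pass to the Bresch--Desjardins effective velocity
\[
h := u - \frac{v_x}{v^{\alpha+1}},
\]
which reformulates \eqref{main0} into a system where $v$ satisfies a quasilinear parabolic equation ($v_t-h_x=-(v^{1-\alpha}p(v)_x)_x$ after using $v_x/v^{\alpha+1}=-v^{1-\alpha}p(v)_x$) and $h$ satisfies an equation without explicit diffusion in $h$ ($h_t+p(v)_x=0$). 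In these variables the functional \eqref{E-rel-ent} becomes $E(U|\Ut)=\Phi(v/\vt)+\tfrac12(h-\htil)^2$, and the viscous shock $(\vt,\ut)$ is represented by its BD profile $(\vt,\htil)$.

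Next I would construct the weight $a$ as a smooth monotone interpolation between $1+a_-$ and $1+a_+$ concentrated on the shock layer, with $|a_+-a_-|=\lambda$ and with the sign of the monotonicity chosen so that $\sigma a'\le 0$; this ensures that the wave evolves against the weight gradient and produces the positive hyperbolic term $\int|\sigma a'|\Phi(v/\vt)\,dx$ on the left-hand side. The shift $X(t)$ is then defined by an ODE of the form
\[
\dot X(t) = -\frac{M}{\varepsilon^{2}}\int_\RR a'(x)\,\bigl[\text{linear functional of }(v-\vt,\,h-\htil)\bigr]\,dx,
\]
chosen to annihilate the contribution that is linear in the perturbation on the shock layer when one differentiates $\int a\,E\,dx$ in time. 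The estimate \eqref{est-shift} on $|\dot X|$ then follows by Cauchy--Schwarz against $a'$ together with the integrability of $E$, thereby guaranteeing $X\in W^{1,1}(0,T)$ and the explicit bound on $\|f\|_{L^1}$.

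The heart of the argument is the time differentiation of $\int a(x)\,E(U(t,x+X(t))|\Ut(x))\,dx$. Using the BD structure, the derivative decomposes as (i) a hyperbolic quadratic form concentrated on $\{a'\neq 0\}$ of the form $\int|\sigma a'|\,Q(v-\vt,\,h-\htil)\,dx$; (ii) the parabolic dissipation $\int a\,v^{1-\alpha}|\partial_x(p(v)-p(\vt))|^2\,dx$ coming from writing the $v$-equation in terms of $p(v)$ and testing against $p(v)-p(\vt)$; (iii) terms proportional to $\dot X$ that are cancelled by the choice of shift modulo errors absorbable into (i) and (ii); and (iv) cubic remainders arising from Taylor expansion of $\Phi(\cdot/\cdot)$ and of $p$. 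A sharp pointwise algebraic lemma in the spirit of \cite{KV-Inven} shows that (i) is positive definite up to an absorbable fraction of $\int|\sigma a'|\Phi(v/\vt)\,dx$, provided $\varepsilon/\lambda\ll 1$ and $\lambda\ll 1$; this is what yields the $\delta_0\,\varepsilon/\lambda$ prefactor on the left-hand side of the stated inequality.

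The main obstacle is closing the estimate for \emph{arbitrarily large} perturbations of the viscous shock, not merely small ones. In the region where $|v-\vt|$ is small one controls the cubic remainders in (iv) by the small-perturbation algebra together with (i) and (ii); in the large-perturbation region one must instead invoke the BD dissipation $\int a\,v^{1-\alpha}|\partial_x(p(v)-p(\vt))|^2\,dx$ via a weighted nonlinear Poincar\'e-type inequality for $p(v)-p(\vt)$. Here the isothermal choice $p(v)=1/v$ is exactly what makes the argument close uniformly in $\alpha\in[0,1]$: $\Phi(v/\vt)$ grows only linearly at large $v$ and like $1/v$ near zero, matching the degeneracy of $v^{1-\alpha}|\partial_x p(v)|^2$. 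Establishing this weighted Poincar\'e inequality uniformly in $\alpha$ and in the amplitude of the perturbation, and then checking that the cubic remainders are indeed dominated by (i)+(ii) in both regimes after the $\varepsilon,\lambda$ smallness is invoked, is the hardest technical step; the constants $\varepsilon_0,\delta_0$ in the statement are determined by this balance.
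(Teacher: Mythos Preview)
Your overall framework---the BD effective velocity $h=u-v_x/v^{\alpha+1}$, the weighted relative entropy in the $(v,h)$ variables, and the shift-plus-weight mechanism---matches the paper. However, two of the key technical mechanisms are mischaracterized in ways that would prevent the argument from closing.

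First, the shift is not defined so as to ``annihilate the linear contribution.'' The paper sets
\[
\dot X = \Phi_\e\bigl(Y(U^X)\bigr)\,\bigl(2|\Jcal^{bad}|+2|\Jcal^{para}|+1\bigr),
\]
where $\Phi_\e$ is the odd saturated function equal to $-y/\e^4$ on $[-\e^2,\e^2]$ and $\mp 1/\e^2$ outside. This produces a dichotomy: when $|Y|\ge\e^2$ one gets $\dot X\,Y\le -2|\Jcal^{bad}|-2|\Jcal^{para}|$, which absorbs \emph{all} bad terms directly; only on $\{|Y|\le\e^2\}$ must one prove the sharp inequality (Proposition~\ref{prop:main}), and the constraint $|Y|\le\e^2$ is itself what supplies the a priori bound \eqref{locE} needed there. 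Your linear-cancellation ODE does not generate this dichotomy, and nothing would control the bad terms on the set where $|Y|$ is large. (Also, the sign convention is $\sigma_\e a'>0$, not $\le 0$.)

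Second, you have inverted the roles of the Poincar\'e inequality and the pointwise/truncation arguments. There is no ``sharp pointwise algebraic lemma'' making the hyperbolic quadratic form positive; that form is \emph{not} pointwise positive definite. The paper instead (a) truncates $p(v)-p(\vt)$ at level $\delta_3$; (b) on the truncated (hence small) part, maximizes out $h-\htil$ (Lemma~\ref{lem-max}), rescales to the variable $W=\tfrac{\lambda}{\e}(p(\bar v)-p(\vt))$ on $y\in(0,1)$, and applies the \emph{integrated} nonlinear Poincar\'e inequality of Proposition~\ref{prop_nl_Poincare}---this is Proposition~\ref{prop:main3}, and it is where the $\e/\lambda$ and $\lambda$ smallness are actually spent; (c) the large-perturbation remainder $\{|p(v)-p(\vt)|>\delta_3\}$ is handled not by Poincar\'e but by the pointwise estimates of Lemma~\ref{lem-pw} combined with Lemma~\ref{lemma_pushing}, which feed the tails back into $\Dcal(U)$ and $\Gcal_v(U)$. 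Getting this order right is essential: the Poincar\'e step requires the $L^2$ bound $\int_0^1 W^2\le C_1$, which only follows once one is restricted to $|Y|\le\e^2$ and to the truncated variable.
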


\section{Proof of Theorem \ref{thm:MV} and \ref{thm:KV-AP}: Existence} \label{sec:existence}
\setcounter{equation}{0}
In this section, we prove the existence results by following the arguments for the barotropic cases which can be found in \cite{MVSIMA} and \cite{KV-Existence}.
Note that in \cite{MVSIMA}, the proof consists of four components: an a priori estimate based on the relative entropy;  an a priori estimate based on the Bresch-Desjardins entropy, which is motivated by \cite{BD02, BD03, BD06, BDL03}; upper and lower bounds for the density; standard arguments for parabolic equations.
Considering that the structure of the relative entropy differs from that in the barotropic case, we provide a detailed proof of the first component.
However, the second component heavily depends on the form of the Navier-Stokes system and thus \eqref{NS-Eulerian} can be regarded as a special case of the barotropic case with \(\g=1\), and so we omit the details.
For the third, Mellet-Vasseur \cite{MVSIMA} used Sobolev embedding on \(\r^{\frac{\g-1}{2}}\) to obtain the upper bound, but in our case, this quantity reduces to \(1\).
Thus, we use a logarithmic function rather than \(\r^{\frac{\g-1}{2}}\), as discussed below.
For the last one, there are no significant differences from the barotropic case, so we omit it.

Now we introduce the relative entropy associated with \eqref{NS-Eulerian} and \eqref{pressure-Eulerian}.
The entropy pair \((\et,G)\) (which again is a notation abuse but with no confusion) is given by
\[
\et(U) = \frac{1}{2} \r u^2 + \r \log \r, \qquad
G(U) = \frac{1}{2} \r u^3 + \r u \log \r + \r u,
\]
and thus the relative entropy is defined for any solutions \(U\) and \(\Util\) by
\begin{align*}
\et(U|\Util) &\coloneqq \et(U) -\et(\Util) -\nabla \et(\Util)(U-\Util) 
= \frac{1}{2}\r (u-\util)^2 + \r \Phif{\rhotil}{\r},
\end{align*}
where \(\Phi(z)= z-1-\log z\).
Recall \(\rbar\) and \(\ubar\) in \eqref{sm-end-E}, and we denote 
\(\Ubar = \begin{pmatrix} \rbar \\ \rbar \ubar \end{pmatrix}\).
It is easy to see that there exists a constant \(C_1>0\) (depending only on \(\inf \rbar\)) such that for every \(\r>0\) and for every \(x\in\RR\), we have
\begin{equation} \label{r=0}
\r \le C_1 (1+\r\Phif{\rbar}{\r}), \qquad 
\liminf_{\r\to0}\r\Phif{\rbar}{\r} \ge C_1^{-1}.
\end{equation}

Then, we present the following lemma, which provides an a priori estimate.
\begin{lemma} \label{lem:uni-est1}
Let \((\r,u)\) be a solution to \eqref{NS-Eulerian}-\eqref{pressure-Eulerian} with \(\m=\n\) satisfying \eqref{MV-ub}.
Assume that the solution \((\r,u)\) satisfies the entropy inequality
\begin{equation} \label{ent-ineq}
\rd_t \et(U) + \rd_x (G(U)-\n(\r) u u_x) + \n(\r) \abs{u_x}^2 \le 0.
\end{equation}
Assume that the initial data \((\r_0,u_0)\) satisfies
\begin{equation} \label{ini-finite}
\int_\RR \et(U_0|\Ubar) dx
= \int_\RR \bigg( \frac{1}{2}\r_0 (u_0-\ubar)^2 + \r_0 \Phif{\rbar}{\r_0} \bigg) dx < +\infty.
\end{equation}
Then, for every \(T>0\), there exists a constant \(C(T)>0\) such that
\begin{equation} \label{uni-est1}
\sup_{[0,T]} \int_\RR \bigg(\frac{1}{2}\r (u-\ubar)^2 + \r \Phif{\rbar}{\r}\bigg) dx
+\int_0^T \int_\RR \n(\r) \abs{u_x}^2 dx dt \le C(T).
\end{equation}
The constant \(C(T)\) depends only on \(T>0\), \(\Ubar\), the initial value \(U_0\), and the constant \(C>0\) appearing in \eqref{MV-ub}.
\end{lemma}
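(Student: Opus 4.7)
The plan is to derive a differential inequality of the form
\[
\frac{d}{dt} \int_\RR \et(U|\Ubar)\,dx + \frac{1}{2}\int_\RR \n(\r)|u_x|^2\,dx \le C + C \int_\RR \et(U|\Ubar)\,dx,
\]
from which \eqref{uni-est1} follows by Gr\"onwall's inequality together with the initial bound \eqref{ini-finite}.

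Since $\Ubar$ is $t$-independent, one has $\rd_t\et(U|\Ubar) = \rd_t\et(U) - \nabla\et(\Ubar)\cdot\rd_t U$. I would combine the entropy inequality \eqref{ent-ineq} with the Navier-Stokes system \eqref{NS-Eulerian} to replace $\rd_t U$ by the spatial divergence $-\rd_x(\r u,\ \r u^2+p(\r))^T + (0,(\n(\r)u_x)_x)^T$ and then integrate by parts. Using the explicit form $\nabla\et(\Ubar) = (-\ubar^2/2+\log\rbar+1,\,\ubar)^T$, together with the facts that $U\to\Ubar_\pm$ and $u_x\to 0$ at $x=\pm\infty$, the boundary flux contributions collapse into a constant depending only on $\Ubar_\pm$. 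Since $\rbar_x$ and $\ubar_x$ vanish outside $[-1,1]$, all remaining volume source terms are supported on this compact interval, producing
\[
\frac{d}{dt}\int_\RR \et(U|\Ubar)\,dx + \int_\RR \n(\r)|u_x|^2\,dx \le C + I + II,
\]
where $I$ collects the source terms involving $\rbar_x$ or $\ubar_x$ tested against $(\r u,\ \r u^2+p(\r))$, and $II = \int_\RR \ubar_x\,\n(\r) u_x\,dx$.

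To control $I$, I would invoke the key algebraic fact \eqref{r=0}, which yields $\int_{|x|\le 1}\r\,dx \le C + C\int_\RR\et(U|\Ubar)\,dx$, and combine it with the elementary bound $\int_\RR \r(u-\ubar)^2\,dx \le 2\int_\RR\et(U|\Ubar)\,dx$, the splitting $\r u^2 = \r(u-\ubar)^2 + 2\r\ubar(u-\ubar)+\r\ubar^2$, and Cauchy-Schwarz; this gives $|I| \le C + C\int\et(U|\Ubar)\,dx$. For $II$, Young's inequality yields
\[
|II| \le \frac{1}{2}\int_\RR \n(\r)|u_x|^2\,dx + \frac{1}{2}\int_{|x|\le 1} \n(\r)|\ubar_x|^2\,dx,
\]
and the linear upper bound \eqref{MV-ub} together with the compact-set bound on $\int\r\,dx$ gives $\int_{|x|\le 1}\n(\r)|\ubar_x|^2\,dx \le C + C\int\et(U|\Ubar)\,dx$. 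Absorbing $\tfrac{1}{2}\int \n(\r)|u_x|^2\,dx$ into the left-hand side produces the target differential inequality, and Gr\"onwall together with one time integration yields \eqref{uni-est1}.

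The main technical obstacle is the viscous source $II$: since the only nonnegative dissipation term sitting on the left is $\int\n(\r)|u_x|^2\,dx$, half of it must be sacrificed to absorb $II$, and the leftover $\int\n(\r)|\ubar_x|^2\,dx$ must then be controlled by the relative entropy itself. This is precisely the role of hypothesis \eqref{MV-ub}: if $\n(\r)$ grew faster than linearly in $\r$, the compact-set integral $\int_{|x|\le 1}\n(\r)|\ubar_x|^2\,dx$ would not be controlled by \eqref{r=0}, and the argument would not close at this stage.
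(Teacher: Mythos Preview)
Your proposal is correct and follows essentially the same route as the paper: derive a Gr\"onwall inequality for $\int_\RR \et(U|\Ubar)\,dx$, split off the viscous cross term $\int \ubar_x\,\n(\r)u_x\,dx$ via Young's inequality, and close using \eqref{MV-ub} together with the sublinear bound \eqref{r=0} on the compact support of $\rd_x\Ubar$. The only notable presentational difference is that the paper organizes the hyperbolic source terms through the relative flux $A(U|\Ubar)=(0,\r(u-\ubar)^2)^T$ and the identity $|A(U|\Ubar)|\le \et(U|\Ubar)$, together with a separate linear term $\nabla^2\et(\Ubar)\,\rd_x A(\Ubar)(U-\Ubar)$, whereas you bound the full flux $A(U)$ directly against $\rd_x\nabla\et(\Ubar)$ using the splitting $\r u^2=\r(u-\ubar)^2+2\r\ubar(u-\ubar)+\r\ubar^2$; both lead to the same estimate.
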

Thanks to the locally quadratic property of the relative entropy, \eqref{ini-finite} holds under the assumptions of Theorem \ref{thm:MV}.

\begin{proof}
First of all, we observe (by for instance \cite{Dafermos79-ARMA}) that
\begin{align*}
\rd_t \et(U|\Ubar)
&= \big(\rd_t \et(U) + \rd_x (G(U)-\n(\r)u u_x) \big)
- \rd_x \big(G(U)-\n(\r)u u_x \big) \\
&\qquad
-\nabla\et(\Ubar)(\rd_t U + \rd_x A(U))
+\nabla\et(\Ubar)\rd_x A(\Ubar)
+\nabla\et(\Ubar)\rd_x A(U|\Ubar) \\
&\qquad
+\rd_x \big(\nabla G(\Ubar)(U-\Ubar)\big)
-\nabla^2 \et(\Ubar) \rd_x A(\Ubar) (U-\Ubar).
\end{align*}
Here we used \(\rd_t \Ubar = 0\), and \(A(U)\) and \(A(U|\Ubar)\) are as follows:
\[
A(U)=\begin{pmatrix} 0 \\ \r u^2 + \r \end{pmatrix}, \quad
A(U|\Ubar) \coloneqq A(U) - A(\Ubar) - \nabla A(\Ubar)(U-\Ubar)
=\begin{pmatrix} 0 \\ \r(u-\ubar)^2 \end{pmatrix}.
\]
Since \(U\) is a solution to \eqref{NS-Eulerian}-\eqref{pressure-Eulerian} with the the entropy inequality \eqref{ent-ineq}, we get
\begin{align*}
\rd_t \et(U|\Ubar)
&\le -\n(\r)\abs{u_x}^2
- \rd_x \big(G(U)-\n(\r)u u_x \big)
- \ubar \rd_x\big(\n(\r) u_x \big)
+\nabla\et(\Ubar)\rd_x A(\Ubar) \\
&\qquad
+\nabla\et(\Ubar)\rd_x A(U|\Ubar)
+\rd_x \big(\nabla G(\Ubar)(U-\Ubar)\big)
-\nabla^2 \et(\Ubar) \rd_x A(\Ubar) (U-\Ubar). 
\end{align*}
We then integrate over \(x\in\RR\) and use \(\supp \rd_x \Ubar \subset [-1,1]\) to get
\begin{align*}
&\frac{d}{dt}\int_\RR \et(U|\Ubar) dx + \int_\RR \n(\r)\abs{u_x}^2 dx
\le -\big(G(U)-\n(\r)u u_x \big) \Big|_{-\infty}^{\infty}
+ \int_{-1}^{1} \ubar_x \n(\r) u_x dx \\
&\qquad
-\int_{-1}^1 \rd_x \nabla \et(\Ubar) \big(A(\Ubar) + A(U|\Ubar) \big) dx
-\int_{-1}^1 \nabla^2 \et(\Ubar) \rd_x A(\Ubar) (U-\Ubar) dx.
\end{align*}
We use Young's inequality to obtain
\[
\int_{-1}^{1} \ubar_x \n(\r) u_x dx
\le \frac{1}{2} \int_{-1}^1 \n(\r) \abs{u_x}^2 dx + \norm{\ubar_x}_{L^\infty(\RR)}^2 \int_{-1}^1 \n(\r) dx,
\]
and so it holds by \(\abs{A(U|\Ubar)} \le \et(U|\Ubar)\) that there is a constant \(C>0\) depending on \(\|\Ubar\|_{W^{1,\infty}}\)
\begin{equation} \label{for-est1}
\begin{aligned}
\frac{d}{dt}\int_\RR \et(U|\Ubar) dx + \frac{1}{2}\int_\RR \n(\r)\abs{u_x}^2 dx
&\le C\int_{-1}^1 \n(\r) dx + C\int_{-1}^1 \abs{U-\Ubar} dx \\
&\qquad + C\int_{-1}^1 \et(U|\Ubar) dx + C.
\end{aligned}
\end{equation}
It remains to show that the right-hand side can be controlled by \(\et(U|\Ubar)\).
To this end, we use \eqref{MV-ub} with \eqref{r=0} to find that 
\[
C\int_{-1}^1 \n(\r) dx \le C + C \int_{-1}^1 \r dx \le C + C \int_{-1}^1 \r \Phif{\rbar}{\r} dx
\le C + C \int_{-1}^1 \et(U|\Ubar) dx.
\]
Next, using \eqref{r=0}, we get
\begin{align*}
\int_{-1}^1 \abs{U-\Ubar} dx
&\le \int_{-1}^1 \abs{\r-\rbar} dx + \int_{-1}^1 \r \abs{u-\ubar} dx + \int_{-1}^1 \abs{\ubar(\r-\rbar)} dx \\
&\le C + C \int_{-1}^1 \r dx + \int_{-1}^1 \r(u-\ubar)^2 dx \\
&\le C + C \int_{-1}^1 \r \Phif{\rbar}{\r} dx + \int_{-1}^1 \r(u-\ubar)^2 dx
\le C + C \int_{-1}^1 \et(U|\Ubar) dx,
\end{align*}
which together with \eqref{for-est1} gives
\[
\frac{d}{dt} \int_\RR \et(U|\Ubar) dx + \frac{1}{2} \int_\RR \n(\r)\abs{u_x}^2 dx
\le C + C\int_{-1}^1 \et(U|\Ubar) dx,
\]
and Gronwall's lemma establishes Lemma \ref{lem:uni-est1}.
\end{proof}

Now, we derive an \(L^\infty\) bound for the density \(\r\).
For that, we need the a priori estimate from Bresch-Desjardins entropy, which is given by the following:
for any \(C^2\) function \(\n(\r)\) with \eqref{MV-ub}, and any solution \((\r,u)\) to \eqref{NS-Eulerian}-\eqref{pressure-Eulerian} such that 
\[
u-\ubar\in L^2(0,T;H^2(\RR)), \quad
\r-\rbar\in L^\infty(0,T;H^1(\RR)), \quad
0<m\le\r\le M,
\]
there exists \(C(T)>0\), which depends only on \(T>0\), \(\Ubar\), the initial value \(U_0\), and the constant \(C\) appearing in \eqref{MV-ub}, such that the following inequality holds:
\begin{equation} \label{uni-est2}
\sup_{[0,T]} \int_\RR \bigg(\frac{1}{2}\r \abs{(u-\ubar)+\rd_x(\varphi(\r))}^2 + \r \Phif{\rbar}{\r}\bigg) dx
+\int_0^T \int_\RR \rd_x(\varphi(\r))\rd_x\r dx dt \le C(T),
\end{equation}
with \(\varphi\) such that
\(
\varphi'(\r)=\frac{\n(\r)}{\r^2}.
\)
Note that \eqref{uni-est1} and \eqref{uni-est2} with \eqref{MV-lb} provide a strictly positive lower bound for the density as in \cite{MVSIMA}.
Furthermore, as mentioned above, \eqref{uni-est2} corresponds to \cite[Lemma 3.2]{MVSIMA} and can be derived in the same manner.

We then present the following proposition, which gives an \(L^\infty\) bound for the density.

\begin{proposition} \label{prop:r-ub}
For every \(T>0\), there exists a constant \(\kbar(T)\) such that 
\[
\r(t,x) \le \kbar(T) \quad \forall (t,x)\in(0,T)\times\RR.
\]
\end{proposition}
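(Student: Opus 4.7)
My plan is to adapt the argument of \cite[Proposition~3.3]{MVSIMA}. In the barotropic case treated there, the upper bound on $\rho$ is obtained by controlling $\rho^{(\gamma-1)/2}$ in $L^\infty_t H^1_x$ and then invoking the one-dimensional Sobolev embedding $H^1(\RR)\hookrightarrow L^\infty(\RR)$. Since $\rho^{(\gamma-1)/2}\equiv 1$ in our isothermal setting ($\gamma=1$), I will replace this quantity by a truncation of $\log\rho$ and verify an analogous $H^1$ bound for it using the estimates already at hand.

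First, I combine Lemma~\ref{lem:uni-est1} with the BD-type estimate \eqref{uni-est2}. The triangle inequality in $L^2(\RR)$ applied to $\sqrt\rho\,\partial_x\varphi(\rho) = \sqrt\rho\bigl[(u-\bar u)+\partial_x\varphi(\rho)\bigr] - \sqrt\rho\,(u-\bar u)$ yields
\[
\sup_{[0,T]}\int_\RR \rho\,|\partial_x\varphi(\rho)|^2\,dx
\;=\;\sup_{[0,T]}\int_\RR \frac{\nu(\rho)^2}{\rho^3}\,|\partial_x\rho|^2\,dx \;\le\; C(T).
\]
Next, from the relative entropy part of \eqref{uni-est1}, $\sup_t\int\rho\,\Phi(\bar\rho/\rho)\,dx\le C(T)$, together with the asymptotics $\rho\,\Phi(\bar\rho/\rho)\sim\rho\log\rho$ as $\rho\to\infty$, I obtain for $N$ large enough compared to $\sup_x\bar\rho(x)$
\[
\sup_{[0,T]}\int_{\{\rho\ge N\}}\rho\log\rho\,dx \le C(T)
\quad\Longrightarrow\quad
\sup_{[0,T]}\int_{\{\rho\ge N\}}(\log\rho)^2\,dx\le C(T),
\]
the second inequality using $(\log\rho)^2\le \rho\log\rho$ for $\rho\ge e$.

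For the final step I set $f := (\log\rho-\log N)^+$, which vanishes off $\{\rho\ge N\}$. The previous step provides $\sup_t\|f(t,\cdot)\|_{L^2(\RR)}^2\le C(T)$. For the derivative, I note $|\partial_x f|^2 = |\partial_x\rho|^2/\rho^2\cdot\mathbf 1_{\{\rho>N\}}$, and I match this against the weighted estimate from step one using the lower bound $\nu(\rho)\ge C$ on $\{\rho\ge 1\}$ from \eqref{MV-lb} together with the BD dissipation part of \eqref{uni-est2}, which provides $\int_0^T\!\int \nu(\rho)\rho^{-2}|\partial_x\rho|^2\,dx\,dt\le C(T)$. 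The 1D Sobolev embedding then gives $\|f(t,\cdot)\|_{L^\infty}^2\le 2\|f(t,\cdot)\|_{L^2}\|\partial_x f(t,\cdot)\|_{L^2}$; combining this with the time continuity of $\rho$ inherited from $\rho-\bar\rho\in L^\infty(0,T;H^1(\RR))$, I conclude that $f$ is bounded uniformly on $[0,T]\times\RR$, i.e. $\log\rho\le \log N + C(T)$, whence $\rho\le Ne^{C(T)}=:\bar\kappa(T)$.

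The delicate point is the derivative control in the last step: the weight $\nu(\rho)^2/\rho^3$ produced by the pointwise-in-time BD estimate is not exactly the weight $1/\rho^2$ needed for $|\partial_x\log\rho|^2$, and for viscosities satisfying the degeneracy allowed by \eqref{MV-lb}--\eqref{MV-ub} the mismatch grows with $\rho$. The reason the logarithm (rather than a power $\rho^s$) is the correct surrogate is that it is the unique scaling under which the $L^2$ estimate coming from the relative entropy, the $\dot H^1$ estimate coming from the BD dissipation, and the truncation to the large-density set $\{\rho\ge N\}$ all match cleanly; for any positive power the $L^2$ norm would fail due to the nontrivial far-field state $\bar\rho$, and for a negative power the relevant BD weight would instead control the lower bound (as in \cite{MVSIMA}).
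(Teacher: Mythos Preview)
Your argument has a genuine gap in the control of \(\partial_x f\), which you yourself flag as ``the delicate point'' but do not actually resolve. You invoke two estimates: the pointwise-in-time bound \(\sup_t\int \nu(\rho)^2\rho^{-3}|\partial_x\rho|^2\,dx\le C(T)\) from the triangle inequality on \eqref{uni-est2}, and the time-integrated dissipation \(\int_0^T\!\int \nu(\rho)\rho^{-2}|\partial_x\rho|^2\,dx\,dt\le C(T)\). The first has weight \(\nu(\rho)^2/\rho^3\); comparing with \(|\partial_x f|^2=\rho^{-2}|\partial_x\rho|^2\mathbf 1_{\{\rho>N\}}\) requires \(\rho\le C\nu(\rho)^2\), which fails under the mere lower bound \(\nu(\rho)\ge C\) allowed by \eqref{MV-lb}. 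The second has the right weight but is only integrated in time, so it yields \(\partial_x f\in L^2((0,T)\times\RR)\), not \(L^\infty_tL^2_x\). Feeding this into the interpolation \(\|f(t)\|_{L^\infty}^2\le 2\|f(t)\|_{L^2}\|\partial_x f(t)\|_{L^2}\) gives at best \(f\in L^4(0,T;L^\infty(\RR))\), not an \(L^\infty_{t,x}\) bound. Your appeal to ``time continuity of \(\rho\) inherited from \(\rho-\bar\rho\in L^\infty(0,T;H^1)\)'' does not close this: in an a~priori estimate we know this regularity qualitatively, but the norm is not yet controlled by initial data, so the argument would be circular.

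The paper fixes this by working in \(W^{1,1}_{loc}\) rather than \(H^1\). Setting \(r=\max(\rho,1)\) and splitting the \(L^1\) norm of \(\partial_x\log r\) differently via Cauchy--Schwarz,
\[
\int_K\frac{|\partial_x\rho|}{\rho}\mathbf 1_{\{\rho\ge1\}}\,dx
\;\le\;\Big(\int_K\rho\,dx\Big)^{1/2}\Big(\int_K\frac{|\partial_x\rho|^2}{\rho^3}\mathbf 1_{\{\rho\ge1\}}\,dx\Big)^{1/2},
\]
makes both factors controllable \emph{pointwise in time}: the first by \(\rho\le C(1+\rho\Phi(\bar\rho/\rho))\) and the relative entropy, the second by the pointwise BD estimate since on \(\{\rho\ge1\}\) one has \(\rho^{-3}\le C\,\nu(\rho)^2\rho^{-3}=C\,\rho(\varphi'(\rho))^2\). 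Together with \(\log r\le\rho\in L^\infty_tL^1_{loc}\), this puts \(\log r\) in \(L^\infty(0,T;W^{1,1}_{loc}(\RR))\) with bounds depending only on \(|K|\), and the one-dimensional embedding \(W^{1,1}\hookrightarrow L^\infty\) then yields the uniform upper bound on \(\rho\). The key point you missed is that lowering the target from \(H^1\) to \(W^{1,1}\) lets you borrow a factor \(\rho^{1/2}\) from the entropy, turning the available weight \(\nu^2/\rho^3\) into exactly what is needed.
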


\begin{proof}
Let \(r(t,x)\coloneqq \sup(\r(t,x),1) = 1 + (\r(t,x)-1)_+\).
Notice that 
\[
\rd_x r = \rd_x \r \one{\r\ge1}.
\]
From \eqref{uni-est2} with \eqref{MV-lb}, it holds that \(\rd_x (\log r)\) is bounded in \(L^2((0,T)\times\RR)\).\\
Moreover, for every compact subset \(K\) of \(\RR\), we have
\begin{align*}
\int_K \abs{\rd_x (\log r)}dx
&=\int_K \abs{\frac{\rd_x r}{r}}dx
\le \bigg(\int_K \r dx\bigg)^{1/2}\bigg(\int_K \frac{1}{\r^3} (\rd_x \r)^2 dx\bigg)^{1/2}\\
&\le \bigg(\int_K \r dx\bigg)^{1/2}\bigg(\int_K \r (\varphi'(\r))^2 (\rd_x \r)^2 dx\bigg)^{1/2}
\end{align*}
and so using \eqref{r=0} we get 
\[
\int_K \abs{\rd_x (\log r)}dx
\le C\bigg(\abs{K}+\int_K \r\Phif{\rbar}{\r}dx\bigg)^{1/2}
\bigg(\int_K \r (\varphi'(\r))^2 (\rd_x \r)^2 dx\bigg)^{1/2}.
\]
Since 
\[
\log r = \log \r \one{\r\ge1} \le \r \le C_1 (1+ \r \Phif{\rbar}{\r}),
\]
we deduce that 
\[
(\log r) \text{ is bounded in } L^\infty(0,T;W_{loc}^{1,1}(\RR)),
\]
and the \(W^{1,1}(K)\) norm of \(\log r(t,\cdot)\) only depends on \(\abs{K}\).
Thus, Sobolev embedding yields an \(L^\infty((0,T)\times\RR)\) bound for \(\log r\), and this directly implies Proposition \ref{prop:r-ub} as we desired.
\end{proof}

Then, by following the standard parabolic arguments in \cite{MVSIMA} to the results obtained thus far, the proof of Theorem \ref{thm:MV} is completed.

The proof of Theorem \ref{thm:KV-AP} follows from the same argument as in the barotropic case \cite{KV-Existence} and is outlined as follows.
Firstly, we perturb the viscosity coefficient \(\m(\r)\) by adding \(\e\r^{1/4}\), where \(\e\) is a small parameter.
We then apply Theorem \ref{thm:MV} to obtain a global strong solution \((\r_\e,u_\e)\) to \eqref{NS-Eulerian}-\eqref{pressure-Eulerian} with the perturbed viscosity coefficient \(\m_\e\), their \(H^1\) regularity and a positive lower bound for \(\r_\e\).
Next, we introduce a new quantity \(w_\e \coloneqq -p(\r_\e)+\m_\e(\r_\e)\rd_x u_\e\) so-called the active potential, which is motivated by \cite{CDNP}, to eliminate the \(\e\)-dependence of the lower bound.
The remainder follows from standard analysis. Thus, we omit the proof.

\section{Proof of Theorem \ref{thm_main}: Contraction Property} \label{sec:theo}
\setcounter{equation}{0}
\subsection{Preliminaries: the method of $a$-contraction with shifts}
\subsubsection{Transformation of the system \eqref{main0}}
We here provide an equivalent version of Theorem \ref{thm_main} as in \cite{KV21,KV-Inven}.
First of all, we introduce a new effective velocity 
\[
h \coloneqq u - \frac{v_x}{v^{\a+1}}.
\]
Indeed, for \(\a=0\), \(h=u-(\log v)_x\) and for \(\a\in(0,1]\), \(h=u+\frac{1}{\a}(\frac{1}{v^\a})_x\).
Then, the system \eqref{main0} is transformed into
\begin{align}
\left\{
\begin{aligned} \label{main-0}
    &v_t-h_x = \left(\frac{v_x}{v^{\a+1}}\right)_x = - (v^\b p(v)_x)_x, \\
    &h_t+p(v)_x = 0,
\end{aligned} \right.
\end{align}
where \(\b \coloneqq 1-\a \in [0,1]\).
We consider viscous shock waves with the suitably small amplitude \(\e=\abs{p(v_+)-p(v_-)}\ll 1\), so let \(\s_\e\) denote a shock speed with the amplitude \(\e\), which is determined by \eqref{end-con}.
Then, setting \(\htil \coloneqq \util - \frac{\vtil'}{\vtil^{\a+1}}\), the shock wave \((\vtil,\htil)(x-\s_\e t)\) satisfies the following:
\begin{align}
\left\{
\begin{aligned} \label{shock}
    &-\s_\e \vtil'-\util' = -(\vtil^\b p(\vtil)')', \\
    &-\s_\e \htil'+p(\vtil)' = 0, \\
    &\lim_{\x\to\pm\infty}(\vtil,\htil)(\x) = (v_\pm,u_\pm).
\end{aligned}
\right.
\end{align}
For simplification of our analysis, we rewrite \eqref{main-0} into the following system, based on the change of variable \((t,x)\mapsto(t,\x=x-\s_\e t)\):
\begin{align}
\left\{
\begin{aligned} \label{main}
    &v_t - \s_\e v_\x - h_\x = -(v^\b p(v)_\x)_\x, \\
    &h_t - \s_\e h_\x +p(v)_\x = 0, \\
    &v|_{t=0}=v_0, \quad h|_{t=0}=u_0 - \frac{(v_0)_x}{(v_0)^{\a+1}} \eqqcolon h_0.
\end{aligned}
\right.
\end{align}
For the global-in-time existence of solutions to \eqref{main}, we consider the function space:
\[
\Hcal_T \coloneqq \{(v,h) \mid v-\underline{v} \in C(0,T;H^1(\RR)),\, h-\underline{u} \in C(0,T;L^2(\RR)),\,
v^{-1} \in L^\infty((0,T)\times\RR)\}
\]
where \(\underline{v}\) and \(\underline{u}\) are as in \eqref{sm-end}.

Theorem \ref{thm_main} is a direct consequence of the following theorem on the contraction of shocks to the system \eqref{main}.

\begin{theorem}\label{thm_main3}
Let \(\a\in[0,1]\) be any constant satisfying \(\a\in[0,1]\).
Consider the system \eqref{main} with \eqref{pressure}.
For a given constant state \((v_-,u_-)\in\RR^+\times\RR\), 
there exist constants \(\e_0, \d_0>0\) such that the following statement holds.\\
For any \(\e<\e_0\), \(\d_0^{-1}\e<\l<\d_0\), and any \((v_+,u_+)\in\RR^+\times\RR\) satisfying \eqref{end-con} with \(\abs{p(v_+)-p(v_-)} = \e\), there exists a smooth monotone function \(a\colon \RR\to \RR^+\) with \(\lim_{x\to\pm\infty} a(x)=1+a_{\pm}\) for some constants \(a_-\) and \(a_+\) with \(\abs{a_+-a_-}=\l\) such that the followings hold.\\
Let \(\Ut\coloneqq (\vtil,\htil)\) be the viscous shock connecting \((v_-,u_-)\) and \((v_+,u_+)\) as a solution of \eqref{shock}.
For a given \(T>0\), let \(U\coloneqq (v,h)\) be a solution in \(\Hcal_T\) to \eqref{main} with a initial datum \(U_0\coloneqq (v_0,h_0)\) satisfying \(\int_\RR \et(U_0|\Ut) dx<\infty\). 
Then there exist a shift \(X\in W^{1,1}(0,T)\) and a constant \(C>0\) (independent on \(\e,\l\) and \(T\)) such that 
\begin{align}
\begin{aligned}\label{cont_main3}
&\int_\RR a(\x) \et\big(U(t,\x+X(t))| \Ut(\x)\big) d\x \\
&\qquad +\d_0\frac{\e}{\l} \int_0^T \int_\RR \abs{\s_\e a'(\x)} \Phi\left(v(s,\x+X(s))/\vt(\x)\right) d\x ds \\
&\qquad +\d_0 \int_0^T \int_\RR a(\x)
v^\b(s,\x+X(s))\abs{\rd_\x\left(p(v(s, \x+X(s)))-p(\vt(\x))\right)}^2 d\x ds \\
&\le \int_\RR a(\x) \et\big(U_0(\x)|\Ut(\x)\big) d\x, \\
\end{aligned}
\end{align}
and 
\begin{align}
\begin{aligned} \label{est-shift3}
&\abs{\dot{X}(t)}\le \frac{C}{\e^2}\left(f(t)+1 \right), \quad \text{for a.e. } t\in[0,T], \\
&\text{for some positive function \(f\) satisfying} \quad \norm{f}_{L^1(0,T)} \le\frac{\l}{\d_0\e}\int_\RR \et(U_0|\Ut) d\x.
\end{aligned}
\end{align}
\end{theorem}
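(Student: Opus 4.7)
The plan is to implement the method of $a$-contraction with shifts in the BD-reformulation \eqref{main}, adapted to the isothermal pressure $p(v)=1/v$. First I construct a smooth monotone weight $a\colon\RR\to\RR^+$ with $\lim_{\xi\to\pm\infty}a(\xi)=1+a_\pm$ and $\abs{a_+-a_-}=\lambda$, essentially setting
\[
a(\xi) = 1 - \frac{\lambda}{\e}\bigl(p(\vt(\xi))-p(v_-)\bigr)
\]
with sign chosen according to whether $(\vt,\htil)$ is a $1$- or $2$-shock. This makes $a'$ monotone with a definite sign on the shock layer and produces, after using the profile equation \eqref{shock}, the second term $\d_0\frac{\e}{\l}\int\abs{\s_\e a'}\Phi(v^X/\vt)\,d\xi$ on the left of \eqref{cont_main3} as a built-in weighted dissipation. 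Simultaneously I define the shift by an absolutely continuous ODE
\[
\dot X(t) = -\frac{M}{\e^2}\,Y\bigl(U^X(t,\cdot)\bigr),
\]
where $U^X(t,\xi)\coloneqq U(t,\xi+X(t))$ and $Y$ is a linear-in-perturbation functional designed to annihilate the most singular bad terms at leading order; the constant $M$ is taken large but independent of $\e$ and $\l$, and the structure of $Y$ will yield \eqref{est-shift3} directly by Cauchy--Schwarz applied to the weighted relative entropy.

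Next I differentiate $\int_\RR a(\xi)\,\eta(U^X(t,\xi)\mid \Ut(\xi))\,d\xi$ using \eqref{main} and \eqref{shock}, and decompose the result into: (i) the shift contribution $-\dot X(t)\cdot Y$; (ii) hyperbolic flux terms together with pressure-linear-in-perturbation pieces which the weight $a$ and the shift $X$ are tuned to dominate; (iii) the parabolic dissipation $-\int a\,v^\beta\,\abs{\rd_\xi(p(v^X)-p(\vt))}^2\,d\xi$ arising from the viscous term $-(v^\beta p(v)_\xi)_\xi$; (iv) the weighted entropy dissipation from $a'$; and (v) sign-indefinite remainders of cubic and higher order in the perturbation. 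The algebraic heart of the argument is to rewrite the relative pressure for $p(v)=1/v$ in the form
\[
p(v\mid\vt) = -\tfrac{1}{v\vt^2}(v-\vt)^2 + O\bigl((v-\vt)^3\bigr),
\]
so that the leading quadratic part combines cleanly with $\Phi(v/\vt)$ and with the flux $h^X-\htil$. After this reduction, the contraction inequality \eqref{cont_main3} becomes the nonnegativity of an explicit quadratic form in $(p(v^X)-p(\vt),\ h^X-\htil)$ modulated by $a$, $\abs{a'}$, and the parabolic dissipation; this positivity is established by combining a sharp Poincaré-type inequality on the shock layer (where $\abs{a'}\gtrsim \lambda/\e$) with Cauchy--Schwarz against the $v^\beta$-weighted $H^1$-dissipation, and holds precisely in the regime $\e\ll\lambda\ll 1$.

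The main obstacle I expect is the handling of step (v), namely the higher-order remainders specific to the isothermal pressure. Unlike the isentropic $\gamma>1$ case of \cite{KV-Inven}, the relative functional $\Phi(z)=z-1-\log z$ is not homogeneous, blows up only logarithmically as $z\to 0^+$, and grows only linearly at $\infty$; consequently the standard pointwise bound $\abs{v-\vt}^3\lesssim \Phi(v/\vt)$ fails in both the small-$v$ and large-$v$ tails. I plan to overcome this by splitting the domain into a near-equilibrium region $\{c^{-1}\le v/\vt\le c\}$, where $\Phi$ is quadratic and the classical $a$-contraction estimates apply verbatim, and a far-from-equilibrium region where the parabolic dissipation $v^\beta\abs{\rd_\xi p(v)}^2 = v^{-1-\alpha}\abs{v_\xi}^2$ is strong enough, after integration by parts and use of the BD-type control of $(\log v)_\xi$ coming from the structure of $h=u-v_\xi/v^{\alpha+1}$, to absorb the cubic corrections via a weighted Gagliardo--Nirenberg inequality. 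Once the quadratic form is shown to be nonnegative, the estimate \eqref{cont_main3} follows by integrating in time, and \eqref{est-shift3} follows from the definition of $\dot X$ together with the resulting $L^1$-control of $Y(U^X)$ inherited from the weighted dissipation.
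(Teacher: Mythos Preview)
Your outline captures the weight $a$, the relative-entropy differentiation, and the need for a Poincar\'e-type mechanism, but two structural choices diverge from the paper in ways that would prevent the argument from closing.

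\textbf{The shift ODE.} You set $\dot X=-\tfrac{M}{\e^2}Y(U^X)$ with $Y$ ``linear-in-perturbation''. In the paper $Y$ is \emph{not} linear: it carries the quadratic piece $-\int a'\eta(U|\Ut)\,d\xi$ in addition to the linear one. More importantly, the paper does \emph{not} use a bare $-\tfrac{M}{\e^2}Y$; it uses the saturated form
\[
\dot X=\Phi_\e\bigl(Y(U^X)\bigr)\bigl(2|\Jcal^{bad}(U^X)|+2|\Jcal^{para}(U^X)|+1\bigr),
\]
with $\Phi_\e$ a truncation at level $\e^2$. This produces a dichotomy that is the backbone of the proof: when $|Y|\ge\e^2$, the product $\dot X\,Y$ is bounded above by $-2|\Jcal^{bad}|-2|\Jcal^{para}|$ and kills \emph{all} bad terms outright; when $|Y|\le\e^2$, this smallness is fed into Lemma~\ref{lemmeC2} to obtain $\int a'\Phi(v/\vt)+\int a'(h-\htil)^2\le C\e^2/\l$, which is the quantitative input that makes the nonlinear Poincar\'e estimate (Proposition~\ref{prop:main3}) close. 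Your unsaturated ODE gives only $\dot X\,Y=-\tfrac{M}{\e^2}Y^2$, which neither dominates $|\Jcal^{bad}|$ for large $Y$ nor furnishes the constraint $|Y|\le\e^2$ needed in the small regime; the two-case structure collapses. The bound \eqref{est-shift3} also relies on this: $|\dot X|$ is controlled by $|\Jcal^{bad}|+|\Jcal^{para}|+1$, whose $L^1$-in-time norm is then read off from the contraction inequality itself.

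\textbf{The far-from-equilibrium tail.} You propose to handle the region $\{v/\vt\notin[c^{-1},c]\}$ by a weighted Gagliardo--Nirenberg inequality together with ``BD-type control of $(\log v)_\xi$ coming from $h=u-v_\xi/v^{\alpha+1}$''. In the $h$-formulation \eqref{main} there is no such control: $h$ is an independent unknown and $v_\xi$ does not appear in it. The only handle on $v_\xi$ is the parabolic dissipation $\Dcal(U)=\int a\,v^\b|\rd_\xi(p(v)-p(\vt))|^2$. The paper exploits this not through a GN inequality but through a hard truncation at $|p(v)-p(\vt)|=\d_3$: Proposition~\ref{prop:main3} is proved only for the truncated variable $\bar v$, and the discrepancies $\Bcal_i(U)-\Bcal_i(\bar U)$ are controlled term by term (Propositions~\ref{prop_hyperbolic_out}--\ref{prop:shift}) via pointwise bounds obtained from the fundamental theorem of calculus against $\Dcal$ (Lemma~\ref{lem-pw}). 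The linear-at-infinity growth of $\Phi$ that worries you is handled precisely there, through \eqref{rel_Phi-L} and \eqref{Q-lin}, not by any interpolation inequality.
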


\begin{remark}
It is enough to prove Theorem \ref{thm_main3} for \(2\)-shocks.
For \(1\)-shocks, the change of variables \(x \to -x, u \to -u, \s_\e\to -\s_\e\) gives the corresponding result.
Thus, from now on, we consider a \(2\)-shock \((\vtil,\htil)\) only, that is, \(v_-<v_+, u_->u_+\) and 
\(
\s_\e = \sqrt{-\frac{p(v_+)-p(v_-)}{v_+-v_-}} > 0.
\)
\end{remark}

\begin{remark}\label{rem-vh}
Note that the solution \((v,u)\in\Xcal_T\) to \eqref{main0} gives a solution \((v,h)\) to \eqref{main} within the function space \(\Hcal_T\).
Indeed, since \(v_t = u_x \in L^2(0,T;H^1(\RR))\), we obtain \(v-\underbar{v} \in C(0,T;H^1(\RR))\). In order to show \(h - \underbar{u} \in C(0,T;L^2(\RR))\), we observe that for \((v,u) \in \Xcal_T\),
\[
h - \underbar{u} = u - \underbar{u} - \frac{v_x}{v^{\a+1}} \in L^\infty(0,T;L^2(\RR)).
\]
Moreover, applying Sobolev embedding, we obtain
\begin{align*}
    u_t &= -p'(v)v_x - (\a+1) \frac{v_x u_x}{v^{\a+2}} + \frac{u_{xx}}{v^{\a+1}} \in L^2(0,T;L^2(\RR)),\\
    \left(\frac{v_x}{v^{\a+1}}\right)_t &= -(\a+1)\frac{v_t v_x}{v^{\a+2}} + \frac{v_{xt}}{v^{\a+1}} = -(\a+1)\frac{u_x v_x}{v^{\a+2}} + \frac{u_{xx}}{v^{\a+1}} \in L^2(0,T;L^2(\RR)),
\end{align*}
which implies \(h_t \in L^2(0,T;L^2(\RR))\). Thus, we conclude that \(h - \underbar{u} \in C(0,T;L^2(\RR))\).
\end{remark}

\subsubsection{Global and local estimates on the relative quantities}
We here present useful inequalities on \(\Phi\) and \(p\) that will be crucially used for the proofs of main results.
First, the following lemma provides some global inequalities on the relative functional \(\Phi(\cdot/\cdot)\) corresponding to the convex function \(Q(v)=-\log v,\,v>0\).

\begin{lemma} \label{lem-pro}
For a given constant \(v_->0\), there exist constants \(c_1,c_2,c_3>0\) such that the following inequalities hold. \\
\begin{enumerate}
    \item For any \(w\in[\frac{1}{2}v_-,2v_-]\),
    \begin{equation} \label{rel_Phi}
    \begin{aligned}
    c_1\abs{v-w}^2 &\le \Phif{v}{w} \le c_1^{-1} \abs{v-w}^2 \quad 
    &&\text{for all } v \in \Big(\frac{1}{3}v_-,3v_-\Big), \\
    c_2 \abs{v-w} &\le \Phif{v}{w} \quad 
    &&\text{for all } v \in \RR^+\setminus\Big(\frac{1}{3}v_-,3v_-\Big).
    \end{aligned}
    \end{equation}
    \item Moreover, if \(0<w\le u \le v\) or \(0<v \le u \le w\), then
    \begin{equation} \label{Phi-sim}
    \Phif{v}{w} \ge \Phif{u}{w},
    \end{equation} 
    and for any \(\d_* \in (0,w/2)\), there exists a constant \(c_3>0\) such that if, in addition, \(w\in[\frac{1}{2}v_-,2v_-]\), \(\abs{w-v} > \d_*\), and \(\abs{w-u}\le\d_*\), then
    \begin{equation} \label{rel_Phi-L}
    \Phif{v}{w}-\Phif{u}{w} \ge c_3 \abs{v-u}.
    \end{equation}
\end{enumerate}
\end{lemma}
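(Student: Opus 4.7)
The function $\Phi(z)=z-1-\log z$ is smooth on $(0,\infty)$ with $\Phi(1)=0$, $\Phi'(z)=1-1/z$, and $\Phi''(z)=1/z^2>0$, so $\Phi$ is strictly convex, strictly decreasing on $(0,1]$, and strictly increasing on $[1,\infty)$. Part (2) follows immediately from this monotonicity: if $0<w\le u\le v$, both $u/w$ and $v/w$ lie in $[1,\infty)$ with $u/w\le v/w$; if $0<v\le u\le w$, both lie in $(0,1]$ with $v/w\le u/w$; in either case monotonicity of $\Phi$ on the appropriate branch gives $\Phif{v}{w}\ge\Phif{u}{w}$.

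For part (1), I would use that $\Phi(z)/(z-1)^2$ extends continuously to $1/2$ at $z=1$ and is strictly positive on $(0,\infty)\setminus\{1\}$. Under the hypothesis $w\in[v_-/2,2v_-]$ and $v\in(v_-/3,3v_-)$, the quotient $z=v/w$ lies in the compact set $[1/6,6]\subset(0,\infty)$, so $\Phi(z)/(z-1)^2$ is pinched between two positive constants there by compactness; reverting to $(v-w)^2/w^2$ and using the two-sided bound on $w$ yields both quadratic estimates in \eqref{rel_Phi}. For the linear bound, $v\notin(v_-/3,3v_-)$ forces $z=v/w\in(0,2/3]\cup[3/2,\infty)$, and I would check that $\Phi(z)/|v-w|=\Phi(z)/(w|z-1|)$ has a positive infimum on this set: continuity handles compact pieces, while $\Phi(z)/(z-1)\to 1$ as $z\to\infty$ and $\Phi(z)\to\infty$ as $z\to 0^+$ with $|v-w|\le w\le 2v_-$ remaining bounded.

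Part (3) is the only step requiring real work, and I would work within the monotonicity regime of (2) so that $u$ lies between $w$ and $v$. Starting from the identity
\[
\Phif{v}{w}-\Phif{u}{w}=\int_{u}^{v}\Bigl(\tfrac{1}{w}-\tfrac{1}{s}\Bigr)\,ds,
\]
in the case $w\le u\le w+\delta_*$ and $v>w+\delta_*$ the integrand $f(s)=1/w-1/s$ is nonnegative, increasing, and concave on $[u,v]$ (since $f''(s)=-2/s^3<0$); the trapezoidal inequality for concave functions gives
\[
\int_u^v f(s)\,ds\ge\tfrac{(v-u)(f(u)+f(v))}{2}\ge\tfrac{(v-u)\,f(v)}{2},
\]
and $f(v)=1/w-1/v\ge\delta_*/(w(w+\delta_*))$ because $v>w+\delta_*$. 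Using $w\le 2v_-$ and $w+\delta_*<3w/2\le 3v_-$, this yields $c_3(v-u)$ with $c_3$ of order $\delta_*/v_-^2$. The mirror case $v<w-\delta_*\le u\le w$ is handled by rewriting the integral as $\int_v^u(1/s-1/w)\,ds$ with the now-convex, decreasing integrand $g(s)=1/s-1/w$, and applying the midpoint lower bound $\int_v^u g\ge(u-v)\,g((u+v)/2)$; since $(u+v)/2<w-\delta_*/2$, the midpoint value is at least $\delta_*/(w(2w-\delta_*))$, giving the analogous bound. The main obstacle — and the reason a naive pointwise bound is too crude — is that $|v-u|$ can be arbitrarily small even though $|v-w|>\delta_*$, so the lower bound must scale with the full length $|v-u|$ rather than with $\delta_*$ alone; it is precisely the concavity/convexity of the integrand that converts the one-sided endpoint control of $f$ on $[u,v]$ into a uniform linear bound over the entire interval.
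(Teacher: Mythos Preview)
Your proof is correct. The paper does not actually give a proof of this lemma; it only cites \cite[Lemma~3.1]{EEK-BNSF}. Your argument is self-contained and elementary: compactness of the ratio $z=v/w$ on $[1/6,6]$ together with the Taylor expansion $\Phi(z)\sim(z-1)^2/2$ handles the quadratic bounds; the asymptotics $\Phi(z)/(z-1)\to1$ as $z\to\infty$ and $\Phi(z)\to\infty$ as $z\to0^+$ (with $|v-w|\le 2v_-$) handle the linear bound; monotonicity of $\Phi$ on each branch gives \eqref{Phi-sim}; and your integral identity $\Phi(v/w)-\Phi(u/w)=\int_u^v(1/w-1/s)\,ds$ combined with the trapezoidal (concave case) and midpoint (convex case) inequalities yields \eqref{rel_Phi-L} with an explicit $c_3\sim\delta_*/v_-^2$. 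One small cosmetic point: the lemma has only two enumerated items, so what you call ``Part~(3)'' is the second half of item~(2); and in the linear estimate of item~(1) you may want to state explicitly that on $(0,2/3]$ the ratio $\Phi(z)/|z-1|$ is continuous, positive, and tends to $+\infty$ at $0^+$, hence has a positive infimum (you only sketched the endpoint behaviour). Otherwise the argument is complete.
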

\begin{proof}
The proof can be found in \cite[Lemma 3.1]{EEK-BNSF}.
\end{proof}

We now provide some local estimates on \(p(v|w)\) and \(\Phi(v/w)\) for \(\abs{v-w}\ll 1\), which can be derived from Taylor expansion.

\begin{lemma} \label{lem:local}
For a given constant \(v_->0\), there exists a constant \(\d_*>0\) such that
for any \(\d \in (0,\d_*)\), the followings are true.
\begin{enumerate}
    \item For any \(v,w\in\RR^+\) satisfying \(\abs{1/v-1/w} \le \d\) and \(\abs{1/w-1/v_-} \le \d\),
    \begin{equation} \label{Phi-est1}
    \frac{1}{2}\Big(\frac{v}{w}-1\Big)^2 - \frac{1}{3} \Big(\frac{v}{w}-1\Big)^3 \le \Phif{v}{w} \le \frac{1}{2}\Big(\frac{v}{w}-1\Big)^2.
    \end{equation}
    \item For any \(v,w\in\RR^+\) satisfying \(\abs{p(v)-p(w)}<\d\) and \(\abs{p(w)-p(v_-)}<\d\),
    \begin{equation} \label{p-est1}
    p(v|w) \le \Big(\frac{1}{p(w)}+C\d\Big) \abs{p(v)-p(w)}^2
    \end{equation}
    where the relative pressure is defined as 
    \[
    p(v|w) =p(v)-p(w)-p'(w)(v-w).
    \]
    \item For any \(v,w\in\RR^+\) with \(\abs{p(w)-p(v_-)} \le \d\), if they satisfy either \(\Phi(v/w)<\d\) or \(\abs{p(v)-p(w)} < \d\), then
    \begin{equation} \label{v-quad}
    \abs{v-w}^2 \le C \Phif{v}{w},
    \end{equation}
    \begin{equation} \label{p-quad}
    \abs{p(v)-p(w)}^2 \le C \Phif{v}{w}.
    \end{equation}
\end{enumerate}
\end{lemma}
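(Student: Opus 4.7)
The plan is to derive each estimate either by a Taylor expansion near $z=1$ (respectively, near $p(v_-)$) or by a direct algebraic identity, after first using the smallness hypothesis $\delta < \delta_*$ to confine both $v$ and $w$ to a fixed compact neighborhood of $v_-$ on which $p$ is bi-Lipschitz.

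For (1), I would first observe that the hypotheses $\abs{1/v - 1/w} \le \delta$ and $\abs{1/w - 1/v_-} \le \delta$ give $\abs{1/v - 1/v_-} \le 2\delta$, so for $\delta_*$ sufficiently small, both $v$ and $w$ lie in a fixed compact subinterval of $(0,\infty)$ around $v_-$, and consequently $\abs{v/w - 1} \le C\delta$. Writing $y = v/w - 1$, the two inequalities in \eqref{Phi-est1} then follow from the convergent Maclaurin expansion
\[
\Phi(1+y) = \frac{y^2}{2} - \frac{y^3}{3} + \frac{y^4}{4} - \cdots, \qquad \abs{y} \le C\delta,
\]
by grouping the alternating tail and comparing it against the leading terms; equivalently, one writes $\Phi(1+y) = \int_0^y (y-s)/(1+s)^2\, ds$ and bounds the integrand by its Taylor expansion.

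For (2), the crucial step is to establish the algebraic identity
\[
p(v|w) = \frac{(p(v)-p(w))^2}{p(v)}.
\]
This follows by inserting $p(v)=1/v$, $p'(w)=-1/w^2$ into $p(v|w) = p(v)-p(w)-p'(w)(v-w)$ and simplifying, using $v=1/p(v)$ and $w = 1/p(w)$. Once this identity is in hand, the smallness conditions $\abs{p(v)-p(w)}<\delta$ and $\abs{p(w)-p(v_-)}<\delta$ imply that $p(v)$ stays bounded away from $0$, and a first-order Taylor expansion of $x\mapsto 1/x$ around $x=p(w)$ yields $1/p(v) \le 1/p(w) + C\delta$, which gives \eqref{p-est1}.

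For (3), either alternative hypothesis confines $v$ to a small neighborhood of $w$: if $\abs{p(v)-p(w)} < \delta$, this is immediate because $p$ is bi-Lipschitz on the region fixed by $\abs{p(w)-p(v_-)}\le\delta$; if instead $\Phi(v/w) < \delta$, the global bound in \eqref{rel_Phi} from Lemma \ref{lem-pro} forces $v \in (\tfrac{1}{3}v_-, 3v_-)$ (for $\delta_*$ small), and then the quadratic lower estimate in \eqref{rel_Phi} yields $\abs{v-w} \le C\sqrt{\delta}$. With $v$ and $w$ both trapped in a fixed compact subset of $(0,\infty)$, the quadratic equivalence in \eqref{rel_Phi} delivers \eqref{v-quad}, and then \eqref{p-quad} follows from the bi-Lipschitz bound $\abs{p(v)-p(w)} = \abs{v-w}/(vw) \le C\abs{v-w}$ on this subset.

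The main (minor) hurdle is recognizing the clean factorization in part (2); the rest are routine Taylor/compactness arguments that leverage Lemma \ref{lem-pro}, and no structural difficulty is expected.
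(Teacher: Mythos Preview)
Your proposal is correct and mirrors the paper's own treatment: the paper defers \eqref{Phi-est1} to a citation (standard Taylor expansion, as you outline), omits \eqref{p-est1} entirely (the identity $p(v|w)=v\,(p(v)-p(w))^2$ you isolate is in fact invoked verbatim later in the paper, in the ``Change of variable for $\Ical_2$'' step), and for \eqref{v-quad}--\eqref{p-quad} argues exactly as you do via the two alternatives of Lemma~\ref{lem-pro}. The only cosmetic difference is that for the case $|p(v)-p(w)|<\delta$ the paper first bounds $\Phi(v/w)\le C|p(v)-p(w)|^2\le \delta$ via \eqref{Phi-est1} and then reduces to the other alternative, whereas you handle it directly through the bi-Lipschitz property of $p$; both routes are equivalent.
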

\begin{proof}
\bpf{Phi-est1} For the proof of \eqref{Phi-est1}, we refer to \cite[Lemma 3.2]{EEK-BNSF}.\\
\textbf{Proofs of \eqref{v-quad} and \eqref{p-quad}:} First, observe that \eqref{rel_Phi} implies \(\min\{c_1 \abs{v-w}^2, c_2 \abs{v-w}\} \leq \Phi(v/w)\).
If \(\Phi(v/w) \leq  \d < \d_* \ll 1\), then \(\abs{v-w} \ll 1\), which gives \(v_-/2 < v < 2v_-\) and \(c_1 \abs{v-w}^2 \leq \Phi(v/w)\). Therefore, we obtain 
\begin{equation}\label{quad-est}
    \abs{p(v) - p(w)}^2 \leq \abs{p'(v_-/2)}\abs{v-w}^2 \leq c_1^{-1}\abs{p'(v_-/2)}\Phif{v}{w}.
\end{equation}
If \(\abs{p(v) - p(w)} \leq \d\), then it follows from \eqref{Phi-est1} that 
\[
    \Phif{v}{w} \leq C\abs{p(v) - p(w)}^2 \leq \d,
\]
which gives \eqref{quad-est}.
Thus, both \eqref{v-quad} and \eqref{p-quad} hold.
\end{proof}

Throughout the remainder of this section and the succeeding section, \(C\) denotes positive constants which may change from line to line, but which stays independent on \(\e\) (the shock strength) and \(\l\) (the total variation of the function \(a\)).
We will consider two smallness conditions, one on \(\e\), and the other on \(\e/\l\).
In the argument, \(\e\) will be far smaller than \(\e/\l\).

\subsubsection{Properties of small shock waves}
In this subsection, we present useful properties of the \(2\)-shock wave \((\vtil,\htil)\) with small amplitude \(\e\).
In the sequel, without loss of generality, we consider the \(2\)-shock wave \((\vtil,\htil)\) satisfying \(\vtil(0)=\frac{v_-+v_+}{2}\).
Notice that the estimates in the following lemma also hold for \(\htil\) since we have \(\htil' = \frac{p'(\vtil)}{\s_\e}\vtil'\) and \(C^{-1} \le \frac{p'(\vtil)}{\s_\e} \le C\).
That said, since the estimates for \(\vtil\) are enough in our analysis, we state the estimates only for \(\vtil\).
\begin{lemma} \label{lem-VS}
For a given constant state \((v_-,u_-)\in\RR^+\times\RR\), there exist positive constants \(\e_0,C,C_1,C_2\) such that for any \(\e \in (0,\e_0)\), the followings are true.\\
Let \(\vtil\) be the \(2\)-shock wave with amplitude \(\abs{p(v_+)-p(v_-)}=\e\) and such that \(\vtil(0)=\frac{v_-+v_+}{2}\).
Then,
\begin{equation} \label{tail}
\begin{aligned}
C^{-1} \e^2 e^{-C_1 \e \abs{\x}} &\le \vtil'(\x) \le C \e^2 e^{-C_2 \e \abs{\x}}, &&\forall \x \in \RR.
\end{aligned}
\end{equation}
Therefore, as a consequence, we have
\begin{equation} \label{lower-v}
\inf_{\left[-\frac{1}{\e},\frac{1}{\e}\right]} \abs{\vtil'(\x)} \ge C^{-1}\e^2.
\end{equation}
It also holds that \(\abs{\vtil'} \sim \abs{\htil'}\) for all \(\x\in\RR\). More explicitly, we have the following:
\begin{equation} \label{ratio-vh}
\abs{\s_* \vtil'(\x) + \htil'(\x)} \le C \e \quad \forall \x \in \RR,
\end{equation}
where \(\s_* \coloneqq 1/v_-\) which satisfies
\begin{equation} \label{sm1}
\abs{\s_\e-\s_*} \le C \e.
\end{equation}
\end{lemma}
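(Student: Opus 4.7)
The plan is to reduce the entire lemma to a single scalar ODE for the shock profile. From the continuity equation of \eqref{shock_0} (with $\n=1$), integration gives $\util = u_- - \s_\e(\vtil-v_-)$; substituting this into the momentum equation, integrating once from $-\infty$ (using the boundary conditions and $\vtil'(-\infty)=0$), and invoking the Rankine-Hugoniot identity $\s_\e^2 = 1/(v_+v_-)$ yields the closed scalar ODE
\[
\vtil' = \frac{(\vtil - v_-)(v_+ - \vtil)\,\vtil^{\a}}{\s_\e\, v_- v_+},
\]
which is of logistic type between $v_- < v_+$ (hence $\vtil' > 0$) and is explicitly solvable when $\a=0$.

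The main task is \eqref{tail}, which I will establish by matching the midpoint value with the linearized decay at the tails. At $\x=0$ the ODE immediately gives $\vtil'(0)\sim\e^2$ since the numerator equals $(v_+-v_-)^2\vtil(0)^\a/4$ and $|v_+-v_-|\sim\e$. For $\x\to-\infty$, setting $w=\vtil-v_-$ rewrites the ODE as
\[
(\log w)' = \frac{(v_+-\vtil)\,\vtil^{\a}}{\s_\e v_- v_+},
\]
whose right-hand side lies between two positive constant multiples of $\e$ uniformly in $\x\le 0$, since $\vtil\in[v_-,(v_-+v_+)/2]$ stays in a fixed compact interval bounded away from $0$. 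A Gronwall comparison then pins $w$ between $c\e\,e^{c_1\e\x}$ and $c\e\,e^{c_2\e\x}$, and plugging this back into the ODE produces the two-sided bound $\vtil'\sim\e^2 e^{c_j\e\x}$. The symmetric analysis at $v_+$ completes \eqref{tail}, and \eqref{lower-v} is immediate since $e^{-C_1\e\cdot(1/\e)}=e^{-C_1}$ is an absolute constant. The key technical obstacle is maintaining uniformity in $\a\in[0,1]$; this is resolved by the observation that $\vtil^\a$ is pinched between two positive constants depending only on $v_-$, so the constants $c_1,c_2$ and the prefactor are uniform in $\a$.

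The remaining estimates are then immediate consequences. For \eqref{sm1}, Rankine-Hugoniot with $p(v)=1/v$ gives $\s_\e = 1/\sqrt{v_+v_-}$, hence
\[
|\s_\e - \s_*| = \Big|\frac{1}{\sqrt{v_+v_-}} - \frac{1}{v_-}\Big| = \frac{|v_+-v_-|}{v_-\sqrt{v_+}(\sqrt{v_+}+\sqrt{v_-})} \le C|v_+-v_-| \le C\e.
\]
For \eqref{ratio-vh}, integrating the second equation of \eqref{shock} yields $\htil' = p(\vtil)'/\s_\e = -\vtil'/(\s_\e\vtil^2)$, so
\[
\s_*\vtil' + \htil' = \vtil'\Big(\s_* - \frac{1}{\s_\e\vtil^2}\Big).
\]
Since $\vtil\in[v_-,v_+]$ with $v_+-v_-=O(\e)$ and $\s_\e=\s_*+O(\e)$ by \eqref{sm1}, the parenthetical factor is $O(\e)$; combined with $|\vtil'|\le C\e^2$ from \eqref{tail}, this gives $|\s_*\vtil'+\htil'|\le C\e^3 \le C\e$.
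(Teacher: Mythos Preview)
Your proposal is correct. The paper itself omits the proof entirely, citing \cite[Lemma~2.1]{KV21}, so there is no in-paper argument to compare against; your derivation of the scalar logistic-type ODE from \eqref{shock_0}, the Gronwall comparison on $\log(\vtil-v_\pm)$ for the two-sided exponential bound \eqref{tail}, and the algebraic checks of \eqref{sm1} and \eqref{ratio-vh} are the standard route and are carried out cleanly, with uniformity in $\a\in[0,1]$ handled exactly as you indicate. One small bonus worth noting: your computation for \eqref{ratio-vh} in fact yields the sharper bound $|\s_*\vtil'+\htil'|\le C\e\,|\vtil'|\le C\e^3$, which is what actually justifies the comparability $|\vtil'|\sim|\htil'|$ asserted in the lemma (the stated bound $C\e$ alone would not, since $|\vtil'|$ can be much smaller than $\e$).
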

\begin{proof}
Since the proof is essentially the same as in \cite[Lemma 2.1]{KV21}, we omit the proof.
\end{proof}

The exponential decay of the viscous shock derivatives allows us to control bad terms in Section \ref{sec:out}, which is manifested in the following lemma. 
\begin{lemma}\label{lemma_pushing}
There exists a constant \(C>0\) such that for a smooth function \(f\colon \RR\to \RR\) with 
\(
\int_\RR \vtil'(\x) \abs{f(\x)} d\x <\infty,
\)
the following holds: for any \(\x_0\in[-1/\e, 1/\e]\), 
\[
\abs{\int_\RR \vtil'(\x) \int_{\x_0}^{\x} f(\z) d\z d\x} 
\le \frac{C}{\e}\int_\RR \vtil'(\z) \abs{f(\z)} d\z.
\]
\end{lemma}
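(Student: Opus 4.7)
The plan is to reduce the double integral to a single one by swapping the order of integration, then compare the resulting kernel pointwise with $\vtil'$. Setting $F(\x) \coloneqq \int_{\x_0}^\x f(\z)\,d\z$, so $F(\x_0)=0$, I would integrate by parts on each half-line, using $\vtil - v_+$ as the vanishing antiderivative of $\vtil'$ on $[\x_0,\infty)$ and $\vtil - v_-$ on $(-\infty,\x_0]$. Adding the two contributions yields the identity
\begin{align*}
\int_\RR \vtil'(\x) F(\x)\,d\x &= \int_{\x_0}^\infty (v_+ - \vtil(\z))\, f(\z)\,d\z - \int_{-\infty}^{\x_0}(\vtil(\z) - v_-)\, f(\z)\,d\z.
\end{align*}
Hence it suffices to establish the pointwise comparisons $v_+ - \vtil(\z) \le (C/\e)\vtil'(\z)$ for $\z \ge \x_0$ and $\vtil(\z) - v_- \le (C/\e)\vtil'(\z)$ for $\z \le \x_0$; the desired bound then follows from the triangle inequality.

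The heart of the argument is this pointwise comparison, which I would prove by splitting into two regimes. In the moderate range $|\z| \le 1/\e$, the comparison is immediate: by \eqref{lower-v} one has $\vtil'(\z) \ge C^{-1}\e^2$, while $|v_\pm - \vtil(\z)| \le v_+ - v_- = O(\e)$, so the ratio is $O(1/\e)$. Crucially, this regime absorbs the role of the hypothesis $\x_0 \in [-1/\e,1/\e]$, ensuring both endpoints of the kernel lie where the lower bound \eqref{lower-v} is available. In the tail regime $|\z| > 1/\e$, I would integrate the upper bound in \eqref{tail} to obtain, for instance for $\z > 1/\e$,
\begin{align*}
v_+ - \vtil(\z) = \int_\z^\infty \vtil'(\x)\,d\x \le \frac{C\e}{C_2}\, e^{-C_2 \e \z},
\end{align*}
and then compare with the lower bound $\vtil'(\z) \ge C^{-1}\e^2 e^{-C_1\e \z}$ from \eqref{tail}. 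Matching the exponential rates yields the factor $1/\e$. The case $\z < -1/\e$ is symmetric.

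The main obstacle is ensuring matched exponential rates in the tail comparison, i.e.\ that effectively $C_1 = C_2$ in \eqref{tail}. This reflects the fact that, for small-amplitude viscous shocks, the profile ODE obtained by integrating \eqref{shock} linearizes near the end states $v_\pm$, so both $v_\pm - \vtil$ and $\vtil'$ decay exponentially at the \emph{same} rate of order $\e$, with ratio exactly of order $1/\e$. When this refinement is already built into Lemma~\ref{lem-VS}, the pointwise inequality follows at once; otherwise it can be read off directly from the first-order ODE for $\vtil$ by examining its linearizations at $v_\pm$, exactly as in \cite{KV21}. Once this comparison is in hand, Steps~1 and~2 combine immediately into the claimed estimate.
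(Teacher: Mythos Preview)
Your approach is correct and is the standard one: Fubini (equivalently, the integration by parts you describe) reduces the estimate to the pointwise kernel comparisons $v_+ - \vtil(\z) \le (C/\e)\,\vtil'(\z)$ for $\z \ge \x_0$ and $\vtil(\z) - v_- \le (C/\e)\,\vtil'(\z)$ for $\z \le \x_0$. The paper itself gives no proof of this lemma, simply deferring to \cite[Lemma~4.2]{EEK-BNSF}, so there is nothing in the present text to compare against.

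One remark that sharpens your argument and eliminates the $C_1$ versus $C_2$ issue you flag: no splitting into moderate and tail regimes is needed. Integrating the shock ODE once (as in the proof of Lemma~\ref{lem-Jacobian}) and using $p(v)=1/v$ together with the exact relation $\s_\e^2 = 1/(v_- v_+)$, one obtains the closed form
\[
\vtil'(\x) \;=\; \frac{\vtil(\x)^{\a}}{\s_\e\, v_- v_+}\,\bigl(v_+ - \vtil(\x)\bigr)\bigl(\vtil(\x) - v_-\bigr),\qquad \x\in\RR,
\]
so that $\dfrac{v_+-\vtil(\z)}{\vtil'(\z)} = \dfrac{\s_\e\, v_- v_+}{\vtil(\z)^{\a}\,(\vtil(\z)-v_-)}$. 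For $\z\ge\x_0\ge -1/\e$ the lower bound in \eqref{tail} gives $\vtil(\z)-v_-\ge \vtil(-1/\e)-v_- \ge c\e$, and the $C/\e$ bound follows at once; the half-line $\z\le\x_0$ is symmetric. This is precisely the ODE route you allude to at the end, but it makes the comparison exact rather than relying on matched asymptotic rates.
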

\begin{proof}
The proof can be found in \cite[Lemma 4.2]{EEK-BNSF}.
\end{proof}

\subsection{Relative entropy method}
Our analysis is based on the relative entropy method, which was first introduced by Dafermos \cite{Dafermos79-ARMA} and Diperna \cite{Diperna79}. It is purely nonlinear, and allows us to handle rough and large perturbations.

To use the relative entropy method, we rewrite \eqref{main} into the following general system of viscous conservation laws:
\begin{equation} \label{VCL}
\rd_t U + \rd_\x A(U) =
\begin{pmatrix}
-\rd_\x(v^\b \rd_\x p(v)) \\
0
\end{pmatrix}
\end{equation}
where 
\[
U\coloneqq
\begin{pmatrix}
v \\ h
\end{pmatrix}, \quad
A(U)\coloneqq
\begin{pmatrix}
-\s_\e v - h \\ -\s_\e h + p(v)
\end{pmatrix}.
\]
The system \eqref{VCL} has a convex entropy \(\et(U)\coloneqq \frac{h^2}{2}-\log v\).\\
Using the derivative of the entropy as
\[
\nabla \et(U) =
\begin{pmatrix}
-p(v) \\ h
\end{pmatrix},
\]
the system above \eqref{VCL} can be rewritten as
\begin{equation} \label{VCL'}
\rd_t U + \rd_\x A(U) = \rd_\x (M(U)\rd_\x \nabla \et(U)),
\end{equation}
where \(M(U)=\begin{pmatrix} v^\b && 0 \\ 0 && 0 \end{pmatrix}\), and \eqref{shock} can be rewritten as
\begin{equation} \label{VCLshock}
\rd_\x A(\Util) = \rd_\x \left(M(\Util)\rd_\x\nabla \et(\Util)\right).
\end{equation}
Consider the relative entropy function defined by 
\[
\et(U|V) = \et(U)-\et(V)-\nabla \et(V)(U-V),
\]
which is equivalent to \eqref{eta_def}.

We consider a weighted relative entropy between the solution \(U\) of \eqref{VCL'} and the viscous shock \(\Util \coloneqq \begin{pmatrix} \vtil \\ \htil \end{pmatrix}\) in \eqref{shock} (or equivalently, \eqref{VCLshock}) up to a shift \(X(t)\):
\[
a(\x) \et (U(t,\x+X(t)|\Util(\x)))
\]
where \(a\) is a smooth weight function.
The shift \(X\) and weight \(a\) will be determined later.

In Lemma \ref{lem-rel}, we will derive a quadratic structure on \(\frac{d}{dt}\int_\RR a(\x)\et(U(t,\x+X(t)|\Util))d\x\).
For this purpose, we introduce a simple notation: for any function \(f \colon \RR^+ \times \RR \to \RR\) and the shift \(X(t)\), 
\[
f^{\pm X}(t,\x) \coloneqq f(t,\x \pm X(t)).
\]
We also introduce the function space:
\[
\Hcal \coloneqq \{(v,h) \mid  v^{-1}, v \in L^{\infty}(\RR),~ h-\htil \in L^2(\RR), \rd_\x \left(p(v)-p(\vtil)\right) \in L^2(\RR) \},
\]
on which the functionals \(Y, \Jcal^{bad},\Jcal^{para}, \Jcal^{good}\) in \eqref{ybg-first} are well-defined for all \(t\in (0,T)\).

\begin{remark}
    As mentioned in Remark \ref{rem-vh}, we consider the solution \((v,h) \in \Hcal_T\) to \eqref{main}. Then, using \(v_\x \in C(0,T;L^2(\RR))\) and \(v^{-1}, v \in C(0,T;L^\infty(\RR))\), we obtain
    \[
        \rd_\x \left(p(v)-p(\vtil)\right) \in C(0,T;L^2(\RR)), 
    \]
    which implies that \((v,h)(t) \in \Hcal\) for all \(t \in [0,T]\).
\end{remark}

\begin{lemma}\label{lem-rel}
Let \(a\colon\RR\to\RR^+\) be any positive smooth bounded function whose derivative is bounded and integrable.
Let \(\Ut\coloneqq (\vtil,\htil)\) be the viscous shock in \eqref{shock}.
For any solution \(U\in\Hcal_T\) to \eqref{VCL}, and any absolutely continuous shift \(X \colon [0,T] \to \RR\), the following holds. \\
\begin{equation}\label{ineq-0}
\begin{aligned}
&\frac{d}{dt}\int_{\RR} a(\x)\eta(U^X(t,\x)|\Ut(\x)) d\x \\
&\qquad\qquad\qquad
=\dot X(t) Y(U^X) + \Jcal^{bad}(U^X) + \Jcal^{para}(U^X) - \Jcal^{good}(U^X),
\end{aligned}
\end{equation}
where
\begin{equation}
\begin{aligned}\label{ybg-first}
Y(U) &\coloneqq -\int_\RR a'\et(U|\Util)d\x + \int_\RR a \rd_\x \nabla \et(\Util)(U-\Util) d\x, \\
\Jcal^{bad}(U) &\coloneqq \int_\RR a' (p(v)-p(\vtil))(h-\htil) d\x
+ \s_\e \int_\RR a \vtil' p(v|\vtil) d\x, \\
\Jcal^{para}(U) &\coloneqq 
- \int_\RR a' v^\b (p(v)-p(\vtil)) \rd_\x(p(v)-p(\vtil)) d\x \\
&\qquad
- \int_\RR a'(p(v)-p(\vtil)) (v^\b-\vtil^\b) \rd_\x p(\vtil) d\x \\
&\qquad
- \int_\RR a \rd_\x(p(v)-p(\vtil)) (v^\b-\vtil^\b) \rd_\x p(\vtil) d\x, \\
\Jcal^{good}(U) &\coloneqq \s_\e\int_\RR a' \Phif{v}{\vtil} d\x
+ \frac{\s_\e}{2}\int_\RR a' (h-\htil)^2 d\x
+ \int_\RR a v^\b \abs{\rd_\x (p(v)-p(\vtil))}^2 d\x.
\end{aligned}
\end{equation}
\end{lemma}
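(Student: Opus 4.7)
The plan is to differentiate $\int a\,\eta(U^X|\Ut)\,d\x$ in time via the chain rule, substitute $\partial_t U^X$ using the shifted form of \eqref{VCL'} together with the shift correction $\dot X\partial_\x U^X$, and then subtract the stationary shock identity \eqref{VCLshock} (which is zero) paired against $\nabla\eta(U^X)-\nabla\eta(\Ut)$. This splits the time derivative into three blocks whose tails vanish at $\pm\infty$, and a careful integration by parts against $a$ will assemble them into $\dot X\,Y + \Jcal^{bad} + \Jcal^{para} - \Jcal^{good}$.

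Concretely, since $\Ut$ is $t$-independent, $\partial_t\eta(U^X|\Ut)=(\nabla\eta(U^X)-\nabla\eta(\Ut))\cdot\partial_t U^X$, and the chain rule together with \eqref{VCL'} gives $\partial_t U^X + \partial_\x A(U^X) = \partial_\x(M(U^X)\partial_\x\nabla\eta(U^X)) + \dot X\partial_\x U^X$. Subtracting the zero quantity $-\partial_\x A(\Ut)+\partial_\x(M(\Ut)\partial_\x\nabla\eta(\Ut))$ and integrating yields
\begin{align*}
\frac{d}{dt}\int a\,\eta(U^X|\Ut)\,d\x
&= -\int a\,(\nabla\eta(U^X)-\nabla\eta(\Ut))\cdot\partial_\x(A(U^X)-A(\Ut))\,d\x\\
&\quad + \int a\,(\nabla\eta(U^X)-\nabla\eta(\Ut))\cdot\partial_\x\!\bigl(M(U^X)\partial_\x\nabla\eta(U^X)-M(\Ut)\partial_\x\nabla\eta(\Ut)\bigr)\,d\x\\
&\quad + \dot X\int a\,(\nabla\eta(U^X)-\nabla\eta(\Ut))\cdot\partial_\x U^X\,d\x,
\end{align*}
which I treat block by block.

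The shift block reduces to $Y(U^X)$ via the pointwise identity $(\nabla\eta(U^X)-\nabla\eta(\Ut))\cdot\partial_\x U^X = \partial_\x\eta(U^X|\Ut) + \partial_\x\nabla\eta(\Ut)(U^X-\Ut)$ and one integration by parts. For the hyperbolic block, set $P:=p(v^X)-p(\vt)$, $H:=h^X-\ht$, $V:=v^X-\vt$; expanding the inner product with $\nabla\eta=(-p(v),h)^T$ and $A=(-\s_\e v-h,-\s_\e h+p(v))^T$ gives
\[
(\nabla\eta(U^X)-\nabla\eta(\Ut))\cdot\partial_\x(A(U^X)-A(\Ut)) = \s_\e PV_\x + \partial_\x(PH) - \tfrac{\s_\e}{2}\partial_\x(H^2).
\]
The two exact derivatives integrate by parts to $\int a'PH\,d\x - \tfrac{\s_\e}{2}\int a'H^2\,d\x$, while the nonconservative $\s_\e PV_\x$ is transformed by the crucial algebraic identity
\[
PV_\x = -\partial_\x\Phi(v^X/\vt) - \vt_\x\,p(v^X|\vt)
\]
(valid because $p(v)=1/v$, $\Phi'(z)=1-1/z$ and $p'(\vt)=-p(\vt)^2$) into $-\s_\e\int a'\Phi(v^X/\vt)\,d\x + \s_\e\int a\,\vt_\x p(v^X|\vt)\,d\x$; together these are precisely the hyperbolic contributions to $-\Jcal^{good}+\Jcal^{bad}$. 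The parabolic block is analogous: the vector $M(U^X)\partial_\x\nabla\eta(U^X)-M(\Ut)\partial_\x\nabla\eta(\Ut)$ has only its first slot nonzero, equal to $-v^\b P_\x - (v^\b-\vt^\b)\partial_\x p(\vt)$; pairing with $-P$ (the first slot of $\nabla\eta(U^X)-\nabla\eta(\Ut)$) and integrating by parts against $a$ produces $-\int a v^\b|P_\x|^2\,d\x + \Jcal^{para}(U^X)$, completing the identification asserted in \eqref{ineq-0}.

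The main obstacles are bookkeeping rather than conceptual. First, the identity $PV_\x = -\partial_\x\Phi(v/\vt)-\vt_\x p(v|\vt)$ is specific to the isothermal pressure $p(v)=1/v$ and must be derived by a short direct computation; it plays the role of the corresponding identity underpinning the isentropic $a$-contraction. Second, every integration by parts requires the boundary values at $\pm\infty$ to vanish, which is justified because $U\in\Hcal$ places $v-\vt$, $h-\ht$ and $\partial_\x(p(v)-p(\vt))$ in $L^2(\RR)$ with $v, v^{-1}\in L^\infty$, while Lemma \ref{lem-VS} supplies the exponential decay of $\vt'$ and $\ht'$ that handles the $\Ut$-dependent tail contributions.
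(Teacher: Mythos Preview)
Your proposal is correct and follows essentially the same relative-entropy computation as the paper's (commented-out) proof, which in turn defers to \cite[Lemma 4.2]{KV-Inven}. The only organizational difference is that the paper first changes variables $\x\mapsto \x-X(t)$ (shifting $a$ and $\Ut$ rather than $U$) and then invokes the abstract $I_1,\dots,I_5$ decomposition with the cancellation $I_2+I_5=\s_\e\int a^{-X}\vt_\x'^{-X}p(v|\vt^{-X})\,d\x$, whereas you keep the shift on $U$ and compute the three blocks directly; your pointwise identity $PV_\x=-\partial_\x\Phi(v/\vt)-\vt_\x p(v|\vt)$ is exactly the isothermal incarnation of that cancellation.
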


\begin{remark}
In what follows, we will define the weight function \(a\) such that \(\s_\e a' > 0\).
Therefore, \(-\Jcal^{good}\) consists of three good terms, while \(\Jcal^{bad}\) and \(\Jcal^{para}\) consist of bad terms.
\end{remark}

\begin{proof}
The proof follows the same argument as in the isentropic case.
Thus, we refer to \cite[Lemma 4.2]{KV-Inven} for details and omit the proof here.
\end{proof}

\subsection{Construction of the weight function}
We define the weight function \(a\) by
\begin{equation}\label{weight-a}
a(\xi)=1 - \frac{\lambda}{\e} \big(p(\vtil(\x))-p(v_-)\big),
\end{equation}
where \(\l>0\) is its total variation of \(a\).
We present some useful properties on the weight \(a\).
First of all, the weight function \(a\) is positive and increasing, and satisfies \(1 \le a \le 1+\l\).
Since \(p'(v_-) \le p'(\vtil) \le p'(2v_-)\) and
\begin{equation} \label{der-a}
a'(\x) = -\l \frac{p(\vtil)'}{[p]},
\end{equation}
it holds from \eqref{tail} and \eqref{ratio-vh} that 
\begin{equation} \label{der-scale}
\abs{a'} \sim \frac{\l}{\e}\abs{\vtil'} \sim \frac{\l}{\e}\abs{\htil'}, \qquad 
\abs{a'(\x)} \le C\e\l e^{-C\e\abs{\x}} \quad \text{ for all } \x\in\RR.
\end{equation}

\subsection{Maximization in terms of \(h-\htil\)}\label{sec:mini}
In order to estimate the right-hand side of \eqref{ineq-0}, we will use Proposition \ref{prop:main3}, which provides a sharp estimate with respect to \(p(v)-p(\vtil)\) when \(\abs{p(v)-p(\vtil)} \ll 1\). This requires to rewrite \(\Jcal^{bad}\) on the right-hand side of \eqref{ineq-0} only in terms of \(p(v)\) near \(p(\vtil)\), separating \(h-\htil\) from the first term of \(\Jcal^{bad}\).
Therefore, we will rewrite \(\Jcal^{bad}\) into the maximized representation in terms of \(h-\htil\) in the following lemma.
However, we will retain all terms of \(\Jcal^{bad}\) in a region \(\{p(v)-p(\vtil) > \d\}\) for small values of \(v\), since we use the estimate \eqref{bo1m} to control the first term of \(\Jcal^{bad}\) in that region.

\begin{lemma}\label{lem-max}
Let \(a\colon \RR\to\RR^+\) be as in \eqref{weight-a}, and \(\Ut=(\vtil, \htil)\) be the viscous shock in \eqref{shock}. Let \(\d>0\) be any constant.
Then, for any \(U\in \Hcal\), 
\begin{equation}\label{ineq-1}
\Jcal^{bad} (U) -\Jcal^{good} (U)= \Bcal_\d(U)- \Gcal_\d(U),
\end{equation}
where
\begin{equation}
\begin{aligned}\label{badgood}
\Bcal_\d(U) &\coloneqq \frac{1}{2\s_\e}\int_\RR a'(p(v)-p(\vtil))^2\one{p(v)\le p(\vtil)+\d} d\x
+\s_\e \int_\RR a \vtil' p(v|\vtil) d\x \\
&\qquad
+\int_\RR a'(p(v)-p(\vtil))(h-\htil)\one{p(v)>p(\vtil)+\d} d\x \\
\Gcal_\d(U) &\coloneqq \frac{\s_\e}{2}\int_\RR a'\left(h-\htil-\frac{p(v)-p(\vtil)}{\s_\e}\right)^2 \one{p(v)\le p(\vtil)+\d} d\x \\
&\qquad
+\frac{\s_\e}{2}\int_\RR a'(h-\htil)^2 \one{p(v)>p(\vtil)+\d} d\x \\
&\qquad
+\s_\e\int_\RR a' \Phif{v}{\vtil}d\x
+\int_\RR a v^\b \abs{\rd_\x (p(v)-p(\vtil))}^2 d\x.
\end{aligned}
\end{equation}
\end{lemma}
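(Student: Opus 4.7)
The identity is an algebraic rearrangement. The plan is to split every integrand in $\Jcal^{bad}-\Jcal^{good}$ according to the indicators $\one{p(v)\le p(\vtil)+\d}$ and $\one{p(v)>p(\vtil)+\d}$, and then complete the square in $(h-\htil)$ on the region where $p(v)\le p(\vtil)+\d$. The terms involving $\int a\vtil'p(v|\vtil)\,d\x$, $\int a'\Phi(v/\vtil)\,d\x$ and $\int av^\b|\rd_\x(p(v)-p(\vtil))|^2\,d\x$ do not involve $h-\htil$, so they pass directly from the difference $\Jcal^{bad}-\Jcal^{good}$ to $\Bcal_\d-\Gcal_\d$ without modification; similarly, the term $\int a'(p(v)-p(\vtil))(h-\htil)\one{p(v)>p(\vtil)+\d}\,d\x$ and the contribution $\frac{\s_\e}{2}\int a'(h-\htil)^2\one{p(v)>p(\vtil)+\d}\,d\x$ appear unchanged in $\Bcal_\d$ and $\Gcal_\d$.

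The only nontrivial step is thus on the region $\{p(v)\le p(\vtil)+\d\}$. There we must show
\begin{equation*}
a'(p(v)-p(\vtil))(h-\htil)-\frac{\s_\e}{2}a'(h-\htil)^2
= \frac{1}{2\s_\e}a'(p(v)-p(\vtil))^2-\frac{\s_\e}{2}a'\Bigl(h-\htil-\frac{p(v)-p(\vtil)}{\s_\e}\Bigr)^2,
\end{equation*}
which is a direct completion-of-squares: expanding the squared term on the right gives
\begin{equation*}
\Bigl(h-\htil-\frac{p(v)-p(\vtil)}{\s_\e}\Bigr)^2=(h-\htil)^2-\frac{2(p(v)-p(\vtil))(h-\htil)}{\s_\e}+\frac{(p(v)-p(\vtil))^2}{\s_\e^2},
\end{equation*}
and multiplying by $-\frac{\s_\e}{2}a'$ and adding $\frac{1}{2\s_\e}a'(p(v)-p(\vtil))^2$ recovers the left-hand side. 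Integrating this identity against $\one{p(v)\le p(\vtil)+\d}\,d\x$ produces exactly the first line of $\Bcal_\d$ and the first line of $\Gcal_\d$.

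Assembling the four blocks (the two regions for the cross term $\int a'(p(v)-p(\vtil))(h-\htil)\,d\x$, the two regions for $\frac{\s_\e}{2}\int a'(h-\htil)^2\,d\x$, and the three terms independent of the split) and matching them to the definitions \eqref{badgood} yields \eqref{ineq-1}. There is no real obstacle here: the integrability needed to split and recombine is guaranteed because $a'$ is bounded with compact-like exponential decay by \eqref{der-scale}, $h-\htil\in L^2(\RR)$, and $p(v)-p(\vtil)$ is controlled pointwise on the set $\{p(v)\le p(\vtil)+\d\}$, so every integral appearing on both sides is well-defined for $U\in\Hcal$. Because $\s_\e>0$ for a $2$-shock (see the remark after Theorem \ref{thm_main3}), the completion of squares is valid and the signs in $\Bcal_\d$ and $\Gcal_\d$ come out as stated. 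The point of keeping $\int a'(p(v)-p(\vtil))(h-\htil)\one{p(v)>p(\vtil)+\d}\,d\x$ unmaximized is that, as noted in the text, the large-$|p(v)-p(\vtil)|$ region will be controlled later by a different estimate rather than by Young's inequality against $a'(h-\htil)^2$.
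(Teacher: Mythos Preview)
Your proof is correct and follows the same approach as the paper (which simply cites \cite[Lemma 4.3]{KV-Inven}): split the cross term and the quadratic $(h-\htil)^2$ term by the indicator, complete the square on the region $\{p(v)\le p(\vtil)+\d\}$, and pass the remaining terms through unchanged. The integrability discussion is adequate; note in particular that $p(v)=1/v$ is bounded since $v^{-1}\in L^\infty$ for $U\in\Hcal$, so every integrand is controlled by $a'$ against an $L^2$ or $L^\infty$ factor.
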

\begin{remark}\label{rem:0}
Since $\s_\e a' >0$, $-\Gcal_\d$ consists of four good terms. 
\end{remark}

\begin{proof}
The details of the proof can be found in \cite[Lemma 4.3]{KV-Inven}.
\end{proof}

\subsection{Main proposition}
The main proposition for the proof of Theorem \ref{thm_main3} is as follows.

\begin{proposition}\label{prop:main}
There exist constants \(\e_0,\d_0,\d_3 \in (0,1/2)\) such that for any \(\e<\e_0\) and \(\d_0^{-1}\e<\l<\d_0\), the following statement holds. \\
For any \(U \in \Hcal \cap \{U \mid \abs{Y(U)}\le\e^2\}\),
\begin{align*}
\Rcal(U)&\coloneqq -\frac{1}{\e^4}Y^2(U) +\Bcal_{\d_3}(U)+\d_0\frac{\e}{\l} \abs{\Bcal_{\d_3}(U)} + \d_0\frac{\e}{\l}\Bcal_1^+(U) + \Jcal^{para}(U) + \d_0\abs{\Jcal^{para}(U)} \\
&\qquad -\Gcal_h^-(U)-\frac{1}{2}\Gcal_h^+(U) -\left(1-\d_0\frac{\e}{\l}\right)\Gcal_v(U) -(1-\d_0)\Dcal(U) \le 0,
\end{align*}
where \(Y\), \(\Bcal_{\d_3}\) and \(\Jcal^{para}\) are as in \eqref{ybg-first} and \eqref{badgood}, \(\Bcal_1^+\) denotes 
\[
\Bcal_1^+(U) \coloneqq \frac{1}{2\s_\e}\int_\RR a'(p(v)-p(\vtil))^2 \one{p(v)\le p(\vtil)+\d_3} d\x,
\]
and \(\Gcal_h^-, \Gcal_h^+, \Gcal_v\) and \(\Dcal\) denote the four terms of \(\Gcal_{\d_3}\) as follows:
\begin{equation}
\begin{aligned}\label{ggd}
&\Gcal_h^-(U)\coloneqq \frac{\s_\e}{2}\int_\RR a'(h-\htil)^2 \one{p(v)>p(\vtil)+\d_3} d\x,\\
&\Gcal_h^+(U)\coloneqq \frac{\s_\e}{2}\int_\RR a'\Big(h-\htil-\frac{p(v)-p(\vtil)}{\s_\e}\Big)^2 \one{p(v)\le p(\vtil)+\d_3} d\x ,\\
&\Gcal_v(U)\coloneqq \s_\e\int_\RR a' \Phif{v}{\vtil} d\x, \qquad
\Dcal(U)\coloneqq \int_\RR a v^\b \abs{\rd_\x (p(v)-p(\vtil))}^2 d\x.
\end{aligned}
\end{equation}
\end{proposition}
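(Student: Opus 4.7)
The plan is to establish the pointwise functional inequality $\Rcal(U)\le 0$ by a region-based decomposition combined with a dichotomy on the size of $\Dcal(U)$. I would first split the integrand via the cut-off $\one{|p(v)-p(\vtil)|\le \d_1}$ for an intermediate threshold $\d_1\in(\d_3,1)$ and treat the ``far'' region where $v$ is not close to the profile separately. In this region Lemma \ref{lem-pro} upgrades $\Phi(v/\vtil)$ to a term linear in $|v-\vtil|$, and together with the pointwise bound $p(v|\vtil)\le C\,\Phi(v/\vtil)$ and the exponential decay of $a'$ and $\vtil'$ from \eqref{der-scale} and \eqref{tail}, the good term $\Gcal_v$ (of size $\e\l$) absorbs all far contributions of $\Bcal_{\d_3}$ and $\Jcal^{para}$, as well as the far piece paired against $\Gcal_h^-$ via Cauchy--Schwarz. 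No information from $Y$ is used at this stage.

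On the inner region $\{|p(v)-p(\vtil)|\le \d_1\}$ I would change variable to $w:=p(v)-p(\vtil)$ and use the Taylor expansions \eqref{Phi-est1}--\eqref{p-est1} to rewrite the principal functionals as explicit quadratic forms in $w$: $\Gcal_v\approx \frac{\s_\e}{2p(\vtil)}\int a'\,w^2\,d\x$, $\Bcal_1^+\approx \frac{1}{2\s_\e}\int a'\,w^2\,d\x$, $\s_\e\int a\,\vtil'\,p(v|\vtil)\approx \frac{\s_\e}{p(\vtil)}\int a\,\vtil'\,w^2\,d\x$, and $\Dcal\approx \int a\,v^\b\,|w_\x|^2\,d\x$ modulo higher-order remainders. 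The dominant bad piece of $\Jcal^{para}$ integrates by parts against $a v^\b\rd_\x p(\vtil)$ to yield one half of $\Dcal$ up to a term of order $\l$, which accounts for the prefactor $(1-\d_0)$ in front of $\Dcal$ and allows the $\Gcal_v$ prefactor to be relaxed to $(1-\d_0\e/\l)$ to absorb the Taylor remainders. Then I would perform a dichotomy: if $\Dcal(U)$ exceeds a sufficiently large multiple of $\Gcal_v(U)$, $\Dcal$ alone dominates everything remaining, using the pointwise bound $\Bcal_1^+ \le C(\e/\l)^2\Gcal_v$ that follows from \eqref{der-scale}. Otherwise, $w$ is essentially rigid on the support of $a'$, and the problem reduces to a near-one-dimensional analysis.

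The main obstacle is the sharp Poincar\'e-type estimate needed in this last case, where the linear constraint $|Y(U)|\le \e^2$ must be combined with the quadratic penalty $-Y^2/\e^4$ to close the inequality against $\Bcal_1^+$ and the $\s_\e\int a\,\vtil'\,p(v|\vtil)$ term. Using the representation of $Y$ in \eqref{ybg-first} and the fact that $\rd_\x\nabla\eta(\vtil)$ is controlled by $\vtil'$, one checks that $Y$ extracts, to leading order, the projection of $w$ onto $p(\vtil)'$. Decomposing $w=c\,q(\x)+w^\perp$ with $q:=-p(\vtil)'$ suitably normalized, the $-Y^2/\e^4$ term produces a coercive quadratic penalty on $c$ with precisely the coefficient needed to defeat the scalar part of $\Bcal_1^+$ once the explicit formula \eqref{weight-a} and the scalings \eqref{tail}--\eqref{lower-v} are used, while a weighted Poincar\'e inequality controls $w^\perp$ by $\Dcal$. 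After this sharp estimate is in hand, one chooses $\d_3$ small to justify the Taylor expansions, and then $\d_0,\e_0$ small in this order so that all residual errors (including those from Lemma \ref{lemma_pushing}) fit into the margins $\d_0$ and $\d_0\e/\l$; assembling the inner and outer bounds yields $\Rcal(U)\le 0$ throughout $\Hcal\cap\{|Y|\le\e^2\}$.
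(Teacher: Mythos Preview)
Your outer--inner decomposition and the dichotomy on $\Dcal$ are in the right spirit, but two of the steps you take for granted actually fail, and a third misses the structure that the proof hinges on.

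\textbf{The far region cannot be absorbed by $\Gcal_v$ alone.} The pointwise bound $p(v|\vtil)\le C\,\Phi(v/\vtil)$ is false when $v$ is small: for $p(v)=1/v$ one has $p(v|\vtil)=(v-\vtil)^2/(v\vtil^2)\sim 1/v$ as $v\to 0$, while $\Phi(v/\vtil)\sim |\log v|$. Likewise, on $\O^c=\{p(v)>p(\vtil)+\d_3\}$ the factor $|p(v)-p(\vtil)|$ appearing in $\Bcal_1^-$ is unbounded and is \emph{not} controlled by $\Phi(v/\vtil)$. Thus $\Gcal_v$ (even with the linear lower bound from Lemma~\ref{lem-pro}) cannot absorb the $\O^c$-contributions of $\Bcal_2$ and $\Bcal_1^-$. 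In the paper this is precisely why the diffusion $\Dcal$ is indispensable on the outer region: one first uses $|Y|\le\e^2$ to get the localized smallness \eqref{locE} (Lemma~\ref{lemmeC2}), then derives \emph{pointwise} bounds on $v^{\b/2}(p(v)-p(\vbar_b))$ in terms of $\sqrt{\Dcal(U)}$ (Lemma~\ref{lem-pw}), which together with the nonlinearization trick \eqref{NL}--\eqref{powerup} and Lemma~\ref{lemma_pushing} produce the estimates of Proposition~\ref{prop_hyperbolic_out}. Your claim that ``no information from $Y$ is used at this stage'' is therefore untenable.

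\textbf{The inner region is a cubic, not quadratic, problem.} At leading order $\Bcal_1^+$ and $\Gcal_v$ coincide: with $w=p(v)-p(\vtil)$ and $\s_\e^2\approx 1/\vtil^2$ one finds $\Bcal_1^+\approx \frac{1}{2\s_\e}\int a'\,w^2\approx \frac{\s_\e\vtil^2}{2}\int a'\,w^2\approx \Gcal_v$ (your formula for $\Gcal_v$ has a spurious $\s_\e$). After this cancellation, the surviving principal term is the \emph{cubic} $\frac{2}{3}\int_0^1 W^3\,dy$ in the rescaled variable $W=\frac{\l}{\e}w$ (see \eqref{insideb1}), and $Y$ contributes not a linear projection but $\int_0^1(W^2+2W)\,dy$ (see \eqref{insidey}). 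A decomposition $w=cq+w^\perp$ with a \emph{linear} weighted Poincar\'e inequality cannot close this; one needs the genuinely nonlinear Poincar\'e inequality of Proposition~\ref{prop_nl_Poincare}, applied after the change of variable $y=(p(v_-)-p(\vtil))/\e$ and the Jacobian identity of Lemma~\ref{lem-Jacobian}, exactly as in Proposition~\ref{prop:main3}. This is the heart of the argument and is missing from your plan.
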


\subsection{Proof of Theorem \ref{thm_main} from Proposition \ref{prop:main}}
In this subsection, we demonstrate how Proposition \ref{prop:main} implies Theorem \ref{thm_main3}, from which Theorem \ref{thm_main} follows immediately.

For any fixed \(\e>0\), we consider a continuous function \(\Phi_\e\) defined by
\begin{equation}\label{Phi-d}
\Phi_\e (y)=
\left\{
\begin{array}{ll}
\frac{1}{\e^2}, \quad &\text{if } y\le -\e^2, \\
-\frac{1}{\e^4}y, \quad &\text{if } \abs{y}\le \e^2, \\
-\frac{1}{\e^2}, \quad &\text{if } y\ge \e^2.
\end{array} \right.
\end{equation}
Let \(\e_0,\d_0,\d_3\) be the constants in Proposition \ref{prop:main}.
Then, let \(\e,\l\) be any constants such that \(0<\e<\e_0\) and \(\d_0^{-1}\e<\l<\d_0<1/2\).

We define a shift function \(X(t)\) as a solution of the nonlinear ODE:
\begin{equation}\label{X-def}
\left\{
\begin{array}{ll}
\dot X(t) = \Phi_\e (Y(U^X)) \Big(2|\Jcal^{bad}(U^X)|+2\abs{\Jcal^{para}(U^X)}+1\Big),\\
X(0)=0,
\end{array} \right.
\end{equation}
where \(Y\), \(\Jcal^{bad}\) and \(\Jcal^{para}\) are as in \eqref{ybg-first}. \\
Then, for the solution \(U \in \Hcal_T\), there exists a unique absolutely continuous shift \(X\) on \([0,T]\).
Indeed, since \(\vtil',\htil',a'\) are bounded, smooth and integrable, using \(U\in\Hcal_T\) together with the change of variable \(\x \mapsto \x-X(t)\), we find that there exist \(a,b \in L^2(0,T)\) such that
\[
\sup_{x\in\RR} \abs{F(t,x)} \le a(t) \quad ~\text{ and }~ \quad \sup_{x\in\RR} \abs{\rd_x F(t,x)} \le b(t), \quad \forall t \in [0,T],
\]
where \(F(t,X)\) denotes the right-hand side of the ODE in \eqref{X-def}.
For more details on the existence and uniqueness theory of the ODE, we refer to \cite[Lemma A.1]{CKKV}.

Although each term has different meanings in \cite{EEK-BNSF} and the present paper, the form of the inequalities provided in \cite[Proposition 4.1]{EEK-BNSF} and Proposition \ref{prop:main}, as well as the definition of the shift, is highly similar.
Thus, Theorem \ref{thm_main3} can be proved in an analogous way.

Based on \eqref{ineq-0} and \eqref{X-def}, to get the contraction estimate \eqref{cont_main3}, it is enough to prove that for almost every time \(t>0\),
\begin{equation} \label{contem0}
\begin{aligned}
&\Phi_\e(Y(U^X)) \left(2|\Jcal^{bad}(U^X)|+2\abs{\Jcal^{para}(U^X)}+1\right)Y(U^X) \\
&\qquad\qquad\qquad\qquad\qquad\qquad\qquad
+\Jcal^{bad}(U^X) +\Jcal^{para}(U^X) -\Jcal^{good}(U^X) \le 0.
\end{aligned}
\end{equation}
We define
\begin{align*}
\Fcal(U) \coloneqq &\Phi_\e(Y(U)) \left(2|\Jcal^{bad}(U)|+2\abs{\Jcal^{para}(U)}+1\right)Y(U) \\
&\qquad\qquad\qquad\qquad\qquad\qquad\qquad
+\Jcal^{bad}(U) +\Jcal^{para}(U) -\Jcal^{good}(U), \quad \forall U \in \Hcal.
\end{align*}
From \eqref{Phi-d}, we have 
\begin{equation} \label{XY}
\Phi_\e(Y)(2|\Jcal^{bad}|+2\abs{\Jcal^{para}}+1)Y \le
\left\{
\begin{array}{ll}
-2\abs{\Jcal^{bad}}-2\abs{\Jcal^{para}}, \quad \text{if } \abs{Y}\ge\e^2,\\
-\frac{1}{\e^4}Y^2, \quad \text{if } \abs{Y}\le\e^2.
\end{array} \right.
\end{equation}
Then, we proceed in a manner similar to \cite{EEK-BNSF}.
Using \eqref{XY}, \eqref{ineq-1}, Proposition \ref{prop:main} and the fact that \(\d_0<1/2\), and using \eqref{ineq-0} and \eqref{contem0}, we find that for a.e. \(t>0\), 
\begin{equation} \label{111}
\begin{aligned}
&\frac{d}{dt}\int_\RR a\et(U^X|\Ut)d\x + \d_0\frac{\e}{\l}\Gcal_v(U^X)+\d_0\Dcal(U^X)
= \Fcal(U^X)+ \d_0\frac{\e}{\l}\Gcal_v(U^X)+\d_0\Dcal(U^X) \\
&\le -|\Jcal^{bad}(U^X)|\one{\abs{Y(U^X)}\ge \e^2}
-\d_0\frac{\e}{\l}\left(\abs{\Bcal_{\d_3}(U^X)}+\Bcal_1^+(U^X)\right)\one{\abs{Y(U^X)}\le \e^2} \\
&\qquad -\d_0\abs{\Jcal^{para}(U^X)} -\frac{1}{2}\Gcal_h^+(U^X)\one{\abs{Y(U^X)}\le \e^2} \le 0.
\end{aligned}
\end{equation}
Therefore, we have 
\[
\int_\RR a\et(U^X|\Ut)d\x + \d_0\frac{\e}{\l}\int_0^t \Gcal_v(U^X) ds
+ \d_0\int_0^t \Dcal(U^X) ds
\le \int_\RR a\et(U_0|\Ut)d\x < \infty
\]
which completes \eqref{cont_main3}.

To estimate \(\abs{\dot{X}}\), we first observe that \eqref{Phi-d} and \eqref{X-def} to yields
\begin{equation} \label{contx}
\abs{\dot{X}} \le \frac{1}{\e^2} \left(2|\Jcal^{bad}(U^X)|+2\abs{\Jcal^{para}(U^X)}+1\right), \quad \text{ for a.e. } t\in(0,T). 
\end{equation}
Notice that it follows from \eqref{111} and \(1\le a \le 3/2\) by \(\d_0<1/2\) that
\begin{equation} \label{jb-cont}
\begin{aligned}
&\int_0^T \left(|\Jcal^{bad}(U^X)|\one{\abs{Y(U^X)}\ge \e^2} +\d_0\abs{\Jcal^{para}(U^X)} +\Gcal_h^+(U^X)\one{\abs{Y(U^X)}\le \e^2} \right) dt \\
&\qquad
+ \d_0\frac{\e}{\l} \int_0^T \left(\left(\abs{\Bcal_{\d_3}(U^X)}+\Bcal_1^+(U^X)\right)\one{\abs{Y(U^X)}\le \e^2}\right)dt
\le 2\int_\RR \et(U_0|\Ut) d\x.
\end{aligned}
\end{equation}
To estimate \(|\Jcal^{bad}(U^X)|\) globally in time, using \eqref{ineq-1}, \eqref{ybg-first} and \eqref{badgood} as in \cite{EEK-BNSF}, we obtain
\begin{align*}
&|\Jcal^{bad}(U^X)|
\le |\Jcal^{bad}(U^X)|\one{\abs{Y(U^X)}\ge \e^2}
+\left(\abs{\Bcal_{\d_3}(U^X)}+C\Bcal_1^+(U)+C\Gcal_h^+(U)\right)\one{\abs{Y(U^X)}\le \e^2}.
\end{align*}
This together with \eqref{contx} implies that
\begin{multline*}
\abs{\dot{X}} \le \frac{1}{\e^2}\bigg[ 2 \Big(|\Jcal^{bad}(U^X)|\Big)\one{\abs{Y(U^X)}\ge \e^2} \\
+2\Big(\abs{\Bcal_{\d_3}(U^X)}+C\Bcal_1^+(U)+C\Gcal_h^+(U)\Big) \one{\abs{Y(U^X)}\le \e^2}+2\abs{\Jcal^{para}(U^X)} + 1 \bigg].
\end{multline*}
Moreover, it follows from \eqref{jb-cont} with \(\e<\e_0\) and \(\d_0^{-1}\e < \l < \d_0 < 1/2\) that
\begin{multline*}
\int_0^{T} \Big(|\Jcal^{bad}(U^X)|\one{\abs{Y(U^X)}\ge \e^2} + \abs{\Jcal^{para}(U^X)}\\
+ \Big(\abs{\Bcal_{\d_3}(U^X)}+\Bcal_1^+(U^X)+\Gcal_h^+(U)\Big) \one{\abs{Y(U^X)}\le \e^2}\Big) dt \le C\frac{\l}{\d_0\e}\int_\RR \et(U_0|\Ut) d\x,
\end{multline*}
for some constant \(C>0\) independent of \(\d_0, \e/\l\) and \(T\).
This completes the proof of \eqref{est-shift3}. \qed

\section{Proof of Proposition \ref{prop:main}}
\setcounter{equation}{0}
This section is dedicated to the proof of Proposition \ref{prop:main}.

\subsection{Expansion in the size of the shock} \label{section-expansion}
We define the following functionals:
\begin{equation*}
\begin{split}
&\begin{aligned} 
&\Ical_g^Y(v) \coloneqq
-\frac{1}{2\s_\e^2} \int_\RR a'\abs{p(v)-p(\vtil)}^2d\x - \int_\RR a' \Phif{v}{\vtil} d\x \\
&\qquad\qquad\qquad
-\int_\RR a p(\vtil)' (v-\vtil) d\x + \frac{1}{\s_\e} \int_\RR a \htil' (p(v)-p(\vtil)) d\x
\end{aligned} \\
&\begin{aligned}
&\Ical_1(v) \coloneqq 
\frac{1}{2\s_\e} \int_\RR a' (p(v)-p(\vtil))^2 d\x,
&&\Ical_2(v) \coloneqq
\s_\e \int_\RR a \vtil' p(v|\vtil) d\x, \\
&\Gcal_v(v) \coloneqq
\s_\e \int_\RR a' \Phif{v}{\vtil} d\x,
&&\Dcal(v) \coloneqq
\int_\RR a v^\b \abs{\rd_\x (p(v)-p(\vtil))}^2 d\x.
\end{aligned}
\end{split}
\end{equation*}
Note that all of these quantities depend only on \(v\), not on \(h\).

We then present the following proposition, which provides a sharp estimate when \(p(v)\) exhibits values similar to \(p(\vtil)\).

\begin{proposition} \label{prop:main3}
For any constant \(C_2>0\), there exist constants \(\e_0,\d_3>0\) such that for any \(\e \in (0,\e_0)\) and any \(\l,\d \in (0,\d_3)\) with \(\e \le \l\), the following statement holds. \\
For any function \(v \colon \RR \to \RR^+\) such that \(\Dcal(v)+\Gcal_v(v)\) is finite, if
\begin{equation} \label{assYp}
\abs{\Ical_g^Y(v)} \le C_2 \frac{\e^2}{\l},\qquad  \norm{p(v)-p(\vtil)}_{L^\infty(\RR)}\le \d_3,
\end{equation}
then
\begin{align}
\begin{aligned}\label{redelta}
\Rcal_{\e,\d}(v)
&\coloneqq -\frac{1}{\e\d}\abs{\Ical_g^Y(v)}^2
+\Ical_1(v)+\d\frac{\e}{\l}\abs{\Ical_1(v)}
+\Ical_2(v)+\d\abs{\Ical_2(v)} \\
&\quad\quad-\left(1-\d \frac{\e}{\l} \right)\Gcal_v(v)-(1-\d)\Dcal(v)\le 0.
\end{aligned}
\end{align}
\end{proposition}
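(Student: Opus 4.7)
The approach is a leading-order expansion in the shock strength $\e$, combined with a sharp one-dimensional Poincar\'e-type (spectral-gap) inequality, following the philosophy of \cite{KV-Inven}. Write $w \coloneqq p(v)-p(\vtil)$; the hypothesis $\norm{p(v)-p(\vtil)}_{L^\infty(\RR)}\le \d_3$ keeps all relative quantities small, making Lemma \ref{lem:local} available throughout.

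First, I would rescale via $y = \e\x$ to the shock's natural length $1/\e$. Under this rescaling, the viscous shock profile $\vtil$ and the rescaled densities $a'/\e$, $\vtil'/\e^2$ become $O(1)$ profiles (by Lemma \ref{lem-VS} and \eqref{der-scale}), and the weighted measure $a'\,d\x$ becomes $\l$ times a fixed exponentially decaying profile. This makes the $\e$- and $\l$-dependence of $\Ical_g^Y$, $\Ical_1$, $\Ical_2$, $\Gcal_v$, $\Dcal$ fully explicit and recasts the comparison $\Ical_j$ versus $\Dcal$ as a Poincar\'e-type problem on a fixed profile.

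Next, I would Taylor-expand in $w$. Lemma \ref{lem:local} gives $\Phi(v/\vtil) = \tfrac{1}{2p(\vtil)^2}w^2 + O(w^3)$ and $p(v|\vtil) = \tfrac{1}{p(\vtil)}w^2 + O(w^3)$, while Rankine--Hugoniot for $p=1/v$ yields $\s_\e^2 = p(v_+)p(v_-)$, hence $\s_\e^2 = p(\vtil)^2 + O(\e)$ uniformly on the profile. The crucial near-cancellation
\[
\frac{1}{2\s_\e} - \frac{\s_\e}{2p(\vtil)^2} = \frac{p(\vtil)^2 - \s_\e^2}{2\s_\e\, p(\vtil)^2} = O(\e)
\]
between the leading coefficients of $\Ical_1$ and $\Gcal_v$ has residual of relative size $O(\e)$, which can be absorbed into the slacks $\d(\e/\l)\abs{\Ical_1}$ and $\d(\e/\l)\Gcal_v$ once $\e_0,\d_3$ are small enough. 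What truly remains is to control the positive term $\Ical_2$ together with the nonlocal penalty $-\tfrac{1}{\e\d}\abs{\Ical_g^Y}^2$ by the dissipation $\Dcal$, modulo $\d\abs{\Ical_2}$ and $\d\Dcal$.

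The heart of the argument is a sharp one-dimensional spectral-gap inequality on the rescaled variable. Decompose $w$ into its projection onto the translation mode $p(\vtil)'$ and its orthogonal complement: at leading order the projection is a multiple of $\Ical_g^Y$, so the nonlocal penalty $-\tfrac{1}{\e\d}\abs{\Ical_g^Y}^2$ absorbs that direction; on the orthogonal complement one needs a bound of the form
\[
\int A\, V^\b (\rd_y W)^2 \, dy \ge \kappa \int \abs{A'}\, W^2 \, dy
\]
with sharp constant $\kappa$ dictated by the leading coefficients of $\Ical_2 + \tfrac{1}{2}\Gcal_v$. This is the isothermal analog of the spectral-gap estimate in \cite{KV-Inven}; the pressure $p=1/v$ makes the algebra cleaner than the isentropic case, but $\kappa$ must be recomputed on the isothermal profile. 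The main obstacle is precisely this sharpness: the target inequality carries $1-\d(\e/\l)$ and $1-\d$ in front of $\Gcal_v$ and $\Dcal$, so the leading parts must balance with equality, which forces using Rankine--Hugoniot exactly in the $\Ical_1$--$\Gcal_v$ cancellation and forces the nonlocal penalty to align exactly with the translation mode. Once the sharp inequality is in hand, all Taylor remainders and $O(\e)$ coefficient errors absorb into the four slack terms $\d(\e/\l)\abs{\Ical_1}$, $\d\abs{\Ical_2}$, $\d(\e/\l)\Gcal_v$, $\d\Dcal$ provided $\e\le\l$ and $\e_0,\d_3$ are small.
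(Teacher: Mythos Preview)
Your overall architecture (rescale, Taylor expand in $w=p(v)-p(\vtil)$, reduce to a Poincar\'e-type inequality) matches the paper, but there is a genuine gap in how you treat the $\Ical_1-\Gcal_v$ cancellation, and your choice of rescaling and spectral inequality differs from what is actually used.

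\textbf{The cubic term is not a lower-order remainder.} You correctly identify the quadratic cancellation $\tfrac{1}{2\s_\e}-\tfrac{\s_\e}{2p(\vtil)^2}=O(\e)$, but the Taylor expansion of $\Phi(v/\vtil)$ in Lemma~\ref{lem:local} also produces a cubic $-\tfrac13(v/\vtil-1)^3$. After the paper's rescaling $W=(\l/\e)w$ (so that $\int_0^1 W^2\le C_1$ by the constraint \eqref{assYp}), the surviving contribution of $\Ical_1-\Gcal_v$ is $\tfrac{2}{3}\int_0^1 W^3\,dy$ at leading order, not an $O(\e)$ or $O(\d_3)$ multiple of a quadratic. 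This cubic is order one and sign-indefinite; it cannot be absorbed into any of the four slack terms $\d(\e/\l)\abs{\Ical_1}$, $\d\abs{\Ical_2}$, $\d(\e/\l)\Gcal_v$, $\d\Dcal$, all of which are small multiples of quadratic quantities or of $\Dcal$. Your proposed linear spectral-gap inequality $\int A\,V^\b(\rd_yW)^2\ge\kappa\int\abs{A'}W^2$ is quadratic in $W$ and does not touch $\int W^3$.

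\textbf{What the paper actually does.} The paper uses the \emph{nonlinear} change of variable $y=(p(v_-)-p(\vtil(\x)))/\e\in[0,1]$ rather than your linear $y=\e\x$. This choice, together with the Jacobian estimate (Lemma~\ref{lem-Jacobian}), converts $\Dcal$ into $\int_0^1 y(1-y)\abs{W_y}^2\,dy$ with the specific weight $y(1-y)$, and converts $\Ical_g^Y$ into $\int_0^1(W^2+2W)\,dy$ (note both a linear and a quadratic part, so it is not simply a projection onto the translation mode). The reduction then lands exactly on the nonlinear Poincar\'e inequality of Proposition~\ref{prop_nl_Poincare} (from \cite{KV21}), which explicitly contains the term $\tfrac23\int_0^1 W^3$ and the penalty $-\tfrac{1}{\d}\bigl(\int_0^1 W^2+2\int_0^1 W\bigr)^2$. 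That inequality is the sharp tool here; a linear spectral-gap on the complement of a single mode is not enough.
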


In the proof of the proposition above, the following lemma, which provides a nonlinear Poincar\`e type inequality, is needed.
\begin{proposition}\label{prop_nl_Poincare}
For a given constant \(C_1 >0\), there exists a constant \(\d_2>0\), such that for any \(\d\in(0,\d_2)\) the following statement holds. \\
For any \(W\in L^2(0, 1)\) such that \(\sqrt{y(1-y)}\rd_y W\in L^2(0, 1)\), if \(\int_0^1 \abs{W(y)}^2 dy\le C_1\), then 
\begin{align*}
\Rcal_\d(W) &\coloneqq -\frac{1}{\d}\left(\int_0^1 W^2 dy + 2\int_0^1 W dy\right)^2 
+ (1+\d)\int_0^1 W^2 dy \\
&\qquad
+ \frac{2}{3}\int_0^1 W^3 dy + \d\int_0^1\abs{W}^3 dy - (1-\d)\int_0^1 y(1-y)\abs{\rd_y W}^2 dy
\le 0.
\end{align*}
\end{proposition}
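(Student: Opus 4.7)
The plan is a direct proof via the decomposition $W = M + W_0$ with $M := \int_0^1 W\,dy$ and $\int_0^1 W_0\,dy = 0$, together with the sharp spectral gap of the weighted Sturm--Liouville operator $-\rd_y(y(1-y)\rd_y)$ on $(0,1)$. Setting $A := M$, $B := \int_0^1 W_0^2\,dy$, and $D := \int_0^1 y(1-y)|\rd_y W_0|^2\,dy$, the shifted Legendre polynomials $\{P_n(2y-1)\}$ diagonalize this operator with eigenvalues $n(n+1)$ (natural boundary conditions arise from the weight vanishing at the endpoints), yielding the sharp Poincar\'e inequality $B \le D/2$. Using $\int W_0 = 0$, the integrals in $\Rcal_\d(W)$ take the polynomial form
\[
\int W^2 + 2\int W = A^2 + 2A + B, \quad \int W^3 = A^3 + 3AB + \int W_0^3, \quad \int y(1-y)|\rd_y W|^2 = D,
\]
so $\Rcal_\d$ reduces to a polynomial in $(A, B, D)$ plus cubic remainders $\int W_0^3$ and $\int|W|^3$, which I control by a weighted interpolation of the form $\|W_0\|_{L^3}^3 \le C\,B^{1/2}(B + D)$ uniform on $\{\|W\|_{L^2}^2 \le C_1\}$ (obtained by combining the $L^2$ bound with the weighted $H^1$ control).

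A preliminary observation gives an a priori bound on $D$: if $D$ exceeds a threshold $K(C_1)$, the dissipation $-(1-\d)D$ dominates all other contributions (which are controlled through $A^2 + B \le C_1$ and the cubic estimates), making $\Rcal_\d(W)$ automatically nonpositive; so I may restrict attention to $D \le K(C_1)$. The penalty $-\frac{1}{\d}(A^2 + 2A + B)^2$ then forces $A^2 + 2A + B = O(\d^{1/2})$, restricting $A$ to a small neighborhood of the interval $[-2, 0]$ with $B \approx -A(A+2)$. I split the argument into three subregimes: (i) the interior $A \in [-2 + \eta, -\eta]$ for a fixed small $\eta$, where substituting $B \approx -A(A+2)$ and using $-D \le -2B$ collapses the leading quadratic-plus-cubic part to $-2A^2 + 2A - \frac{4}{3}A^3$, which is strictly negative on this interval with a uniform bound $\le -c(\eta) < 0$; (ii.a) the endpoint regime $A$ near $0$ (so $A \approx -B/2$ from the constraint), where the leading terms collapse to $B - D + O(B^2) \le -B + O(B^2)$ by $D \ge 2B$, strictly negative for $B > 0$ small; (ii.b) the endpoint regime $A$ near $-2$, where the leading $A^2 + \frac{2}{3}A^3$ converges to the fixed negative value $-4/3$, providing uniform strict negativity that dominates the rest.

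The main obstacle is balancing the cubic remainder against the sharp Poincar\'e slack $D - 2B \ge 0$ in the regime (ii.a) near $A = 0$: one must choose $\d_2$ small depending on both $C_1$ and the a priori dissipation bound $K(C_1)$, so that the error $C\,B^{1/2}(B+D)$ and the $\d\int|W|^3$ contribution together do not consume the leading $-B$. This delicate step pins down how $\d_2 = \d_2(C_1)$ must be selected and is the heart of the argument. A secondary technical point is justifying the spectral expansion at the level of functions with only $\sqrt{y(1-y)}\rd_y W \in L^2$, which is automatic since the weight degenerates precisely at the endpoints, making the natural Neumann-type conditions encoded in the form domain.
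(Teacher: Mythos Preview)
The paper itself does not prove this proposition; it cites \cite{KV21} (Proposition~3.3) and \cite{EEK-BNSF} (Proposition~5.1). Your outline captures the right skeleton --- the mean-zero decomposition $W=A+W_0$, the sharp Legendre gap $D\ge 2B$, the penalty forcing $A^2+2A+B\approx 0$, and the case split in $A$ --- but there is a genuine gap in how you handle the cubic remainder.

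The failure is already in your preliminary step. With the interpolation you quote, $\|W_0\|_{L^3}^3 \le C\,B^{1/2}(B+D)$, the cubic contribution $\tfrac{2}{3}\int W^3 + \d\int|W|^3$ is bounded by a quantity of size $C\,C_1^{1/2}(C_1 + D)$, and the coefficient $C\,C_1^{1/2}$ in front of $D$ is not small when $C_1$ is large. Then $-(1-\d)D$ does \emph{not} dominate for large $D$, and no threshold $K(C_1)$ can be extracted. The same obstruction re-emerges in regime~(i): after substituting $B\approx -A(A+2)$ and using only $D\ge 2B$, the leading polynomial $-2A^2+2A-\tfrac{4}{3}A^3$ is only $O(1)$ negative (indeed $\approx -2\eta$ near the endpoints of your interval), while with your interpolation the cubic remainder is of size $C\,C_1^{1/2}D$, which cannot be absorbed uniformly in $C_1$ by the slack $-(D-2B)$ alone.

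What the cited proofs exploit, and what your decomposition is one Legendre mode too coarse to see, is the further splitting $W_0 = a_1\hat P_1 + W_\perp$ with $W_\perp$ orthogonal to $\hat P_0,\hat P_1$. The crucial algebraic fact is $\int_0^1 \hat P_1^3\,dy = 0$ (since $\hat P_1(y)=\sqrt{3}(2y-1)$ is odd about $y=\tfrac12$), so the $a_1^3$ contribution to $\int W_0^3$ vanishes identically. Every surviving cubic term then carries at least one factor of $W_\perp$, and $W_\perp$ enjoys the improved gap $\int_0^1 y(1-y)|\rd_y W_\perp|^2\,dy \ge 6\int_0^1 W_\perp^2\,dy$, which supplies strictly more diffusion than $D=2B$ precisely when $W_\perp\ne 0$. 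This extra slack is exactly what absorbs the $W_\perp$-driven cubic remainders and closes the estimate uniformly in $C_1$; without it your argument does not go through.
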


\begin{proof}
The proof can be found in \cite[Proposition 3.3]{KV21}, and \cite[Proposition 5.1]{EEK-BNSF}.
\end{proof}

We now present the proof of Proposition \ref{prop:main3} below.

\begin{proof}[Proof of Proposition \ref{prop:main3}]
For given \((v_-,u_-)\), we consider any constant \(\d_3 \in (0,1/2)\) such that 
\(
\d_3 < \d_*
\)
where the constants \(\d_*>0\) is in Lemma \ref{lem:local}.

To use Proposition \ref{prop_nl_Poincare}, we will rewrite the functionals \(\Ical_g^Y, \Ical_1, \Ical_2, \Gcal_v\) and \(\Dcal\) in terms of the following variables:
\begin{equation} \label{yandw}
y \coloneqq \frac{p(v_-)-p(\vtil(\x))}{\e}, \qquad w \coloneqq (p(v)-p(\vtil)) \circ y^{-1}, \qquad W \coloneqq \frac{\l}{\e} w.
\end{equation}
Notice that since \(p(\vtil(\x))\) is decreasing in \(\x\), we use a change of variable \(\x\in\RR \mapsto y\in[0,1]\).

Now, we express \(\Ical_g^Y, \Ical_1, \Ical_2, \Gcal_v\) and \(\Dcal\) using the variables introduced in \eqref{yandw}. \\

\(\bullet\) \textbf{Change of variable for \(\Ical_g^Y\):} We decompose \(\Ical_g^Y\) into four parts as follows and analyze them separately:
\begin{align*} 
&\Ical_g^Y(v) \coloneqq
\underbrace{-\frac{1}{2\s_\e^2} \int_\RR a'\abs{p(v)-p(\vtil)}^2d\x}_{\eqqcolon Y_1}
\underbrace{-\int_\RR a' \Phif{v}{\vtil} d\x}_{\eqqcolon Y_2} \\
&\qquad\qquad\qquad
\underbrace{-\int_\RR a p(\vtil)' (v-\vtil) d\x}_{\eqqcolon Y_3}
\underbrace{+\frac{1}{\s_\e} \int_\RR a \htil' (p(v)-p(\vtil)) d\x}_{\eqqcolon Y_4}.
\end{align*} 
For \(Y_1\) and \(Y_2\), we use \eqref{Phi-est1} and \eqref{sm1} to obtain
\[
\abs{Y_1+\frac{1}{2\s_*^2}\frac{\e^2}{\l}\int_0^1 W^2 dy}
+\abs{Y_2+\frac{1}{2\s_*^2}\frac{\e^2}{\l}\int_0^1 W^2 dy} \le C(\e_0+\d_3) \int_0^1 W^2 dy.
\]
For \(Y_3\) and \(Y_4\), we use \eqref{ratio-vh} and \eqref{sm1} to obtain
\[
\abs{Y_3+\frac{1}{\s_*^2}\frac{\e^2}{\l}\int_0^1 W dy}
+\abs{Y_4+\frac{1}{\s_*^2}\frac{\e^2}{\l}\int_0^1 W dy} \le C(\e_0+\d_3) \int_0^1 \abs{W}dy.
\]
Thus, it follows that 
\begin{equation} \label{insidey}
\abs{\s_*^2 \frac{\l}{\e^2}\Ical_g^Y + \int_0^1 (W^2 + 2W)dy}
\le C(\e_0+\d_3) \left(\int_0^1 W^2 dy + \int_0^1 \abs{W} dy \right).
\end{equation}
Moreover, this together with the assumption \eqref{assYp} implies that 
\[
\int_0^1 W^2 - 2 \abs{\int_0^1 W dy}
\le C_2 \s_*^2 + C (\e_0+\d_3) \left(\int_0^1 W^2 dy + \int_0^1 \abs{W} dy \right).
\]
Using 
\[
\abs{\int_0^1 W dy} \le \int_0^1 \abs{W} dy \le \frac{1}{8} \int_0^1 W^2 dy +8
\]
we find that for small enough \(\e_0,\d_3>0\),
\begin{align*}
\int_0^1 W^2 dy
&\le \frac{1}{2} \int_0^1 W^2 dy + C +24.
\end{align*}
Thus, there exists a constant \(C_1>0\) depending on \(C_2\) (but not on \(\e,\l\)) such that
\begin{equation} \label{L2normW}
\int_0^1 W^2 dy \le C_1.
\end{equation}
With this and by taking \(\e_0<\d_3\), we apply \eqref{insidey} to derive an estimate on \(\abs{\Ical_g^Y}^2\) as follows:
\begin{equation} \label{insidey2}
\begin{aligned}
-\s_*^2 \frac{\l^2}{\e^3} \frac{\abs{\Ical_g^Y}^2}{\e\d_3}
&= -\frac{1}{\s_*^2 \d_3}\abs{\s_*^2 \frac{\l}{\e^2}\Ical_g^Y}^2 \\
&\le -\frac{1}{2\s_*^2 \d_3} \abs{\int_0^1 (W^2+2W)dy}^2
+C \frac{(\e_0+\d_3)^2}{\d_3} \abs{\int_0^1 (W^2+2\abs{W})dy}^2 \\
&\le -\frac{1}{2\s_*^2 \d_3} \abs{\int_0^1 (W^2+2W)dy}^2
+C \d_3 \int_0^1 W^2 dy.
\end{aligned}
\end{equation}

\(\bullet\) \textbf{Change of variable for \(\Ical_1 - \Gcal_v\):}
Using \eqref{Phi-est1}, we write down \(\Ical_1\) and \(\Gcal_v\) together, and observe that they have like terms of order \(2\):
\begin{align*}
\Ical_1(v) -\Gcal_v(v) 
\le \frac{1}{2\s_\e} \int_\RR a' (1-\s_\e^2 \vtil^2) (p(v)-p(\vtil))^2 d\x
+\frac{2}{3}\s_\e\int_\RR a' \vtil^3 (p(v)-p(\vtil))^3 d\x.
\end{align*}
Note by \eqref{sm1} that \(\abs{1-\s_\e^2\vtil^2}\le C\e\) and \(\abs{1/\s_*^3 -\vtil^3}\le C\e\) for all \(\x\in\RR\).
Hence, we use \eqref{sm1} once again to obtain 
\begin{align*}
\Ical_1(v)-\Gcal_v(v)
&\le \frac{2}{3\s_*^2} \int_\RR a' (p(v)-p(\vtil))^3 d\x \\
&\qquad
+ C \e \int_\RR a' (p(v)-p(\vtil))^2 d\x
+ C \e \int_\RR a' \abs{p(v)-p(\vtil)}^3 d\x,
\end{align*}
which can be rewritten in terms of the variables in \eqref{yandw} as follows:
\begin{equation} \label{insideb1}
\begin{aligned}
\s_*^2 \frac{\l^2}{\e^3} (\Ical_1-\Gcal_v)
&\le \frac{2}{3} \int_0^1 W^3 dy 
+ C \d_3 \int_0^1 W^2 dy
+ C \e_0 \int_0^1 \abs{W}^3 dy.
\end{aligned}
\end{equation}

\(\bullet\) \textbf{Change of variable for \(\Ical_2\):}
To estimate \(\Ical_2\), we first note that \(p(v|\vtil) = v (p(v)-p(\vtil))^2\) by simple computations.
Then, from \(\vtil' = -\vtil^2 p(\vtil)'\), we obtain 
\[
\Ical_2(v) = \s_\e \int_\RR a\vtil'p(v|\vtil)d\x
=-\s_\e \int_\RR a v \vtil^2 p(\vtil)' (p(v)-p(\vtil))^2 d\x.
\]
Hence, \eqref{sm1} and \eqref{assYp} imply that 
\begin{equation} \label{insideb2}
\abs{\s_*^2 \frac{\l^2}{\e^3}\Ical_2 - \int_0^1 W^2 dy}
\le C(\e_0+\d_3)\int_0^1 W^2 dy.
\end{equation}

\(\bullet\) \textbf{Change of variable for \(\Dcal\):}
To describe \(\Dcal\) in terms of the variables in \eqref{yandw}, a certain Jacobian estimate, as addressed in Lemma \ref{lem-Jacobian} below, is needed.

With the variables \(y\) and \(W\), the diffusion \(\Dcal\) is described as 
\[
\Dcal = \frac{\e^2}{\l^2} \int_0^1 a v^\b \abs{W_y}^2 \left(\frac{dy}{d\x}\right) dy.
\]
Thanks to Lemma \ref{lem-Jacobian} below, it holds from \eqref{assYp} that
\begin{equation} \label{insided}
\abs{\s_*^2 \frac{\l^2}{\e^3}\Dcal - \int_0^1 y(1-y)\abs{W_y}^2 dy}
\le C(\e_0+\d_3) \int_0^1 y(1-y)\abs{W_y}^2 dy.
\end{equation}

\(\bullet\) \textbf{Conclusion:}
To complete the proof, we observe that for any \(\d \in(0,\d_3)\),
\begin{align*}
\Rcal_{\e,\d}(v)
&\le -\frac{1}{\e\d_3}\abs{\Ical_g^Y(v)}^2
+\Ical_1(v)+\d_3\frac{\e}{\l}\abs{\Ical_1(v)}
+\Ical_2(v)+\d_3\abs{\Ical_2(v)} \\
&\quad\quad-\left(1-\d_3 \frac{\e}{\l} \right)\Gcal_v(v)-(1-\d_3)\Dcal(v).
\end{align*}
Then synthesizing \eqref{insidey2}, \eqref{insideb1}, \eqref{insideb2} and \eqref{insided}, and taking into account \(\e_0<\d_3\), we obtain the following: for any \(\d \in (0,\d_3)\) and for some constants \(C_3,C_4>0\),
\begin{align*}
\s_*^2 \frac{\l^2}{\e^3} \Rcal_{\e,\d}(v)
&\le -\frac{1}{C_3 \d_3} \abs{\int_0^1 (W^2+2W)dy}^2 + \int_0^1 W^2 dy + C_4 \d_3 \int_0^1 W^2 dy \\
&\qquad
+\frac{2}{3} \int_0^1 W^3 dy + C_4 \d_3 \int_0^1 \abs{W}^3 dy
-(1-C_4\d_3)\int_0^1 y(1-y)\abs{W_y}^2 dy.
\end{align*}
We then fix \(\d_2\) of Proposition \ref{prop_nl_Poincare} corresponding to \(C_1\) in \eqref{L2normW}, and choose a small enough \(\d_3\) such that \(\max(C_3,C_4)\d_3 < \d_2\).
Hence, Proposition \ref{prop_nl_Poincare} gives the desired inequality \eqref{redelta}.
\end{proof}

We here present the Jacobian estimate which was used to rewrite the diffusion term.
\begin{lem} \label{lem-Jacobian}
Consider the variables in \eqref{yandw}. Then the following estimate holds.
\[
\abs{\frac{\vtil^\b}{y(1-y)}\frac{dy}{d\x} - \frac{\e}{\s_*^2}} \le C \e^2.
\]
\end{lem}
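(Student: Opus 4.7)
The plan is to solve explicitly for $\vtil^\b p(\vtil)'$ from the shock profile equations; the resulting rational expression will factor against $y(1-y)$ so that the Jacobian collapses to a single simple quantity, and a standard Taylor expansion in $\e$ then finishes the proof.

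First I would integrate the shock profile equations \eqref{shock} from $-\infty$, using the far-field condition $(\vtil,\htil)(-\infty)=(v_-,u_-)$. The second equation yields $\htil-u_-=(p(\vtil)-p_-)/\s_\e$; substituting this into the integrated first equation gives $\vtil^\b p(\vtil)'=\s_\e(\vtil-v_-)+(p(\vtil)-p_-)/\s_\e$. Using the isothermal law $p(v)=1/v$ to rewrite $\vtil-v_-=(p_--p(\vtil))/(p(\vtil)\,p_-)$, together with the Rankine--Hugoniot identity $\s_\e^2=p_+p_-$ (which follows from \eqref{end-con} via $v_+-v_-=(p_--p_+)/(p_+p_-)$), this simplifies to the key closed-form identity
\[
\vtil^\b p(\vtil)' \;=\; \frac{(p(\vtil)-p_-)\,(p(\vtil)-p_+)}{\s_\e\, p(\vtil)}.
\]

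By the definitions $y=(p_--p(\vtil))/\e$, $1-y=(p(\vtil)-p_+)/\e$, and $dy/d\x=-p(\vtil)'/\e$, one has $y(1-y)=-(p(\vtil)-p_-)(p(\vtil)-p_+)/\e^2$. Plugging the identity above into $\vtil^\b/(y(1-y))\cdot dy/d\x$, the common quadratic factor $(p(\vtil)-p_-)(p(\vtil)-p_+)$ cancels, leaving
\[
\frac{\vtil^\b}{y(1-y)}\,\frac{dy}{d\x} \;=\; \frac{\e}{\s_\e\, p(\vtil)}.
\]

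Finally, since $\s_*=1/v_-=p_-$, I would write
\[
\frac{\e}{\s_\e\, p(\vtil)} - \frac{\e}{\s_*^2} \;=\; \frac{\e\,\bigl(p_-^2-\s_\e\, p(\vtil)\bigr)}{\s_\e\, p(\vtil)\, p_-^2},
\]
and estimate the numerator by $|p_-^2-\s_\e p(\vtil)|\le p_-\,|p_--p(\vtil)|+|p_--\s_\e|\,p(\vtil)\le C\e$, using the trivial bound $|p(\vtil)-p_-|\le\e$ together with $|\s_\e-\s_*|\le C\e$ from Lemma \ref{lem-VS} (see \eqref{sm1}). Since the denominator stays uniformly bounded below for $\e<\e_0$, this yields the desired $O(\e^2)$ bound. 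I do not expect a genuine obstacle here: the argument is a direct computation whose one essential ingredient is the isothermal identity $\s_\e^2=p_+p_-$, which makes the right-hand side of the shock ODE factor cleanly against the product form of $y(1-y)$.
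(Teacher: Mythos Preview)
Your proof is correct and follows essentially the same idea as the paper's: integrate the shock ODE, exploit the isothermal structure to obtain an exact expression for the Jacobian, and finish with \eqref{sm1}. The only cosmetic difference is the algebraic packaging: the paper splits $\frac{1}{y(1-y)}=\frac{1}{y}+\frac{1}{1-y}$ and integrates \eqref{shock} from both $\pm\infty$, simplifying each piece via $\frac{\vtil-v_\pm}{p(\vtil)-p_\pm}=-\vtil v_\pm$, whereas you integrate once from $-\infty$ and bring in the right endpoint through the Rankine--Hugoniot identity $\s_\e^2=p_+p_-$ to obtain the factored form $\vtil^\b p(\vtil)'=(p(\vtil)-p_-)(p(\vtil)-p_+)/(\s_\e\, p(\vtil))$. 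Both routes collapse to the identical closed form $\frac{\vtil^\b}{y(1-y)}\frac{dy}{d\x}=\frac{\e}{\s_\e\, p(\vtil)}=\frac{\e\,\vtil}{\s_\e}$, after which the $O(\e^2)$ bound follows from $|\s_\e-\s_*|+|p(\vtil)-p_-|\le C\e$.
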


\begin{proof}
From \eqref{yandw}, we have 
\[
\frac{1}{y(1-y)}\frac{dy}{d\x} = \frac{1}{y}\frac{dy}{d\x} + \frac{1}{1-y}\frac{dy}{d\x} = \frac{\rd_\x p(\vtil)}{p(\vtil) - p_-} - \frac{\rd_\x p(\vtil)}{p(\vtil) - p_+}. 
\]
On the other hands, integrating the viscous shock equations \eqref{shock} over \((\pm\infty,\x]\) gives that 
\[
\left\{
\begin{array}{ll}
- \s_\e (\vtil - v_\pm) - (\htil - h_\pm) = - \vtil^\b \rd_\x p(\vtil), \\
- \s_\e (\htil - h_\pm) + (p(\vtil) - p_\pm) = 0.
\end{array} \right.
\]
Then, we have 
\[
\vtil^\b \rd_\x p(\vtil)
= \s_\e (\vtil - v_\pm) + (\htil - h_\pm)
= \frac{1}{\s_\e} \left( \s_\e^2 (\vtil - v_\pm) + (p(\vtil) - p_\pm)\right).
\]
Using \eqref{end-con}, we obtain
\begin{align*}
\frac{\vtil^\b}{y(1-y)}\frac{dy}{d\x} &= \frac{\vtil^\b \rd_\x p(\vtil)}{p(\vtil) - p_-} - \frac{\vtil^\b \rd_\x p(\vtil)}{p(\vtil) - p_+} = \s_\e \left( \frac{\vtil-v_-}{p(\vtil)-p_-} - \frac{\vtil-v_+}{p(\vtil)-p_+} \right) \\
&= \s_\e(- \vtil v_- + \vtil v_+) = \frac{\e}{\s_\e}\vtil.
\end{align*}
Thus, using \eqref{sm1}, we obtain the desired estimate.
\end{proof}


\subsection{Truncation of the big values of \(\abs{p(v)-p(\vtil)}\)}\label{sec:out}
In order to apply Proposition \ref{prop:main3} in the proof of Proposition \ref{prop:main}, we need to show that all bad terms for values of \(v\) such that \(\abs{p(v)-p(\vtil)}\ge\d_3\) have a small effect.
However, the value of \(\d_3\) in Proposition \ref{prop:main3} is itself conditioned to the constant \(C_2\) in the proposition.
Thus, we first need to find a uniform bound on \(\Ical_g^Y\) that is independent of the truncation size \(k\).

We consider a truncation on \(\abs{p(v)-p(\vtil)}\) with a constant \(k>0\).
Later we will consider the case \(k=\d_3\) as in Proposition \ref{prop:main3}.
However, for now, we consider the general case \(k\) to estimate the constant \(C_2\).
To this end, let \(\psi_k\) be a continuous function defined by
\[
\psi_k(y) = \inf (k, \sup(-k,y)), \quad k>0.
\]
We then define the function \(\vbar_k\) uniquely (since the function \(p\) is one to one) as 
\begin{equation} \label{vbar-k}
p(\vbar_k)-p(\vtil) = \psi_k(p(v)-p(\vtil)).
\end{equation}

We have the following lemma.
\begin{lemma} \label{lemmeC2}
For a fixed \((v_-,u_-)\in \RR^+ \times \RR\), there exist constants \(C_2,k_0,\e_0,\d_0>0\) such that for any \(\e \le \e_0, \e/\l \le \d_0\) with \(\l<1/2\), the followings hold whenever \(\abs{Y(U)}\le\e^2:\)
\begin{align}
&\int_\RR a' \Phif{v}{\vtil} d\x + \int_\RR a' (h-\htil)^2 d\x \le C\frac{\e^2}{\l}, \label{locE} \\
&\abs{\Ical_g^Y(\vbar_k)} \le C_2 \frac{\e^2}{\l}, \qquad \text{for every } k\le k_0. \label{lbis}
\end{align}
\end{lemma}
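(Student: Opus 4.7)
To prove \eqref{locE}, I would decompose $Y(U)=-\Acal(U)+\Lcal(U)$, where
$\Acal(U):=\int_\RR a'\,\et(U|\Util)\,d\x$ is the nonnegative quadratic piece and
$\Lcal(U):=-\int_\RR a\,p(\vtil)'(v-\vtil)\,d\x+\int_\RR a\,\htil'(h-\htil)\,d\x$
is the cross term in $U-\Util$. The assumption $|Y(U)|\le\e^2$ gives $\Acal(U)\le\e^2+|\Lcal(U)|$, so \eqref{locE} follows once I show $|\Lcal(U)|\le\tfrac{3}{4}\Acal(U)+C\e^2/\l$ and absorb. The scaling $|p(\vtil)'|,|\htil'|\le C(\e/\l)|a'|$ from \eqref{der-scale}, combined with $\int|\htil'|\,d\x\le C\e$ read off from \eqref{end-con}, yields $\int a^2(\htil')^2/a'\,d\x\le C\e^2/\l$, so Cauchy--Schwarz plus Young's inequality controls the $(h-\htil)$-piece of $\Lcal$ directly. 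The $(v-\vtil)$-piece is the delicate one: on $\{v\in(v_-/3,3v_-)\}$ I use the quadratic side of \eqref{rel_Phi} with the same Cauchy--Schwarz device, while on $\{v\notin(v_-/3,3v_-)\}$ I use the linear side, $|v-\vtil|\le C\Phi(v/\vtil)$, together with $|p(\vtil)'|\le C(\e/\l)|a'|$ to reduce the integrand to $C(\e/\l)\,a'\Phi(v/\vtil)$, whose integral is $\le C(\e/\l)\Acal(U)$ and is absorbable once $\d_0$ is small.

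For \eqref{lbis}, I would expand $\Ical_g^Y(\vbar_k)$ into its four summands and bound each on the \emph{local set} $\{|p(v)-p(\vtil)|\le k\}$ (where $\vbar_k=v$) and on the \emph{cut set} $\{|p(v)-p(\vtil)|>k\}$ (where $|p(\vbar_k)-p(\vtil)|=k$, $|\vbar_k-\vtil|\le Ck$, and $\Phi(\vbar_k/\vtil)\le Ck^2$). The crucial additional input, valid once $k_0$ is chosen small enough in terms of $v_-$, is that by Lemma \ref{lem-pro} one has $\Phi(v/\vtil)\ge ck^2$ throughout the cut set (quadratic branch when $v$ is in a fixed bounded neighborhood of $v_-$, linear branch for $v$ far away, where $\Phi$ is bounded below by an absolute constant). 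Hence $k^2\int_{\rm cut}a'\,d\x\le c^{-1}\int_{\rm cut}a'\Phi(v/\vtil)\,d\x\le C\e^2/\l$ by \eqref{locE}, which, combined with \eqref{p-quad} on the local set, handles the two $a'$-weighted terms of $\Ical_g^Y(\vbar_k)$. The two $a$-weighted cross terms are then treated by exactly the Cauchy--Schwarz trick already developed for \eqref{locE}, replacing $\int a'(v-\vtil)^2\,d\x$ and $\int a'(p(v)-p(\vtil))^2\,d\x$ by their $\vbar_k$-counterparts, both bounded by $C\e^2/\l$ through the same local/cut split.

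The main obstacle is the far-field estimate in \eqref{locE}: because $(v-\vtil)^2$ is not globally controlled by $\Phi(v/\vtil)$, a single Cauchy--Schwarz shot cannot absorb the $(v-\vtil)$-cross term, and the dichotomy of Lemma \ref{lem-pro} forces the regional split and two distinct Young arguments, with the far-field piece leveraging the smallness of $\e/\l$ rather than of $\e$. A secondary subtlety is making the lower bound $\Phi(v/\vtil)\ge ck^2$ on the cut set uniform in the far field, which again uses the linear branch of \eqref{rel_Phi}. Once both points are in place the natural scaling of both inequalities is $\e^2/\l$ (not $\e^2$), and the smallness of $\d_0$ is what closes the absorption.
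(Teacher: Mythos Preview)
Your proposal is correct and, for \eqref{locE}, matches the paper's argument essentially line for line: the decomposition $Y=-\Acal+\Lcal$, the Cauchy--Schwarz/Young treatment of the $(h-\htil)$-piece using $|\htil'|\le C(\e/\l)|a'|$, and the regional split of the $(v-\vtil)$-piece into the quadratic branch (absorbed via Young) and the linear branch (absorbed via the smallness of $\e/\l$) are exactly what the paper does in its commented-out proof.

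For \eqref{lbis} the approach differs slightly. You split the integration domain into a local set and a cut set and invoke the pointwise lower bound $\Phi(v/\vtil)\ge ck^2$ on the cut set to control $k^2\int_{\rm cut}a'$. The paper instead observes that the truncation $\vbar_k$ satisfies $|p(\vbar_k)-p(\vtil)|\le k$ \emph{globally}, so \eqref{p-quad} and \eqref{v-quad} apply uniformly and give $|p(\vbar_k)-p(\vtil)|^2+|\vbar_k-\vtil|^2\le C\Phi(\vbar_k/\vtil)$; it then uses the monotonicity \eqref{Phi-sim}, i.e.\ $\Phi(\vbar_k/\vtil)\le\Phi(v/\vtil)$, to feed directly into \eqref{locE}. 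This avoids the local/cut dichotomy and the separate lower-bound argument entirely. Your route works and is natural if one forgets the monotonicity; the paper's route is shorter and has the advantage that the constant $C_2$ comes out with no dependence on $k$ without any additional bookkeeping.
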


\begin{proof}
Since $Y(U)$ consists of the localized quadratic perturbations as in \eqref{locE} and the localized linear perturbations, the assumption \(\abs{Y(U)}\le\e^2\) would imply the smallness \eqref{locE}, which gives \eqref{lbis}.
The proof follows the same argument as the proof of \cite[Lemma 4.4]{KV-Inven}, which utilizes the quadratic structure \eqref{rel_Phi}\(_1\) on a compact set, and the linear structure \eqref{rel_Phi}\(_2\) on its complement with the scale difference \eqref{der-scale}.
Thus, we omit the details.
\end{proof}

Now we fix the constant \(\d_3\) of Proposition \ref{prop:main3} associated with the constant \(C_2\) of Lemma \ref{lemmeC2}.
Without loss of generality, we may assume \(\d_3<k_0\) (since Proposition \ref{prop:main3} holds for any smaller \(\d_3\)).
From now on, we set (without confusion) 
\[
\vbar \coloneqq \vbar_{\d_3}, \quad
\Ubar \coloneqq (\vbar,h), \quad
\Bcal \coloneqq \Bcal_{\d_3}, \quad
\Gcal \coloneqq \Gcal_{\d_3}.
\]
In what follows, for simplification, we use the following notation:
\[
\O \coloneqq \{\x|(p(v)-p(\vtil))(\x)\le\d_3\}.
\]
Note by Lemma \ref{lemmeC2} that
\[
\abs{\Ical_g^Y(\vbar)} \le C_2 \frac{\e^2}{\l}.
\]

We also consider one-sided truncations which allow us to control the bad terms in different ways for each case of small or large values of \(v\).
Thus, we define \(\vbar_s\) and \(\vbar_b\) as follows:
\begin{equation} \label{trunc-sb}
p(\vbar_s)-p(\vtil) \coloneqq \psi_{\d_3}^b(p(v)-p(\vtil)), \quad
p(\vbar_b)-p(\vtil) \coloneqq \psi_{\d_3}^s(p(v)-p(\vtil)),
\end{equation}
where \(\psi_{\d_3}^b\) and \(\psi_{\d_3}^s\) are one-sided truncations, i.e., 
\[
\psi_{\d_3}^b(y) \coloneqq \sup (-\d_3,y), \quad
\psi_{\d_3}^s(y) \coloneqq \inf (\d_3,y).
\]
Notice that the truncation \(\vbar_s\) (resp. \(\vbar_b\)) represents the truncation of big (resp. small) values of \(v\) corresponding to \(\abs{p(v)-p(\vtil)}\ge\d_3\).
We also note that
\begin{align*}
    p(\vbar_s) - p(v)
&= (\psi_{\d_3}^b - I) (p(v)-p(\vtil))
= (-(p(v)-p(\vtil))-\d_3)_+, \\
p(v) - p(\vbar_b)
&= (I-\psi_{\d_3}^s)(p(v)-p(\vtil)) 
= ((p(v)-p(\vtil))-\d_3)_+, \\
\abs{p(v)-p(\vbar)}
&= \abs{(I-\psi_{\d_3})(p(v)-p(\vtil))} 
= (\abs{p(v)-p(\vtil)}-\d_3)_+.
\end{align*}
Therefore, using \eqref{vbar-k} and \eqref{trunc-sb}, we have
\begin{equation} \label{keyD}
\begin{aligned}
\Dcal(U)
&= \int_\RR a v^\b \abs{\rd_\x (p(v)-p(\vtil))}^2 (\one{\abs{p(v)-p(\vtil)}\le\d_3}+\one{p(v)-p(\vtil)<-\d_3} +\one{p(v)-p(\vtil)>\d_3})d\x\\
&= \Dcal(\Ubar)
+\int_\RR a v^\b \abs{\rd_\x (p(v)-p(\vbar_b))}^2 d\x
+\int_\RR a v^\b \abs{\rd_\x (p(v)-p(\vbar_s))}^2 d\x \\
&\ge \int_\RR a v^\b \abs{\rd_\x (p(v)-p(\vbar_b))}^2 d\x
+\int_\RR a v^\b \abs{\rd_\x (p(v)-p(\vbar_s))}^2 d\x.
\end{aligned}
\end{equation}
This shows the monotonicity property \(\Dcal(U)\ge\Dcal(\Ubar)\).
It also holds by \eqref{Phi-sim} and \eqref{locE} that
\begin{equation} \label{l2}
0 \le \s_\e \int_\RR a' \Big(\Phif{v}{\vtil}-\Phif{\vbar}{\vtil}\Big) d\x= \Gcal_v(U)-\Gcal_v(\Ubar)
\le \Gcal_v(U) \le C \frac{\e^2}{\l}.
\end{equation}

\subsubsection{Control of hyperbolic terms}\label{section_hyperbolic}
In this subsection, we will show Proposition \ref{prop_hyperbolic_out} which provides the controls of the hyperbolic bad terms \(\Bcal_{\d_3}\) in \eqref{badgood}.
First of all, we introduce the following notations:
\[
\Bcal_{\d_3} = \Bcal_1^+ +\Bcal_1^- + \Bcal_2,
\]
where
\begin{align*}
\Bcal_1^-(U) &\coloneqq \int_{\O^c} a'(p(v)-p(\vtil))(h-\htil) d\x,
&&\Bcal_1^+(U) \coloneqq \frac{1}{2\s_\e}\int_{\O} a' (p(v)-p(\vtil))^2 d\x, \\
\Bcal_2(U) &\coloneqq \s_\e \int_\RR a \vtil' p(v|\vtil) d\x.
\end{align*}
We also recall the notations \(\Gcal_h^+,\Gcal_h^-,\Gcal_v\) and \(\Dcal\) in \eqref{ggd} for the good terms.

We then present the following proposition of the controls of the hyperbolic bad terms.
\begin{proposition}\label{prop_hyperbolic_out}
There exist constants $\e_0, \deo, C, C^*>0$ (in particular, $C, C^*$ depends on the constant $\d_3$) such that for any $\e<\e_0$ and $\d_0^{-1}\e<\lambda<\d_0<1/2$, the following statements hold.
For any $U$ such that $\abs{Y(U)}\le \e^2$,
\begin{align}
&\abs{\Bcal_1^+(U)-\Ical_1(\vbar)}
\le C\frac{\e}{\l}\Dcal(U)+C\frac{\e^6}{\l^4}\Gcal_v(U)
\label{bo1p}\\
&\abs{\Bcal_1^-(U)}
\le \d_3 \Gcal_h^-(U) + C \Big(\frac{\e}{\l}\Big)^{\frac{3}{4}} \Dcal(U) + C\frac{\e^6}{\l^4}\Gcal_v(U)
\label{bo1m}\\
&\abs{\Bcal_2(U)-\Bcal_2(\Ubar)}
\le C\frac{\e^2}{\l^2} \Dcal(U) +C\frac{\e^7}{\l^5} \Gcal_v(U) +C\frac{\e}{\l}(\Gcal_v(U)-\Gcal_v(\Ubar))
\label{bo2}\\
&\abs{\Bcal_{\d_3}(U)}+\abs{\Bcal_1^+(U)}\le C^*\Big(\frac{\e^2}{\l} + \Big(\frac{\e}{\l}\Big)^{\frac{3}{4}} \Dcal(U)\Big)
\label{bo}
\end{align}
\end{proposition}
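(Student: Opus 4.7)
The four estimates all quantify how much the truncation $v \mapsto \vbar$ (or $\vbar_s$, $\vbar_b$) perturbs the hyperbolic bad terms. My plan is to decompose each $\Bcal_j(U) - \Bcal_j(\Ubar)$ using the fact that $v$ and its truncated counterparts agree on $\{|p(v)-p(\vtil)| \le \d_3\}$, so that the difference is supported on $\O^c = \{p(v)-p(\vtil)>\d_3\}$ and on $\{p(v)-p(\vtil)<-\d_3\}$. The central task reduces to controlling weighted integrals over these ``bad'' sets by the diffusion $\Dcal(U)$ and the localized entropy $\Gcal_v(U)$.

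\textbf{Key auxiliary estimate.} I would first establish a weighted measure-type bound of the form
\[
\int_{\O^c} a'\,d\x \;+\; \frac{1}{\d_3^2}\int_{\O^c} a'\,(p(v)-p(\vtil))^2\,\one{|p(v)-p(\vtil)|\le K}\,d\x \;\le\; C\,\frac{\e}{\l\,\d_3^2}\,\Dcal(U) + C\,\frac{\e^6}{\l^4\,\d_3^2}\,\Gcal_v(U),
\]
together with its tail counterpart on $\{|p(v)-p(\vtil)|>K\}$ which is dominated purely by $\Gcal_v(U)$ through \eqref{rel_Phi}$_2$ and \eqref{Phi-sim}. The derivation rests on three ingredients: (i) the scaling $|a'|\sim (\l/\e)|\vtil'|$ from \eqref{der-scale} and the exponential decay $|a'|\lesssim \e\l e^{-C\e|\x|}$; (ii) the integration-by-parts identity $\int a'(p(v)-p(\vtil))^2 = -2\int a(p(v)-p(\vtil))\rd_\x(p(v)-p(\vtil))$ combined with a Cauchy--Schwarz splitting of $v^\b\cdot v^{-\b}$ to separate the $\Dcal$-factor from an $L^2$-factor; (iii) Lemma \ref{lemma_pushing} to convert residual antiderivatives of $\vtil'$ into bounded quantities, producing the $\e$-dependent gains.

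\textbf{Derivation of \eqref{bo1p}--\eqref{bo2} and \eqref{bo}.} Once the auxiliary estimate is in place, \eqref{bo1p} follows algebraically: the difference $\Bcal_1^+(U)-\Ical_1(\vbar)$ reduces to $-\frac{\d_3^2}{2\s_\e}\int_{\O^c} a'\,d\x$ plus an analogous term on $\{p(v)-p(\vtil)<-\d_3\}$, each bounded by the auxiliary estimate. For \eqref{bo1m}, Young's inequality with parameter $\eta=\d_3\s_\e$ gives $|\Bcal_1^-(U)| \le \d_3\Gcal_h^-(U) + \frac{1}{2\d_3\s_\e}\int_{\O^c} a'(p(v)-p(\vtil))^2 d\x$, and the sub-exponent $(\e/\l)^{3/4}$ is then obtained by interpolating the auxiliary estimate against the uniform smallness \eqref{locE}. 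For \eqref{bo2}, I would exploit the identity $p(v|\vtil) = v(p(v)-p(\vtil))^2$ so that $\Bcal_2(U)-\Bcal_2(\Ubar)$ is again supported on the truncation set; the exponential decay \eqref{tail} of $\vtil'$ allows converting $a$-weights into $a'$-weights (modulo powers of $\l/\e$), reducing this to the auxiliary estimate plus a $\Gcal_v(U)-\Gcal_v(\Ubar)$ residue captured by \eqref{l2}. Finally, \eqref{bo} is obtained by adding $|\Ical_1(\vbar)|$ and $|\Bcal_2(\Ubar)|$, each of which is bounded by $C^*\e^2/\l$ via the quadratic expansion \eqref{p-est1} applied to the truncated (hence small) $p(\vbar)-p(\vtil)$, together with \eqref{locE}.

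\textbf{Main obstacle.} The principal difficulty is the auxiliary estimate, because the weight $v^\b$ in $\Dcal(U)$ degenerates on exactly the region where the bad terms are largest: namely, $\O^c$, where $v$ is small and $p(v)$ is large. Consequently $\Dcal$ alone cannot bound even the measure of $\O^c$, and one must rely on $\Gcal_v(U)$ to absorb the large-$|w|$ regime through the linear-growth lower bound in Lemma \ref{lem-pro}. Balancing the logarithmic divergence of $\Phi(v/\vtil)$ as $v\to 0$ against the power-law $v^\b$ in $\Dcal$, while tracking the correct powers of $\e$ and $\l$ through the exponential decay of $a'$ and $\vtil'$, is where the detailed bookkeeping (and the particular constants $\e/\l$, $(\e/\l)^{3/4}$, $\e^6/\l^4$, $\e^7/\l^5$ in the proposition) is forced upon us.
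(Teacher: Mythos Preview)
Your strategy correctly identifies that the truncation differences are supported on $\{|p(v)-p(\vtil)|>\d_3\}$ and that the core difficulty is the degeneracy of $v^\b$ on $\O^c$. However, the mechanism you propose for the auxiliary estimate --- integration by parts followed by Cauchy--Schwarz with a $v^\b\cdot v^{-\b}$ splitting --- does not close. After Cauchy--Schwarz you are left with a factor $\big(\int a^2 v^{-\b}(p(v)-p(\vtil))^2\,d\x\big)^{1/2}$, and on $\O^c$ the weight $v^{-\b}=p(v)^\b$ is unbounded precisely where the integrand lives. You name this obstacle in your last paragraph but do not resolve it.

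The paper's route is different and essential: it first proves a \emph{pointwise} lemma (Lemma~\ref{lem-pw}). Using \eqref{locE} and \eqref{lower-v} one locates a point $\x_0\in[-1/\e,1/\e]$ at which the truncation gap vanishes, and then the fundamental theorem of calculus applied to $v^{\b/2}(p(v)-p(\vbar_b))$ --- carrying the weight $v^{\b/2}$ \emph{through} the derivative --- yields
\[
\big|v^{\b/2}(p(v)-p(\vbar_b))(\x)\big|\le C\sqrt{|\x|+1/\e}\,\sqrt{\Dcal(U)}+C\sqrt{\e^4/\l^3}\,\sqrt{\Gcal_v(U)}.
\]
This pointwise bound, combined with Lemma~\ref{lemma_pushing} and the indicator estimate \eqref{Qconst-sb}, is what produces the auxiliary estimate and all of the specific $\e$--$\l$ powers.

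There is a second concrete gap in your treatment of \eqref{bo1m}. Your Young inequality leaves $\frac{1}{2\d_3\s_\e}\int_{\O^c}a'(p(v)-p(\vtil))^2\,d\x$, but on $\O^c$ one has $(p(v)-p(\vtil))^2\sim v^{-2}$ while $\Phi(v/\vtil)\sim|\log v|$, so this integral is \emph{not} controlled by $\Gcal_v$. The paper instead splits $\O^c$ at $p(v)-p(\vtil)=2\d_3$, applies H\"older (not Young) on the far piece, and uses the algebraic observation \eqref{powerup} that $(p(v)-p(\vbar_b))\one{p(v)-p(\vtil)>2\d_3}\le Cv^\b(p(v)-p(\vbar_b))^2$ to insert the missing $v^\b$ weight. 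The exponent $(\e/\l)^{3/4}$ then arises from a specific splitting of $\int a'(|\x|+1/\e)^2\one{\O^c}\,d\x$ at $|\x|=\e^{-1}(\l/\e)^{1/4}$ using the exponential decay in \eqref{der-scale}, not from a generic interpolation against \eqref{locE}.

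Finally, for \eqref{bo2} the identity $p(v|\vtil)=v(p(v)-p(\vtil))^2$ is awkward on $\{p(v)-p(\vtil)<-\d_3\}$ because $v$ is unbounded there; the paper writes $p(v|\vtil)-p(\vbar|\vtil)=(p(v)-p(\vbar))-p'(\vtil)(v-\vbar)$ and bounds $|v-\vbar|$ directly by $\Phi(v/\vtil)-\Phi(\vbar/\vtil)$ via \eqref{rel_Phi-L}, which is how the $\Gcal_v(U)-\Gcal_v(\Ubar)$ term enters.
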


To prove the proposition, we require a set of pointwise estimates, as presented below.
The lemma below plays a crucial role in our further analysis.
\begin{lemma} \label{lem-pw}
Under the same assumption as Proposition \ref{prop_hyperbolic_out}, there exists \(\x_0 \in [-1/\e, 1/\e]\) such that the followings hold: for any \(\x\in\RR\),
\begin{align}
&\abs{p(v)-p(\vbar_s)(\x)} \le C\sqrt{\int_{\x_0}^\x \one{p(v)-p(\vtil)<-\d_3}d\z}\sqrt{\Dcal(U)} \label{pwp1}\\
&\abs{v^{\b/2}(p(v)-p(\vbar_b))(\x)}
\le C\bigg(\sqrt{\int_{\x_0}^\x \one{p(v)-p(\vtil)>\d_3}d\z}\sqrt{\Dcal(U)} + \sqrt{\frac{\e^4}{\l^3}}\sqrt{\Gcal_v(U)}\bigg) \label{pwp2}\\
&\abs{v^{\b/2}(p(v)-p(\vbar_b))(\x)} 
\le C\Big(\sqrt{\abs{\x}+\frac{1}{\e}}\sqrt{\Dcal(U)} + \sqrt{\frac{\e^4}{\l^3}}\sqrt{\Gcal_v(U)}\Big) \label{pwp2'}
\end{align}
\end{lemma}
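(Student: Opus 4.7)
The plan is to locate a base point $\x_0 \in [-1/\e,1/\e]$ at which $p(v)-p(\vtil)$ is small, and then derive the three pointwise bounds by applying the fundamental theorem of calculus from $\x_0$, combined with the defining relations of the truncations $\vbar_s,\vbar_b$ in \eqref{trunc-sb}. To find $\x_0$, I combine the $L^1$-bound \eqref{locE} with the pointwise lower bound $a'(\x)\ge c\,\e\,\l$ on $[-1/\e,1/\e]$ (from \eqref{der-a} together with \eqref{lower-v}) to get
\[
    \int_{-1/\e}^{1/\e}\Phi(v/\vtil)\,d\x \;\le\; \frac{1}{c\,\e\,\l}\int_\RR a'\,\Phi(v/\vtil)\,d\x \;\le\; C\frac{\e}{\l^2}.
\]
Averaging then supplies some $\x_0 \in [-1/\e,1/\e]$ with $\Phi(v(\x_0)/\vtil(\x_0)) \le C\e^2/\l^2$. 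Since $\e/\l\le \d_0$ is small, the quadratic bound in \eqref{rel_Phi} forces $|p(v(\x_0))-p(\vtil(\x_0))|<\d_3$, hence $(p(v)-p(\vbar_s))(\x_0)=0=(p(v)-p(\vbar_b))(\x_0)$. Inequality \eqref{pwp1} is then immediate: set $g \coloneqq p(v)-p(\vtil)$; by \eqref{trunc-sb}, $\rd_\z(p(v)-p(\vbar_s))=\rd_\z(p(v)-p(\vtil))\,\one{g<-\d_3}$, and on $\{g<-\d_3\}$ the specific volume $v$ is bounded below by a positive constant (depending on $v_-$ and $\d_3$), so $v^\b \ge c(\d_3)>0$ there; applying Cauchy--Schwarz to $(p(v)-p(\vbar_s))(\x)=\int_{\x_0}^\x \rd_\z(p(v)-p(\vtil))\one{g<-\d_3}\,d\z$ yields \eqref{pwp1}.

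For \eqref{pwp2}, set $G(\x) \coloneqq v^{\b/2}(p(v)-p(\vbar_b))(\x)$, which vanishes at $\x_0$. Using $v_\z=-v^2 p(v)_\z$ and $\rd_\z(p(v)-p(\vbar_b))=\rd_\z(p(v)-p(\vtil))\one{g>\d_3}$,
\[
    G_\z \;=\; -\tfrac{\b}{2}\,v^{\b/2+1}\,p(v)_\z\,(p(v)-p(\vbar_b)) \;+\; v^{\b/2}\,\rd_\z(p(v)-p(\vtil))\,\one{g>\d_3},
\]
both summands being supported on $\{g>\d_3\}$. The crucial algebraic observation is that on this set the identity $v(p(v)-p(\vtil))=1-v\,p(\vtil)$ combined with $v\le 1/\d_3$ gives $v(p(v)-p(\vbar_b))\le 1$, whence $v^{\b/2+1}(p(v)-p(\vbar_b))\le v^{\b/2}\le C(\d_3)$. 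Splitting $p(v)_\z=\rd_\z(p(v)-p(\vtil))+p(\vtil)_\z$ I obtain
\[
    |G_\z| \;\le\; C\,v^{\b/2}|\rd_\z(p(v)-p(\vtil))|\,\one{g>\d_3} \;+\; C\,|p(\vtil)_\z|\,\one{g>\d_3}.
\]
Cauchy--Schwarz on the first term together with the definition of $\Dcal$ supplies the $\sqrt{\int_{\x_0}^\x \one{g>\d_3}\,d\z}\sqrt{\Dcal(U)}$ contribution. For the second, \eqref{der-a} gives $|p(\vtil)_\z|\le C(\e/\l)\,a'$, while on $\{g>\d_3\}$ the ratio $v/\vtil$ is bounded away from $1$ so $\Phi(v/\vtil)\ge c(\d_3)>0$; hence
\[
    \int v^{\b/2}|p(\vtil)_\z|\,\one{g>\d_3}\,d\z \;\le\; C\frac{\e}{\l}\int_{\{g>\d_3\}} a'\,d\x \;\le\; C\frac{\e}{\l}\,\Gcal_v(U).
\]
The interpolation $\frac{\e}{\l}\Gcal_v \le \frac{\e}{\l}\sqrt{\Gcal_v}\cdot\sqrt{C\e^2/\l}=C\sqrt{\e^4/\l^3}\sqrt{\Gcal_v}$, which uses the a priori bound $\Gcal_v\le C\e^2/\l$ from \eqref{locE}, then produces \eqref{pwp2}. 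Finally, \eqref{pwp2'} is obtained by replacing $\int_{\x_0}^\x \one{g>\d_3}\,d\z$ by the cruder bound $|\x-\x_0|\le |\x|+1/\e$.

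The main obstacle is the second step: unlike in Step~2, the weight $v^{\b/2}$ degenerates as $v\to 0$, and on $\{g>\d_3\}$ no pointwise lower bound on $v$ is available. The argument only goes through thanks to the uniform bound $v\,(p(v)-p(\vbar_b))\le 1$, which allows the awkward chain-rule term $v^{\b/2+1}p(\vtil)_\z(p(v)-p(\vbar_b))$ to be absorbed into $(\e/\l)\Gcal_v$ without generating uncontrolled negative powers of $v$, after which the a priori bound \eqref{locE} converts it into the claimed $\sqrt{\e^4/\l^3}\sqrt{\Gcal_v}$ term.
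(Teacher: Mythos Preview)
Your proof is correct and follows essentially the same approach as the paper: locate $\x_0$ by averaging the $a'$-weighted bound \eqref{locE} against the lower bound $a'\gtrsim \e\l$ on $[-1/\e,1/\e]$, then apply the fundamental theorem of calculus to $p(v)-p(\vbar_s)$ and to $v^{\b/2}(p(v)-p(\vbar_b))$. The paper writes the latter as $p(v)^{-\b/2}(p(v)-p(\vbar_b))$ (equivalent since $p(v)=1/v$) and uses the same key inequality $\frac{p(v)-p(\vbar_b)}{p(v)}\le 1$ that you phrase as $v(p(v)-p(\vbar_b))\le 1$; the tail term $\int \vtil'\,\one{g>\d_3}$ is handled identically via $\one{g>\d_3}\le C\Phi(v/\vtil)$ and \eqref{locE}. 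One cosmetic slip: in your displayed bound for $|G_\z|$ the second summand is $C|p(\vtil)_\z|\one{g>\d_3}$, but the subsequent integral carries an extra $v^{\b/2}$; since $v^{\b/2}\le C(\d_3)$ on $\{g>\d_3\}$ this is harmless.
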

\begin{proof}
Using \eqref{lower-v}, \eqref{der-scale} and \eqref{locE}, we get 
\[
2\e \int_{-1/\e}^{1/\e} \Phif{v}{\vtil} d\x
\le \frac{2\e}{\inf_{[-1/\e,1/\e]} a' } \int_\RR a' \Phif{v}{\vtil}d\x
\le C \frac{\e}{\l\e}\frac{\e^2}{\l} \le C \Big(\frac{\e}{\l}\Big)^2.
\]
Then, there exists \(\x_0 \in [-1/\e,1/\e]\) such that \(\Phi(v(\x_0)/\vtil(\x_0))\le C(\e/\l)^2\).
For \(\d_0>0\) small enough, and using \eqref{p-quad}, we have 
\[
\abs{(p(v)-p(\vtil))(\x_0)} \le C\frac{\e}{\l}.
\]
Thus, if \(\d_0\) is small enough such that \(C\e/\l \le \d_3/2\), we find from the definition of \(\vbar\) that 
\[
\abs{(p(v)-p(\vbar))(\x_0)}=0.
\]
It also holds by \eqref{trunc-sb} that
\[
\abs{(p(v)-p(\vbar_s))(\x_0)}=\abs{(p(v)-p(\vbar_b))(\x_0)}=0.
\]
Therefore, using \eqref{keyD}, we establish \eqref{pwp1}: for any \(\x\in\RR\),
\begin{align*}
\abs{p(v)-p(\vbar_s)(\x)}
&\le \abs{\int_{\x_0}^\x \rd_\z (p(v)-p(\vbar_s))(\z) d\z}
\le C \abs{\int_{\x_0}^\x v^{\b/2} \rd_\z (p(v)-p(\vbar_s))(\z) d\z} \\
&\le C \sqrt{\int_{\x_0}^\x \one{p(v)-p(\vtil)<-\d_3}(\z) d\z} \sqrt{\Dcal(U)}.
\end{align*}

To prove \eqref{pwp2}, we use the fundamental theorem of calculus as below:
\begin{equation} \label{bb1-pwp2}
\begin{aligned}
&\abs{v^{\b/2} (p(v)-p(\vbar_b))(\x)} 
= \abs{\frac{1}{p(v)^{\b/2}} (p(v)-p(\vbar_b))(\x)} \\
&\le \abs{\int_{\x_0}^\x v^{\b/2} \rd_\z(p(v)-p(\vbar_b)) d\z}
+ \frac{\b}{2}\abs{\int_{\x_0}^\x \frac{(p(v)-p(\vbar_b))}{p(v)^{\b/2 +1}} \rd_\z(p(v)-p(\vtil)) d\z} \\
&\qquad\qquad
+ \frac{\b}{2}\abs{\int_{\x_0}^\x \frac{(p(v)-p(\vbar_b))}{p(v)^{\b/2 +1}} \rd_\z(p(\vtil)) d\z} \\
&\le \abs{\int_{\x_0}^\x v^{\b/2} \rd_\z(p(v)-p(\vbar_b)) d\z}
+ C\abs{\int_{\x_0}^\x v^{\b/2} \rd_\z(p(v)-p(\vtil)) \one{p(v)-p(\vtil)>\d_3} d\z} \\
&\qquad\qquad
+ C\abs{\int_{\x_0}^\x \vtil' \one{p(v)-p(\vtil)>\d_3} d\z}.
\end{aligned}
\end{equation}
To handle the first two terms on the right-hand side, we use H\"older's inequality and \eqref{keyD} as above.
However, for the third term, we need the following observation: for any \(\x\in\RR\) with \(\abs{(p(v)-p(\vbar))(\x)}>0\) or equivalently \(\abs{(p(v)-p(\vtil))(\x)}>\d_3\), we have \(\Phi(v/\vtil)(\x)\ge\a\) for some constant \(\a>0\) depending only on \(\d_3\).
Thus, we have
\[
\one{\abs{p(v)-p(\vbar)}>0} = \one{\abs{p(v)-p(\vtil)}>\d_3}
\le \frac{\Phi(v/\vtil)}{\a}.
\]
In particular, we have
\begin{equation} \label{Qconst-sb}
\begin{aligned}
\one{\abs{p(v)-p(\vbar_s)}>0}+\one{\abs{p(v)-p(\vbar_b)}>0}
= \one{\abs{p(v)-p(\vtil)}<\d_3}
&\le \frac{\Phi(v/\vtil)}{\a}.
\end{aligned}
\end{equation}
Now we use \eqref{der-scale}, \eqref{Qconst-sb} and \eqref{locE} in the following manner: 
\begin{equation} \label{bb2-pwp2}
\abs{\int_{\x_0}^\x \vtil' \one{p(v)-p(\vtil)>\d_3} d\z}
\le C\frac{\e}{\l} \int_\RR a' \Phif{v}{\vtil} d\x
\le C\sqrt{\frac{\e^4}{\l^3}}\sqrt{\Gcal_v(U)}.
\end{equation}
Thus, combining \eqref{bb1-pwp2} and \eqref{bb2-pwp2}, we obtain \eqref{pwp2} and \eqref{pwp2'} as we desired.
\end{proof}

We are now prepared to prove Proposition \ref{prop_hyperbolic_out}.
\begin{proof}[Proof of Proposition \ref{prop_hyperbolic_out}]

\bpf{bo1p}
From the definitions of \(\Bcal_1^+\) and \(\Ical_1\), we observe
\begin{align*}
&\abs{\Bcal_1^+(U)-\Ical_1(\vbar)}
=\frac{1}{2\s_\e}\abs{\int_\O a' \abs{p(v)-p(\vtil)}^2 d\x - \int_\RR a'\abs{p(\vbar)-p(\vtil)}^2 d\x} \\
&\le C\int_\O a' \abs{p(v)-p(\vbar)}^2 d\x
+C\int_\O a' \abs{p(v)-p(\vbar)}\abs{p(\vbar)-p(\vtil)} d\x \\
&\qquad\qquad
+C\int_{\O^c} a' \abs{p(\vbar)-p(\vtil)}^2 d\x \\
&\le C\int_\O a' \abs{p(v)-p(\vbar)}^2 d\x
+C\int_\O a' \abs{p(v)-p(\vbar)} d\x
+C\int_{\O^c} a' d\x
\eqqcolon I_1+I_2+I_3.
\end{align*}
For \(I_1\), since the region of the integration is \(\O\), \eqref{pwp1} gives the following bound:
\[
I_1 = \int_\RR a' \abs{p(v)-p(\vbar_s)}^2 d\x
\le C \Dcal(U) \int_\RR a' \int_{\x_0}^\x \one{p(v)-p(\vtil)<-\d_3} d\z d\x.
\]
Then, using Lemma \ref{lemma_pushing}, \eqref{Qconst-sb} and \eqref{locE} yields that 
\begin{equation} \label{bo1pJ1}
I_1
\le C \Dcal(U) \frac{1}{\e} \int_\RR a' \Phif{v}{\vtil} d\x
\le C \frac{\e}{\l} \Dcal(U).
\end{equation}
For \(I_2\), we will perform a nonlinearization in the following sense; we temporarily use the truncations of \(v\) by \(\d_3/2\) instead of \(\d_3\), which are denoted by \((\vbar_s)_{\d_3/2}\) and \((\vbar_b)_{\d_3/2}\), i.e.,
\[
p((\vbar_s)_{\d_3/2})-p(\vtil) \coloneqq \psi_{\d_3/2}^b(p(v)-p(\vtil)), \quad
p((\vbar_b)_{\d_3/2})-p(\vtil) \coloneqq \psi_{\d_3/2}^s(p(v)-p(\vtil)).
\]
Then, since \((y-\d_3/2)_+ \ge \d_3/2\) whenever \((y-\d_3)_+ > 0\), we have
\begin{equation} \label{NL}
(y-\d_3)_+ \le (y-\d_3/2)_+ \one{y-\d_3>0} \le (y-\d_3/2)_+ \Big(\frac{(y-\d_3/2)_+}{\d_3/2}\Big)
\le \frac{2}{\d_3}(y-\d_3/2)^2_+.
\end{equation}
This implies the following pointwise inequality: 
\[
\abs{p(v)-p(\vbar_s)} \one{p(v)-p(\vtil)<-\d_3}
\le C \abs{p(v)-p((\vbar_s)_{\d_3/2})}^2.
\]
Hence, it follows from \eqref{bo1pJ1} that 
\[
I_2 = C\int_\O a' \abs{p(v)-p(\vbar_s)} d\x
\le C \int_\O a' \abs{p(v)-p((\vbar_s)_{\d_3/2})}^2 d\x
\le C \frac{\e}{\l} \Dcal(U).
\]
For \(I_3\), we use \eqref{NL} to obtain the following pointwise inequality:
\begin{equation} \label{NL-B1}
\mathbf{1}_{\O^c}
= \one{p(v)-p(\vtil)>\d_3}
\le C v^\b \abs{p(v)-p((\vbar_b)_{\d_3/2})}^2.
\end{equation}
This together with \eqref{pwp2} implies that 
\begin{align*}
I_3
&\le C \int_\RR a' v^\b\abs{p(v)-p((\vbar_b)_{\d_3/2})}^2 d\x \\
&\le C \Dcal(U) \int_\RR a' \int_{\x_0}^\x \one{p(v)-p(\vtil)>\d_3/2} d\z d\x
+C\frac{\e^4}{\l^3} \Gcal_v(U) \int_\RR a' \one{p(v)-p(\vtil)>\d_3/2} d\x.
\end{align*}
Then, we use Lemma \ref{lemma_pushing}, \eqref{Qconst-sb} and \eqref{locE} again to have 
\begin{equation} \label{bo1pJ3}
I_3 \le C\frac{\e}{\l}\Dcal(U)+C\frac{\e^6}{\l^4}\Gcal_v(U).
\end{equation}
Summing up all, we obtain the desired inequality \eqref{bo1p}.

\bpf{bo1m}
First of all, we separate \(\Bcal_1^-(U)\) into three parts:
\begin{align*}
\Bcal_1^-(U)
&= \int_{\O^c} a'(p(v)-p(\vtil))(h-\htil)\one{p(v)-p(\vtil)\le 2\d_3} d\x \\
&\qquad
+\int_{\O^c} a'(p(v)-p(\vbar_b))(h-\htil)\one{p(v)-p(\vtil)> 2\d_3} d\x \\
&\qquad
+\int_{\O^c} a'(p(\vbar_b)-p(\vtil))(h-\htil)\one{p(v)-p(\vtil)> 2\d_3} d\x
\eqqcolon J_1+J_2+J_3.
\end{align*}
For \(J_1\) and \(J_3\), since \( \abs{p(\vbar_b)-p(\vtil)}\le\d_3\) on \(\O^c\), we use Young's inequality and \eqref{bo1pJ3} to find 
\[
J_1+J_3 \le 3\d_3 \int_{\O^c} a' |h-\htil|d\x
\le \d_3 \Gcal_h^-(U) + C \int_{\O^c} a' d\x
\le \d_3 \Gcal_h^-(U) + C\frac{\e}{\l}\Dcal(U)+C\frac{\e^6}{\l^4}\Gcal_v(U).
\]
Now, it remains to control \(J_2\).
To this end, we first observe that 
\begin{align*}
    \frac{p(v)^\b}{p(v)-p(\vbar_b)} \one{p(v)-p(\vtil)>2\d_3} &\leq C \Big(\frac{p(v)}{p(v)-p(\vbar_b)} \Big)\one{p(v)-p(\vtil)>2\d_3}\\
&=C\Big(1+\frac{p(\vbar_b)}{p(v)-p(\vbar_b)}\Big) \one{p(v)-p(\vtil)>2\d_3}
\le C \one{p(v)-p(\vtil)>2\d_3}.
\end{align*}
Then, we have
\begin{equation} \label{powerup}
(p(v)-p(\vbar_b))\one{p(v)-p(\vtil)>2\d_3} \le C v^\b(p(v)-p(\vbar_b))^2.
\end{equation}
With this, using H\"older's inequality and \eqref{locE}, we find that 
\begin{align*}
J_2 &\le \Big(\int_\RR a' (p(v)-p(\vbar_b))^2 \one{p(v)-p(\vtil)>2\d_3} d\x \Big)^{1/2}
\Big(\int_\RR a' (h-\htil)^2 d\x \Big)^{1/2} \\
&\le C\sqrt{\frac{\e^2}{\l}} \Big(\int_\RR a' v^{2\b} (p(v)-p(\vbar_b))^4 d\x \Big)^{1/2}.
\end{align*}
Hence, \eqref{pwp2'} with \eqref{Qconst-sb} and \eqref{locE} implies that
\begin{equation} \label{bo1mJ2}
\begin{aligned}
J_2
&\le C \sqrt{\frac{\e^2}{\l}}\Big(\Dcal(U)^2\int_\RR a' \Big(\abs{\x}+\frac{1}{\e}\Big)^2 \one{p(v)-p(\vtil)>\d_3} d\x \Big)^{\frac{1}{2}} \\
&\qquad
+C \sqrt{\frac{\e^2}{\l}} \Big(\frac{\e^8}{\l^6}\Gcal_v(U)^2\int_\RR a' \one{p(v)-p(\vtil)>\d_3} d\x\Big)^{\frac{1}{2}} \\
&\le C \sqrt{\frac{\e^2}{\l}} \Dcal(U)
\Big(\int_\RR a' \Big(\abs{\x}+\frac{1}{\e}\Big)^2 \one{p(v)-p(\vtil)>\d_3} d\x \Big)^{\frac{1}{2}}
+ C \frac{\e^6}{\l^4} \Gcal_v(U).
\end{aligned}
\end{equation}
To control the right-hand side, the following estimate is required: thanks to \eqref{der-scale},
\begin{align*}
\int_{\abs{\x}\ge\frac{1}{\e}\left(\frac{\l}{\e}\right)^{\frac{1}{4}}} a' \abs{\x}^2 d\x
&\le C \e\l \int_{\abs{\x}\ge\frac{1}{\e}\left(\frac{\l}{\e}\right)^{\frac{1}{4}}} e^{-C\e\abs{\x}}\abs{\x}^2 d\x \\
&\le C \frac{\l}{\e^2} \int_{\abs{z}\ge\left(\frac{\l}{\e}\right)^{\frac{1}{4}}} z^2 e^{-Cz} dz \\
&\le C \frac{\l}{\e^2} \int_{\abs{z}\ge\left(\frac{\l}{\e}\right)^{\frac{1}{4}}} e^{-\frac{C}{2}z} dz
\le C \frac{\l}{\e^2} e^{-\frac{C}{2}\left(\frac{\l}{\e}\right)^{\frac{1}{4}}}
\le C \frac{\l}{\e^2} \left(\frac{\e}{\l}\right)^\frac{3}{2}.
\end{align*}
Hence, we use \eqref{Qconst-sb} and \eqref{locE} to obtain 
\begin{align*}
\int_\RR a' \Big(\abs{\x}+\frac{1}{\e}\Big)^2 \one{p(v)-p(\vtil)>\d_3} d\x
&\le C \int_{\abs{\x}\ge\frac{1}{\e}\left(\frac{\l}{\e}\right)^{\frac{1}{4}}} a' \abs{\x}^2 d\x
+C \frac{1}{\e^2}\Big(\frac{\l}{\e}\Big)^{\frac{1}{2}} \int_\RR a' \Phif{v}{\vtil} d\x \\
&\le C\sqrt{\frac{1}{\l\e}} + C\sqrt{\frac{1}{\l\e}}
= C\sqrt{\frac{1}{\l\e}}.
\end{align*}
This together with \eqref{bo1mJ2} gives that 
\[
J_2 \le C \Big(\frac{\e}{\l}\Big)^{\frac{3}{4}} \Dcal(U) + C\frac{\e^6}{\l^4}\Gcal_v(U).
\]
Summing up all, we obtain the desired inequality \eqref{bo1m}.

\bpf{bo2} Recall the definition of \(\Bcal_2\), and we use \eqref{der-scale} to get 
\begin{equation} \label{B2}
\abs{\Bcal_2(U)-\Bcal_2(\Ubar)}
\le C\frac{\e}{\l} \int_\RR a'(p(v|\vtil)-p(\vbar|\vtil)) d\x.
\end{equation}
Note by \eqref{rel_Phi-L} that
\begin{align*}
\abs{p(v|\vtil)-p(\vbar|\vtil)}
&=\abs{p(v)-p(\vbar)-p'(\vtil)(v-\vbar)}
\le \abs{p(v)-p(\vbar)} + C \abs{v-\vbar}\\
&\le \abs{p(v)-p(\vbar)} + C \Big(\Phif{v}{\vtil}-\Phif{\vbar}{\vtil}\Big).
\end{align*}
Thus, it suffices to establish a bound to the following quantity:
\[
\int_\RR a' \abs{p(v)-p(\vbar)} d\x + C\int_\RR a' \Big(\Phif{v}{\vtil}-\Phif{\vbar}{\vtil}\Big) d\x
\eqqcolon K_1+K_2.
\]
For \(K_1\), we split it into three parts: 
\begin{align*}
K_1 &= \int_\RR a' \abs{p(v)-p(\vbar_s)} d\x
+ \int_\RR a' \abs{p(v)-p(\vbar_b)} \one{p(v)-p(\vtil) > 2\d_3} d\x\\
&\qquad
+ \int_\RR a' \abs{p(v)-p(\vbar_b)} \one{p(v)-p(\vtil) \le 2\d_3} d\x
\eqqcolon K_{11}+K_{12}+K_{13}.
\end{align*}
Note that \(K_{11}\) is exactly the same as \(I_2\) in the proof of \eqref{bo1p}: 
\[
K_{11} \le C \frac{\e}{\l} \Dcal(U).
\]
For \(K_{12}\) and \(K_{13}\), we proceed as follows: we use \eqref{powerup} for \(K_{12}\).
Moreover, since \(p(v)-p(\vbar_b)=(p(v)-p(\vbar_b))\one{p(v)-p(\vtil)>\d_3}\), we use \eqref{NL-B1} for \(K_{13}\).
Thus, we obtain
\[
K_{12} + K_{13} \le C \int_\RR a' v^\b(p(v)-p(\vbar_b))^2 d\x.
\]
Then, using \eqref{pwp2} and then using Lemma \ref{lemma_pushing}, \eqref{Qconst-sb} and \eqref{locE} yields that
\begin{align*}
K_{12}+K_{13} &\le
C \Dcal(U) \int_\RR a' \int_{\x_0}^\x \one{p(v)-p(\vtil)>\d_3} d\z d\x
+C\frac{\e^4}{\l^3} \Gcal_v(U) \int_\RR a' \one{p(v)-p(\vtil)>\d_3} d\x\\
&\le C\frac{\e}{\l} \Dcal(U) +C\frac{\e^6}{\l^4} \Gcal_v(U).
\end{align*}
On the other hand, for \(K_2\), it simply holds that 
\[
K_2 \le C (\Gcal_v(U)-\Gcal_v(\Ubar)).
\]
Therefore, combining \(K_{11}\), \(K_{12}\), \(K_{13}\) and \(K_2\), we obtain
\[
\int_\RR a'(p(v|\vtil)-p(\vbar|\vtil)) d\x \le C\frac{\e}{\l} \Dcal(U) +C\frac{\e^6}{\l^4} \Gcal_v(U)
+C (\Gcal_v(U)-\Gcal_v(\Ubar)).
\]
This together with \eqref{B2} implies the desired inequality \eqref{bo2}.

\bpf{bo} Using \eqref{locE} with \eqref{p-quad}, \eqref{p-est1} and \eqref{der-scale}, we have 
\[
\abs{\Ical_1(\vbar)}+\abs{\Bcal_2(\Ubar)} \le \Big(C+C\frac{\e}{\l}\Big) \int_\RR a'\Phif{v}{\vtil}d\x
\le C\frac{\e^2}{\l}.
\]
This together with \eqref{bo1p}, \eqref{bo1m} and \eqref{bo2} gives the following:
\begin{align*}
\abs{\Bcal_{\d_3}(U)}+\abs{\Bcal_1^+(U)}
&\le 2\abs{\Bcal_1^+(U)-\Ical_1(\vbar)} +2\abs{\Ical_1(\vbar)} +\abs{\Bcal_1^-(U)} \\
&\qquad
+\abs{\Bcal_2(U)-\Bcal_2(\Ubar)} +\abs{\Bcal_2(\Ubar)}\\
&\le C \Big(\frac{\e}{\l}\Big)^{\frac{3}{4}} \Dcal(U)
+ C\frac{\e^6}{\l^4}\Gcal_v(U)
+ C\frac{\e}{\l}(\Gcal_v(U)-\Gcal_v(\Ubar)) + C\frac{\e^2}{\l}.
\end{align*}
Then, this with \eqref{locE} and \eqref{l2} establishes the desired inequality \eqref{bo}.
\end{proof}

\subsubsection{Control of parabolic terms} \label{section-parabolic}
In this subsection, we will control the parabolic bad terms \(\Jcal^{para}\) of \eqref{ybg-first}.
For this purpose, we introduce the following notations:
\[
\Jcal^{para} = \Bcal_3 + \Bcal_4 + \Bcal_5,
\]
where
\begin{align*}
\Bcal_3(U) &\coloneqq -\int_\RR a' v^\b (p(v)-p(\vtil)) \rd_\x(p(v)-p(\vtil)) d\x, \\
\Bcal_4(U) &\coloneqq -\int_\RR a' p(\vtil)' (p(v)-p(\vtil))(v^\b-\vtil^\b) d\x, \\
\Bcal_5(U) &\coloneqq -\int_\RR a p(\vtil)' \rd_\x(p(v)-p(\vtil)) (v^\b-\vtil^\b) d\x.
\end{align*}
\begin{proposition}\label{prop_parabolic_out}
Under the same assumptions as Proposition \ref{prop_hyperbolic_out}, we have
\begin{align}
&\abs{\Bcal_3(U)} \le C\l\Dcal(U) + C\e\Gcal_v(U) \label{para-B3}\\
&\abs{\Bcal_4(U)} \le C \frac{\e^3}{\l} \Dcal(U) +C \e^2\Gcal_v(U) \label{para-B4}\\
&\abs{\Bcal_5(U)} \le  C\frac{\e}{\l}\Dcal(U)+C\e^2\Gcal_v(U) \label{para-B5}\\
&\abs{\Jcal^{para}(U)} \le C \Big(\l+\frac{\e}{\l}\Big)\Dcal(U) + C \e\Gcal_v(U) \label{para-total}\\
&\abs{\Jcal^{para}(U)} \le C^* \Big(\frac{\e^2}{\l} +\Big(\l+\frac{\e}{\l}\Big) \Dcal(U)\Big) \label{para-tot}
\end{align}
\end{proposition}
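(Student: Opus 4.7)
The plan is to bound each of $\Bcal_3, \Bcal_4, \Bcal_5$ by a small multiple of $\Dcal(U)$ together with a controlled multiple of $\Gcal_v(U)$, exploiting three sources of smallness: the exponential decay $|a'|\le C\e\l e^{-C\e|\x|}$ from \eqref{der-scale}, the smallness $|p(\vtil)'|\le C\e^2 e^{-C\e|\x|}$ from \eqref{tail}, and the local quadratic bounds $|p(v)-p(\vtil)|^2, |v-\vtil|^2\le C\Phi(v/\vtil)$ from Lemma \ref{lem:local} on $\O=\{|p(v)-p(\vtil)|\le\d_3\}$. Off $\O$, the strategy is to reuse the pointwise estimates \eqref{pwp1}, \eqref{pwp2'} of Lemma \ref{lem-pw} together with Lemma \ref{lemma_pushing}, exactly as in the hyperbolic estimates \eqref{bo1p}--\eqref{bo2}. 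The key scaling relation linking $|p(\vtil)'|$ to $a'$, namely $|p(\vtil)'|\sim(\e/\l)|a'|$ from \eqref{der-a}, yields $(p(\vtil)')^2\le C\e^3 a'/\l$ and $|a'||p(\vtil)'|\le C\e^2 a'$.

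For \eqref{para-B3}, Young's inequality with weight $\l$ gives
\[
|\Bcal_3(U)|\le \l\Dcal(U)+\frac{1}{4\l}\int_\RR \frac{(a')^2}{a}v^\b(p(v)-p(\vtil))^2\, d\x.
\]
Using $(a')^2/a\le C\e\l|a'|$, the second term is bounded by $C\e\int|a'|v^\b(p(v)-p(\vtil))^2\,d\x$. Splitting into $\O$ (dominated by $C\Gcal_v(U)$ via the local quadratic estimate since $v^\b$ is bounded there) and $\O^c$ (where Lemmas \ref{lem-pw} and \ref{lemma_pushing} yield a contribution $\le C(\e/\l)\Dcal(U)+C(\e^6/\l^4)\Gcal_v(U)$, exactly as in the proof of \eqref{bo1p}), we obtain \eqref{para-B3}.

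For \eqref{para-B4}, using $|a'||p(\vtil)'|\le C\e^2 a'$, Young's inequality, and $|v^\b-\vtil^\b|\le C|v-\vtil|$, reduces the estimate to $C\e^2\int a'(|p(v)-p(\vtil)|^2+|v-\vtil|^2)\,d\x$; on $\O$ this is bounded by $C\e^2\Gcal_v(U)$, while the $\O^c$-part is handled via the pointwise bounds to yield the diffusion contribution $C(\e^3/\l)\Dcal(U)$. For \eqref{para-B5}, Cauchy--Schwarz gives
\[
|\Bcal_5(U)|^2\le \Dcal(U)\cdot\int_\RR a\,(p(\vtil)')^2 v^{-\b}(v^\b-\vtil^\b)^2\,d\x,
\]
and the second factor, using $(p(\vtil)')^2\le C\e^3 a'/\l$ together with $(v^\b-\vtil^\b)^2\le C\Phi(v/\vtil)$ on $\O$ (and the pointwise/truncation argument on $\O^c$), is bounded by $C(\e^3/\l)\Gcal_v(U)$. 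Balancing by AM--GM with weight $\sqrt{\e\l}$ produces \eqref{para-B5}.

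The inequality \eqref{para-total} follows by summing \eqref{para-B3}--\eqref{para-B5}, and \eqref{para-tot} is then immediate from \eqref{para-total} combined with \eqref{locE}, which provides $\Gcal_v(U)\le C\e^2/\l$ and absorbs the $C\e\Gcal_v(U)$ term into the leading $C^*\e^2/\l$. The principal technical hurdle is controlling the off-$\O$ integrals where $v^\b$ may grow as $v\to\infty$; this is the same difficulty that arose in Section \ref{section_hyperbolic}, and is resolved by transferring to the truncations $\vbar_s,\vbar_b$ and invoking Lemmas \ref{lem-pw} and \ref{lemma_pushing} as in the proofs of \eqref{bo1p}--\eqref{bo2}.
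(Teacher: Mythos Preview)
Your overall strategy matches the paper's, but there is a genuine gap in the treatment of $\Bcal_4$. After writing $|a'||p(\vt)'|\le C\e^2 a'$ and $|v^\b-\vt^\b|\le C|v-\vt|$, you apply an \emph{unweighted} Young inequality and reduce to $C\e^2\int_\RR a'\big(|p(v)-p(\vt)|^2+|v-\vt|^2\big)\,d\x$. Neither summand is controlled on $\O^c$: on the small-$v$ region $\{p(v)-p(\vt)>\d_3\}$ one has $|p(v)-p(\vt)|^2\sim v^{-2}$, while the pointwise bounds of Lemma~\ref{lem-pw} only control $v^{\b/2}(p(v)-p(\vbar_b))$, never $p(v)-p(\vbar_b)$ alone; on the large-$v$ region $\{p(v)-p(\vt)<-\d_3\}$ one has $|v-\vt|^2\sim v^2$, whereas the only available bound \eqref{locE} gives $\int a'\Phi(v/\vt)\sim\int a'\,v$. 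Hence the step ``handled via the pointwise bounds'' does not close as written.

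The paper fixes this by weighting Young's inequality with $v^\b$, producing
\[
|\Bcal_4(U)|\le C\frac{\e}{\l}\int_\RR |a'|^2 v^\b\big(p(v)-p(\vt)\big)^2\,d\x + C\frac{\e}{\l}\int_\RR |a'|^2\frac{(v^\b-\vt^\b)^2}{v^\b}\,d\x.
\]
The first integral is exactly the quantity \eqref{para-core} already isolated in the $\Bcal_3$ analysis. In the second, the factor $v^{-\b}$ is what rescues the large-$v$ region, since then $(v^\b-\vt^\b)^2/v^\b\le |v^\b-\vt^\b|\le C|v-\vt|\le C\Phi(v/\vt)$; on the small-$v$ region one instead invokes \eqref{powerup} to manufacture a $v^\b(p(v)-p(\vbar_b))^2$ structure amenable to \eqref{pwp2}. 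Your arguments for $\Bcal_3$ and $\Bcal_5$ are essentially the paper's, but note that the large-$v$ contribution in $\Bcal_3$ is \emph{not} handled ``exactly as in the proof of \eqref{bo1p}'' (which involves no unbounded $v^\b$ factor): the needed new ingredient is the observation $v\,\one{p(v)-p(\vt)<-\d_3}\le C\Phi(v/\vt)$, recorded as \eqref{Q-lin}.
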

\begin{proof}
\bpf{para-B3} First of all, we apply H\"older's inequality to get
\begin{equation} \label{B3}
\begin{aligned}
\abs{\Bcal_3(U)} 
&\le \l \Big(\int_\RR a v^\b \abs{\rd_\x(p(v)-p(\vtil))}^2 d\x \Big)
+ \frac{1}{\l}\Big(\int_\RR |a'|^2 v^\b(p(v)-p(\vtil))^2 d\x \Big).
\end{aligned}
\end{equation}
Using Young's inequality, we get
\[
\int_\RR a' v^\b(p(v)-p(\vtil))^2 d\x \leq
\underbrace{C \int_\RR a' v^\b(p(v)-p(\vbar))^2 d\x}_{\eqqcolon \Bcal_{31}}
+ \underbrace{C\int_\RR a' v^\b(p(\vbar)-p(\vtil))^2 d\x}_{\eqqcolon \Bcal_{32}}.
\]
Before proceeding further, we make the following observation:
since \(\Phi(v/\vtil)/v\) is increasing along \(v\) on \((\vtil,+\infty)\), we have
\begin{equation} \label{Q-lin}
v\one{p(v)-p(\vtil)<-\d_3} \le C\Phif{v}{\vtil}.
\end{equation}
For $\Bcal_{31}$, we use \eqref{pwp2} with Lemma \ref{lemma_pushing}, \eqref{Qconst-sb} and \eqref{locE}, and \eqref{Q-lin} with \(\b\in[0,1]\) to have
\begin{align*}
&\abs{\Bcal_{31}(U)} = \int_\RR a' v^\b(p(v)-p(\vbar))^2\Big(\one{p(v)-p(\vtil)>\d_3} +\one{p(v)-p(\vtil)<-\d_3}\Big) d\x \\
&\leq C\Dcal(U)\int_\RR a'\int_{\x_0}^\x \one{p(v)-p(\vtil)>\d_3}d\z d\x + C\frac{\e^4}{\l^3}\Gcal_v(U) + C\int_\RR a' v\one{p(v)-p(\vtil)<-\d_3} d\x\\
&\leq C\frac{\e}{\l}\Dcal(U) + C\frac{\e^4}{\l^3}\Gcal_v(U) + C\Gcal_v(U).
\end{align*}
For \(\Bcal_{32}\), since \(\b\in[0,1]\), we use \eqref{p-quad}, \eqref{Phi-sim} and \eqref{Q-lin} to obtain
\begin{align*}
\abs{\Bcal_{32}(U)} &\leq
C\int_\RR a'\abs{p(\vbar)-p(\vtil)}^2 \one{p(v)-p(\vtil)\ge -\d_3}d\x
+C\int_\RR a' v \one{p(v)-p(\vtil)<-\d_3}d\x \leq C\Gcal_v(U).
\end{align*}
Hence, from the \(L^{\infty}\) bound of \(a'\) in \eqref{der-scale}, the following holds: 
\begin{equation} \label{para-core}
\int_\RR |a'|^2 v^\b(p(v)-p(\vtil))^2 d\x \le C\e^2 \Dcal(U) +C \e\l\Gcal_v(U).
\end{equation}
This together with \eqref{B3} implies \eqref{para-B3} as we desired.

\bpf{para-B4} It holds by \eqref{der-a} and Young's inequality that
\[
\abs{\Bcal_4(U)}
\le C \frac{\e}{\l}\int_\RR |a'|^2 v^\b(p(v)-p(\vtil))^2 d\x +C \frac{\e}{\l}\int_\RR |a'|^2 \frac{(v^\b-\vtil^\b)^2}{v^\b} d\x .
\]
For the first term on the right-hand side, we use \eqref{para-core}.\\
For the second one, we decompose it into two parts as follows:
\begin{align*}
\int_\RR |a'|^2 \frac{(v^\b-\vtil^\b)^2}{v^\b}
\one{p(v)-p(\vtil)>2\d_3} d\x
+\int_\RR |a'|^2 \frac{(v^\b-\vtil^\b)^2}{v^\b}
\one{p(v)-p(\vtil)\le2\d_3} d\x\eqqcolon \Bcal_{41} + \Bcal_{42}.
\end{align*}
For \(\Bcal_{41}\), we use \eqref{der-scale}, \eqref{powerup}, \eqref{pwp2} and Lemma \ref{lemma_pushing} with \eqref{locE} to have
\begin{align*}
\abs{\Bcal_{41}(U)} &\leq C\e\l \int_\RR a' \frac{1}{v^\b}\one{p(v)-p(\vtil)>2\d_3}
\leq  C\e\l \int_\RR a' v^\b \abs{p(v) - p(\vbar_b)}^2d\x\\
&\leq C\e\l \int_\RR a' \int_{\x_0}^\x \one{p(v)-p(\vtil)>\d_3}d\z d\x + C\frac{\e^5}{\l^2}\Gcal_v(U)
\leq C\e^2 \Dcal(U) + C\frac{\e^5}{\l^2}\Gcal_v(U).
\end{align*}
Next, we use \eqref{der-scale}, \eqref{rel_Phi}\(_1\) and \eqref{rel_Phi-L} to have
\begin{align*}
\abs{\Bcal_{42}(U)}
&\leq C \e\l \int_\RR a' |v^\b - \vtil^\b|^2\one{\abs{p(v)-p(\vtil)}\le2\d_3}
+|v^\b - \vtil^\b|\one{p(v)-p(\vtil)<-2\d_3} d\x\\
&\leq C \e\l \int_\RR a' |v - \vtil|^2\one{\abs{p(v)-p(\vtil)}\le2\d_3}
+|v - \vtil|\one{p(v)-p(\vtil)<-2\d_3} d\x
\leq C\e\l \Gcal_v(U).
\end{align*}
Therefore, gathering \(\Bcal_{41}\) and \(\Bcal_{42}\), we obtain
\begin{equation}\label{est-B4}
\int_\RR |a'|^2 \frac{(v^\b-\vtil^\b)^2}{v^\b} d\x \leq C \e^2 \Dcal(U) + C \e\l\Gcal_v(U).
\end{equation}
Thus, we have
\[
\abs{\Bcal_4(U)}
\leq C \frac{\e^3}{\l} \Dcal(U) +C \e^2\Gcal_v(U).
\]
\bpf{para-B5} We use Young's inequality with \eqref{der-a} and apply \eqref{est-B4} to get
\[
\abs{\Bcal_5(U)}
\le C\frac{\e}{\l}\Big(\int_\RR a v^\b \abs{\rd_\x(p(v)-p(\vtil))}^2 d\x
+\int_\RR |a'|^2 \frac{(v^\b-\vtil^\b)^2}{v^\b} d\x\Big)
\le C\frac{\e}{\l}\Dcal(U)+C\e^2\Gcal_v(U).
\]

\bpf{para-total}
\eqref{para-total} is a direct consequence of \eqref{para-B3}, \eqref{para-B4} and \eqref{para-B5}.

\bpf{para-tot}
\eqref{para-tot} is a direct consequence of \eqref{para-total}, \eqref{locE} and \eqref{l2}.
\end{proof}

\subsubsection{Control of shift part}\label{section-shift}
Recall the functional \(Y\) in \eqref{ybg-first} first.
We then decompose it into four parts \(Y_g,Y_b,Y_l\) and \(Y_s\) as follows:
\[
Y=Y_g+Y_b+Y_l+Y_s,
\]
where
\begin{align*}
Y_g &\coloneqq
-\frac{1}{2\s_\e^2} \int_\O a' (p(v)-p(\vtil))^2 d\x
-\int_\O a' \Phif{v}{\vtil} d\x \\
&\qquad\qquad\qquad
-\int_\O a p(\vtil)' (v-\vtil) d\x 
+\frac{1}{\s_\e} \int_\O a \htil' (p(v)-p(\vtil)) d\x, \\
Y_b &\coloneqq
-\frac{1}{2}\int_\O a'\Big(h-\htil-\frac{p(v)-p(\vtil)}{\s_\e}\Big)^2 d\x 
-\frac{1}{\s_\e}\int_\O a'(p(v)-p(\vtil))\Big(h-\htil-\frac{p(v)-p(\vtil)}{\s_\e}\Big) d\x,\\
Y_l &\coloneqq
\int_\O a \htil' \Big(h-\htil-\frac{p(v)-p(\vtil)}{\s_\e}\Big) d\x, \\
Y_s &\coloneqq
-\int_{\O^c} a'\Phif{v}{\vtil} d\x
-\int_{\O^c} a p(\vtil)' (v-\vtil) d\x
-\int_{\O^c} a'\frac{(h-\htil)^2}{2} d\x
+\int_{\O^c} a \htil' (h-\htil) d\x.
\end{align*}
Notice that \(Y_g\) consists of terms related to \(v-\vtil\), while \(Y_b\) and \(Y_l\) consist of terms related to \(h-\htil\), with \(Y_b\) being quadratic and \(Y_l\) linear in \(h - \htil\).
To apply Proposition \ref{prop:main3} in the proof of Proposition \ref{prop:main}, we need to show that \(Y_g(U)-\Ical_g^Y(\vbar),Y_b(U),Y_l(U)\), and \(Y_s(U)\) are negligible by the good terms, which is stated in the following proposition.
\begin{proposition} \label{prop:shift}
For any constant \(C^*>0\), there exist constants \(\e_0,\d_0,C>0\) (in particular, \(C\) depends on \(C^*\)) such that for any \(\e<\e_0\) and \(\d_0^{-1}\e <\l<\d_0<1/2\), the following statements hold.
For any \(U\) such that \(\abs{Y(U)}\le\e^2\) and \(\Dcal(U)\le C^*\frac{\e^2}{\l}\),
\begin{equation}\label{Y-conc}
\begin{aligned}
&\abs{Y_g(U)-\Ical_g^Y(\vbar)}^2+\abs{Y_b(U)}^2+\abs{Y_l(U)}^2+\abs{Y_s(U)}^2 \\
&\le C\frac{\e^2}{\l}\bigg(\frac{\e}{\l}\Dcal(U)+\sqrt{\frac{\e}{\l}}\Gcal_v(U)+(\Gcal_v(U)-\Gcal_v(\Ubar))
+\Gcal_h^-(U) +\sqrt{\frac{\l}{\e}}\Gcal_h^+(U)\bigg)
\end{aligned}
\end{equation}
\end{proposition}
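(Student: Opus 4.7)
The plan is to estimate the four pieces of $Y$ separately and combine. For $Y_g(U) - \Ical_g^Y(\vbar)$, observe that since $v = \vbar$ on $\O$ by the definition of the truncation \eqref{vbar-k}, the integrands of $Y_g(U)$ and $\Ical_g^Y(\vbar)$ coincide pointwise on $\O$, so their difference reduces to $-\Ical_g^Y(\vbar)|_{\O^c}$. On $\O^c$ we have $|p(\vbar)-p(\vtil)| = \d_3$, making $\Phi(\vbar/\vtil)$ and $|\vbar-\vtil|$ uniformly bounded; applying \eqref{der-scale} to replace $|\vtil'|, |\htil'|$ by $C(\e/\l)a'$ and \eqref{Qconst-sb} for $\mathbf{1}_{\O^c}$ bounds every piece of $\Ical_g^Y(\vbar)|_{\O^c}$ by a constant multiple of $\int_{\O^c} a'\,d\x$. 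To reach the sharper form of the target right-hand side, we split $\O^c$ into the set $\{|p(v)-p(\vtil)| \ge 2\d_3\}$, on which the lower bound \eqref{rel_Phi-L} lets us absorb $\Phi(\vbar/\vtil)$ into $\Gcal_v(U) - \Gcal_v(\Ubar)$, and the transition layer $\{\d_3 < |p(v)-p(\vtil)| < 2\d_3\}$, on which the nonlinearization trick \eqref{NL} (as in \eqref{NL-B1}) combined with Lemma \ref{lemma_pushing} transfers the contribution into the diffusion term $(\e/\l)\Dcal(U)$.

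For $Y_b(U)$, both terms involve the quantity $h-\htil-(p(v)-p(\vtil))/\s_\e$ whose weighted $L^2$ norm on $\O$ equals $2\Gcal_h^+/\s_\e$. The quadratic piece is directly $\le C\Gcal_h^+$, and squaring together with $\Gcal_h^+ \le C\e^2/\l$ from \eqref{locE} yields the bound. For the cross term we apply Cauchy--Schwarz, using \eqref{p-quad} on $\O$ to bound $\int_\O a'(p(v)-p(\vtil))^2\,d\x \le C\Gcal_v \le C\e^2/\l$. For $Y_l(U)$, apply Cauchy--Schwarz with weight $a'$:
\begin{equation*}
|Y_l|^2 \le \Big(\int_\O \tfrac{|a\htil'|^2}{a'}\,d\x\Big)\Big(\int_\O a'\bigl(h-\htil-\tfrac{p(v)-p(\vtil)}{\s_\e}\bigr)^2 d\x\Big),
\end{equation*}
where the first factor is $\le C(\e/\l)^2\int_\RR a'\,d\x = C\e^2/\l$ by \eqref{der-scale} and $\int_\RR a'\,d\x = \l$, while the second factor is $2\Gcal_h^+/\s_\e$.

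The piece $Y_s(U)$ is handled term-by-term on $\O^c$: the $\Phi(v/\vtil)$ and $(h-\htil)^2$ integrals are bounded by $\Gcal_v/\s_\e$ and $2\Gcal_h^-/\s_\e$ respectively, then squared using \eqref{locE}. The $p(\vtil)'(v-\vtil)$ term uses the linear regime \eqref{rel_Phi} to get $|v-\vtil|\mathbf{1}_{\O^c} \le C\Phi(v/\vtil)$, combined with $|\vtil'| \le C(\e/\l)a'$. Finally, the cross term $\int_{\O^c} a\htil'(h-\htil)\,d\x$ is handled by Cauchy--Schwarz, using $\int_{\O^c}|a\htil'|^2/a'\,d\x \le C(\e/\l)^2 \int_{\O^c} a'\,d\x \le C(\e/\l)^2 \Gcal_v$ from \eqref{Qconst-sb}, producing a $C(\e/\l)\sqrt{\Gcal_v \Gcal_h^-}$ bound whose square fits inside the target.

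\textbf{Main obstacle.} The technically delicate step is the first one: obtaining the sharper factor $\sqrt{\e/\l}$ multiplying $\Gcal_v$ (or the shift to $\Gcal_v - \Gcal_v(\Ubar)$) rather than the naive $\Gcal_v$ bound coming from $\int_{\O^c} a'\,d\x \le C\Gcal_v$. This requires the careful decomposition of $\O^c$ into the transition layer and its complement, followed by the nonlinearization trick \eqref{NL} combined with Lemma \ref{lemma_pushing} to transfer the transition-layer contribution into the diffusion $\Dcal(U)$. The remaining three pieces reduce to routine applications of Cauchy--Schwarz, the scaling \eqref{der-scale}, the pointwise bound \eqref{Qconst-sb}, and the a priori smallness \eqref{locE}.
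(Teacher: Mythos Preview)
There are two genuine gaps in your plan.

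\textbf{First gap: the set $\O$ is one-sided, not two-sided.} You assert that ``$v=\vbar$ on $\O$'', but by definition $\O=\{p(v)-p(\vtil)\le\d_3\}$ while $\vbar$ is the \emph{two-sided} truncation $p(\vbar)-p(\vtil)=\psi_{\d_3}(p(v)-p(\vtil))$. Hence $v\ne\vbar$ on the subset $\O\cap\{p(v)-p(\vtil)<-\d_3\}$, where $p(\vbar)-p(\vtil)=-\d_3$. Your reduction of $Y_g(U)-\Ical_g^Y(\vbar)$ to $-\Ical_g^Y(\vbar)|_{\O^c}$ therefore drops the entire big-$v$ region. The fix is straightforward but must be done: on this region the integrands of $Y_g$ and $\Ical_g^Y(\vbar)$ differ by quantities controlled by $\int_\O a'|p(v)-p(\vbar)|$ and $\int_\RR a'|v-\vbar|$, which are then handled by the \emph{same} nonlinearization trick \eqref{NL} combined with \eqref{pwp1} and Lemma~\ref{lemma_pushing} (for the big-$v$ side, using $\vbar_s$), together with \eqref{rel_Phi-L} to pass $|v-\vbar|$ into $\Gcal_v(U)-\Gcal_v(\Ubar)$. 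In the paper these contributions are exactly the terms $\int_\O a'|p(v)-p(\vbar)|^2$, $\int_\O a'|p(v)-p(\vbar)|$, $\int_\RR a'(\Phi(v/\vtil)-\Phi(\vbar/\vtil))$, $\int_\RR a'|v-\vbar|$ appearing in Step~1.

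\textbf{Second gap: the naive bound on $Y_1^s$ is too weak.} You bound $|Y_1^s|\le \Gcal_v/\s_\e$ and square using \eqref{locE} to get $|Y_1^s|^2\le C\frac{\e^2}{\l}\Gcal_v$. But the right-hand side of \eqref{Y-conc} carries only $\sqrt{\e/\l}\,\Gcal_v$ and $(\Gcal_v-\Gcal_v(\Ubar))$, not a bare $\Gcal_v$; your bound does not fit (consider, e.g., $U$ with $\Gcal_v\sim\e^2/\l$ concentrated on $\{|p(v)-p(\vtil)|\le\d_3\}$ and a nontrivial but small $\O^c$). The cure is the splitting you already identified for the first piece: on $\O^c$ write $\Phi(v/\vtil)=[\Phi(v/\vtil)-\Phi(\vbar/\vtil)]+\Phi(\vbar/\vtil)$, bound the first bracket by $\Gcal_v(U)-\Gcal_v(\Ubar)$, and use that $\Phi(\vbar/\vtil)$ is uniformly bounded on $\O^c$ (since $p(\vbar)=p(\vtil)+\d_3$ there) so that the second piece is $\le C\int_{\O^c}a'$, which \eqref{bo1pJ3} converts into $C\frac{\e}{\l}\Dcal+C\frac{\e^6}{\l^4}\Gcal_v$. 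This is precisely why the paper treats $Y_1^s$ (and $Y_2^s$) jointly with $Y_g-\Ical_g^Y(\vbar)$ in Step~1, rather than separately.

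The remaining parts of your plan ($Y_b$, $Y_l$, $Y_3^s$, $Y_4^s$) are essentially the paper's Steps~2--3 and are fine, with one minor remark: your justification ``$\int_\O a'(p(v)-p(\vtil))^2\le C\Gcal_v$ via \eqref{p-quad}'' needs a small supplement on $\O\cap\{p(v)-p(\vtil)<-\d_3\}$ (where $|p(v)-p(\vtil)|$ is merely bounded, not small), handled again by \eqref{Qconst-sb}.
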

\begin{proof}
We split the proof into three steps.
\step{1}
First of all, we use the notations \(Y_1^s,Y_2^s,Y_3^s\) and \(Y_4^s\) for the terms of \(Y_s\) as follows:
\[
Y_s =
\underbrace{-\int_{\O^c} a'\Phif{v}{\vtil} d\x}_{\eqqcolon Y_1^s}
\underbrace{-\int_{\O^c} a p(\vtil)' (v-\vtil) d\x}_{\eqqcolon Y_2^s}
\underbrace{-\int_{\O^c} a'\frac{(h-\htil)^2}{2} d\x}_{\eqqcolon Y_3^s}
\underbrace{-\int_{\O^c} a \htil' (h-\htil) d\x}_{\eqqcolon Y_4^s}.
\]
Then, we observe the following:
\begin{align*}
&\abs{Y_g(U)-\Ical_g^Y(\vbar)}+\abs{Y_1^s(U)-Y_1^s(\Ubar)}+\abs{Y_1^s(\Ubar)}
+\abs{Y_2^s(U)-Y_2^s(\Ubar)}+\abs{Y_2^s(\Ubar)} \\
&\qquad\le
C\int_\O a' \abs{p(v)-p(\vbar)}^2 d\x
+C\int_\O a' \abs{p(v)-p(\vbar)} d\x
+C\int_{\O^c} a' d\x\\
&\qquad\qquad
+C\int_\RR a' \Big(\Phif{v}{\vtil}-\Phif{\vbar}{\vtil}\Big) d\x
+C\int_\RR a' \abs{v-\vbar} d\x.
\end{align*}
Note that the first line of the right-hand side can be controlled exactly the same as \eqref{bo1p}.
Moreover, we use \eqref{rel_Phi-L} to bound the remaining terms.
Hence, we obtain
\begin{equation} \label{YgYs12}
\begin{aligned}
&\abs{Y_g(U)-\Ical_g^Y(\vbar)}+\abs{Y_1^s(U)-Y_1^s(\Ubar)}+\abs{Y_2^s(U)-Y_2^s(\Ubar)}+ \abs{Y_l(U)}+\abs{Y_s(U)}  \\
&\qquad\le C\frac{\e}{\l}\Dcal(U)+C\frac{\e^6}{\l^4}\Gcal_v(U)+C(\Gcal_v(U)-\Gcal_v(\Ubar)).
\end{aligned}
\end{equation}
On the other hand, by the definitions of \(\Gcal_h^+, \Gcal_h^-\) and \(\Bcal_1^+\), we have 
\[
\abs{Y_3^s(U)}+\abs{Y_b(U)}
\le C\big(\Gcal_h^+(U)+\Gcal_h^-(U)+\abs{\Bcal_1^+(U)}\big).
\]
Moreover, since 
\[
\Gcal_h^+(U)
\le C\int_\O a' \Big((h-\htil)^2+(p(v)-p(\vtil))^2\Big)d\x
\le C\int_\O a' (h-\htil)^2d\x + C\abs{\Bcal_1^+(U)},
\]
it holds from \eqref{bo} that 
\[
\abs{Y_3^s(U)}+\abs{Y_b(U)} \le C\int_\RR a' (h-\htil)^2d\x
+C^*\Big(\frac{\e^2}{\l} + \Big(\frac{\e}{\l}\Big)^{\frac{3}{4}} \Dcal(U)\Big).
\]
Therefore, using \eqref{locE}, \eqref{l2} and the assumption \(\Dcal(U)\le C^*\frac{\e^2}{\l}\), it follows from \eqref{YgYs12} and the above estimate that 
\[
\abs{Y_g(U)-\Ical_g^Y(\vbar)}+\abs{Y_b(U)}+\abs{Y_1^s(U)}+\abs{Y_2^s(U)}+\abs{Y_3^s(U)} \le C\frac{\e^2}{\l}.
\]
\step{2}
First of all, using Young's inequality, \eqref{p-quad}, \eqref{Phi-sim} and \eqref{bo1p}, we estimate
\begin{align*}
&\abs{\int_\O a'(p(v)-p(\vtil))\Big(h-\htil-\frac{p(v)-p(\vtil)}{\s_\e}\Big)d\x}
\le \sqrt{\frac{\l}{\e}}\Gcal_h^+(U)
+C\sqrt{\frac{\e}{\l}} \Bcal_1^+(U) \\
&\qquad\qquad\le \sqrt{\frac{\l}{\e}}\Gcal_h^+(U)
+C\sqrt{\frac{\e}{\l}} \Big(\Ical_1(\vbar)+(\Bcal_1^+(U)-\Ical_1(\vbar))\Big) \\
&\qquad\qquad\le \sqrt{\frac{\l}{\e}}\Gcal_h^+(U)
+C\sqrt{\frac{\e}{\l}} \Big(C\Gcal_v(U)+C\frac{\e}{\l}\Dcal(U)\Big).
\end{align*}
Therefore, this together with \eqref{YgYs12} yields 
\begin{align*}
&\abs{Y_g(U)-\Ical_g^Y(\vbar)}+\abs{Y_b(U)}+\abs{Y_1^s(U)}+\abs{Y_2^s(U)}+\abs{Y_3^s(U)} \\
&\le C\frac{\e}{\l}\Dcal(U)
+C\sqrt{\frac{\e}{\l}}\Gcal_v(U)
+C(\Gcal_v(U)-\Gcal_v(\Ubar))
+C\Gcal_h^-(U)
+C\sqrt{\frac{\l}{\e}}\Gcal_h^+(U).
\end{align*}

\step{3}
Using H\"older's inequality and \eqref{der-scale} yields
\begin{align*}
\abs{Y_l(U)}^2+\abs{Y_4^s(U)}^2
&\le \bigg(\int_\RR a\htil' d\x\bigg) \bigg(\int_\O a\htil'  \Big(h-\htil -\frac{p(v)-p(\vtil)}{\s_\e}\Big)^2  d\x + \int_{\O^c} a\htil' (h-\htil)^2 d\x \bigg) \\
&\le C\frac{\e^2}{\l} (\Gcal_h^+(U)+\Gcal_h^-(U)).
\end{align*}
Therefore, this together with \(Step \,1\) and \(Step \,2\) yields \eqref{Y-conc} as we desired.
\end{proof}

\subsection{Proof of Proposition \ref{prop:main}}\label{section-main}
We now prove the main Proposition \ref{prop:main}.
We split the proof into two steps, depending on the strength of the dissipation term \(\Dcal(U)\).
\step{1}
We first consider the case of \(\Dcal(U)\ge 10C^*\frac{\e^2}{\l}\), where the constant \(C^*\) is chosen to be the greater one between constants \(C^*\) given in Proposition \ref{prop_hyperbolic_out} and Proposition \ref{prop_parabolic_out}.
Then, using \eqref{bo} and \eqref{para-tot} and taking \(\d_0\) small enough, we have
\begin{align*}
\Rcal(U)
&\le
2\abs{\Bcal_{\d_3}(U)} + \abs{\Bcal_1^+(U)} + 2\abs{\Jcal^{para}(U)} - (1-\d_0)\Dcal(U) \\
&\le 5C^*\frac{\e^2}{\l} - (1-C\d_0-C\sqrt{\d_0})\Dcal(U)
\le 5C^*\frac{\e^2}{\l} - \frac{1}{2}\Dcal(U) \le 0,
\end{align*}
which gives the desired result. 

\step{2}
We now assume the other alternative, i.e., \(\Dcal(U)\le 10C^* \frac{\e^2}{\l}\). 
Since we have \eqref{lbis}, we choose the small constant \(\d_3>0\) of Proposition \ref{prop:main3} associated to the constant \(C_2\) in \eqref{lbis}.
Then we define the truncation of size \(\d_3\).
Using 
\[
\Ical_g^Y(\vbar) = Y(U) - (Y_g(U)-\Ical_g^Y(\vbar)) - Y_b(U) - Y_l(U) - Y_s(U),
\]
we have 
\[
-5\abs{Y(U)}^2 \le -\abs{\Ical_g^Y(\vbar)}^2 + 5\abs{Y_g(U)-\Ical_g^Y(\vbar)}^2 + 5\abs{Y_b(U)}^2 + 5\abs{Y_l(U)}^2 + 5\abs{Y_s(U)}^2.
\]
Now, taking \(\d_0>0\) sufficiently small such that \(\d_0 \le \d_3^5\), we then use \eqref{keyD} to obtain that for \(\e<\e_0(\le \d_3)\) and \(\e/\l<\d_0\), 
\begin{align*}
\Rcal(U) &\le -\frac{5}{\e\d_3}Y(U)^2 + \Bcal_{\d_3}(U) + \d_0\frac{\e}{\l}\abs{\Bcal_{\d_3}(U)} + \d_0\frac{\e}{\l}\Bcal_1^+(U) + \Jcal^{para}(U) + \d_0\abs{\Jcal^{para}(U)} \\
&\qquad -\Gcal_h^-(U)-\frac{1}{2}\Gcal_h^+(U) -\left(1-\d_0\frac{\e}{\l}\right)\Gcal_v(U) - (1-\d_0)\Dcal(U) \\
&\le -\frac{1}{\e\d_3}\abs{\Ical_g^Y(\vbar)}^2 + \Ical_1(\vbar) + \Bcal_2(\Ubar) 
+ \d_0\frac{\e}{\l}\left(\abs{\Ical_1(\vbar)} + \abs{\Bcal_2(\Ubar)}\right) -\left(1-\d_3\frac{\e}{\l}\right)\Gcal_v(\Ubar) \\
&\qquad - (1-\d_3)\Dcal(\Ubar) 
+\underbrace{
\left(1+\d_0\frac{\e}{\l}\right)\big(
\abs{\Bcal_1^+(U)-\Ical_1(\vbar)} + \abs{\Bcal_1^-(U)} + \abs{\Bcal_2(U)-\Bcal_2(\Ubar)}
\big)
}_{\eqqcolon J_1} \\
&\qquad
+\underbrace{\left(1+\d_0\right)\abs{\Jcal^{para}(U)}}_{\eqqcolon J_2}
+\underbrace{\frac{5}{\e\d_3}\left(\abs{Y_g(U)-\Ical_g^Y(\vbar)}^2 + \abs{Y_b(U)}^2 + \abs{Y_l(U)}^2 + \abs{Y_s(U)}^2\right)}_{\eqqcolon J_3} \\
&\qquad
-\Gcal_h^-(U)-\frac{1}{2}\Gcal_h^+(U)-\frac{1}{2}(\Gcal_v(U)-\Gcal_v(\Ubar))
- (\d_3-\d_0)\frac{\e}{\l}\Gcal_v(\Ubar) 
- (\d_3-\d_0)\Dcal(U).
\end{align*}
We now claim that \(J_1, J_2, J_3\) are controlled by the remaining good terms above. 
Indeed, it follows from \eqref{bo1p}-\eqref{bo2} that for any \(\l, \e/\l<\d_0(\le \d_3^5)\),
\begin{align*}
J_1 &\le \d_3 \Gcal_h^-(U) + C\Big(\frac{\e}{\l}\Big)^{\frac{3}{4}}\Dcal(U)+ C\frac{\e}{\l}(\Gcal_v(U)-\Gcal_v(\Ubar))+ C\frac{\e^6}{\l^4} \Gcal_v(U)  \\
&\le \d_3\Gcal_h^-(U) + \frac{\d_3}{4}\Dcal(U)
+ \frac{\d_3}{4}(\Gcal_v(U)-\Gcal_v(\Ubar))
+ \frac{\d_3}{4}\frac{\e}{\l}\Gcal_v(\Ubar)
\end{align*}
and from \eqref{para-total},
\begin{align*}
J_2 &\le C \Big(\l+\frac{\e}{\l}\Big)\Dcal(U) + C\e\Gcal_v(U)
\le \frac{\d_3}{4} \Dcal(U)
+ \frac{\d_3}{4}(\Gcal_v(U)-\Gcal_v(\Ubar))
+ \frac{\d_3}{4}\frac{\e}{\l}\Gcal_v(\Ubar).
\end{align*}
For \(J_3\), from \eqref{Y-conc}, we have
\begin{align*}
J_3 
&\le \frac{C}{\d_3}\sqrt{\frac{\e}{\l}}
\Big(\Dcal(U)
+ \Gcal_h^-(U) + \Gcal_h^+(U)
+(\Gcal_v(U)-\Gcal_v(\Ubar))
+\frac{\e}{\l}\Gcal_v(\Ubar)\Big) \\
&\le \frac{\d_3}{4}\Big(\Dcal(U) + \Gcal_h^-(U)+\Gcal_h^+(U)
+(\Gcal_v(U)-\Gcal_v(\Ubar)) 
+\frac{\e}{\l}\Gcal_v(\Ubar)\Big).
\end{align*}
Therefore, we have 
\begin{align*}
\Rcal(U) &\le -\frac{1}{\e\d_3}\abs{\Ical_g^Y(\vbar)}^2 + \Ical_1(\vbar) + \Bcal_2(\Ubar) 
+ \d_0\frac{\e}{\l}\left(\abs{\Ical_1(\vbar)} + \abs{\Bcal_2(\Ubar)}\right) \\
&\qquad-\left(1-\d_3\frac{\e}{\l}\right)\Gcal_v(\Ubar) - (1-\d_3)\Dcal(\Ubar).
\end{align*}
Since the above quantities \(\Ical_g^Y(\vbar)\), \(\Bcal_2(\Ubar)\), \(\Gcal_v(\Ubar)\), and \(\Dcal(\Ubar)\) depends only on \(\vbar\), and 
\[
\Bcal_2(\Ubar) = \Ical_2(\vbar), \qquad
\Dcal(\Ubar) = \Dcal(\vbar),
\]
it holds by Proposition \ref{prop:main3} that \(\Rcal(U)\le 0\). 
This completes the proof of Proposition \ref{prop:main}. \qed


\section{Proof of Theorem \ref{thm_inviscid}: Inviscid Limits and Stability}
\setcounter{equation}{0}

The proof of Theorem \ref{thm_inviscid} is largely similar to that in \cite{KV-Inven}, whose key ingredient is on the uniform-in-\(\n\) estimate obtained from Theorem \ref{thm_main}.
In particular, for the existence of well-prepared initial data \eqref{ini_conv}, the existence of inviscid limits \eqref{wconv} and the stability estimate \eqref{uni-est}, the proofs are essentially the same.
So, we present the proof of \eqref{X-control} only.

Firstly, we obtain the uniform-in-\(\n\) estimates from the contraction estimate of Theorem \ref{thm_main}.
Let \(\{ (v^\nu,u^\nu) \}_{\nu>0}\) denote a sequence of solutions to \eqref{inveq}-\eqref{pressure} on \((0,T)\) subject to the sequence of initial datum \(\{(v_0^\nu,u_0^\nu)\}_{\n>0}\).
Moreover, throughout this section, let \(C\) denote a positive constant that may vary from line to line but remains independent of \(\n\). (This may depend on \(\e,\l\) and \(T\) now.)

We apply Theorem \ref{thm_main} to the rescaled functions
\[
(v,u)(t,x) = (v^\n, u^\n)(\n t, \n x), \quad \text{and} \quad (\vtil,\util)(x) = (\vtil^\n, \util^\n)(\n x),
\]
perform the change of variables \(t \mapsto t/\nu \) and \(x \mapsto x/\nu \) and make use of \eqref{ini_conv} so that we obtain that for any \(\d\in(0,1)\), there exists a constant \(\nu_*>0\) such that for any \(\nu<\nu_*\) and any \(t\in[0,T]\), the following holds:
\begin{align}
\begin{aligned} \label{ineq-m}
&\int_\RR \eta\left((\vn,\hn)(t,x)|(\vtn,\htn)(x-X_\nu(t))\right)dx \\
&\qquad +\int_0^{T} \int_\RR \abs{(\vtn)'(x-X_\nu(s))} \Phi(\vn(s,x)/\vtn(x-X_\nu(s))) dxds \\
&\qquad +\nu\int_0^{T} \int_\RR (\vn)^\b(s,x)\abs{\rd_x\big(p(\vn(s,x))-p(\vtn(x-X_\nu(s)))\big)}^2 dxds 
\le C \Ecal_0 + \d,
\end{aligned}
\end{align}
where \(\Ecal_0 \coloneqq \int_\RR \eta \left( (v^0,u^0) | (\vbar,\ubar) \right) dx\), \(X_\nu(t)\coloneqq \nu X(t/\nu)\), and
\[
h^\n \coloneqq u^\n - \n \frac{(\vn)_x}{(\vn)^{\a+1}}, \qquad \htil^\n \coloneqq \util^\n - \n \frac{(\vtn)_x}{(\vtn)^{\a+1}}.
\]

To show \eqref{X-control}, it is also necessary to demonstrate the convergence of \(\{X_\nu\}_{\nu>0}\), as stated in the following lemma.
\begin{lemma}\label{lem-X}
There exists \(X_\infty \in BV((0,T))\) such that
\begin{align*}
X_\nu \to X_\infty \quad \text{in } L^1(0,T), \quad \text{up to subsequence as } \nu \to 0.
\end{align*}
\end{lemma}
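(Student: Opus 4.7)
The plan is to show that $\{X_\nu\}_{\nu>0}$ is uniformly bounded in $BV((0,T))$; then Helly's selection theorem (equivalently, the compact embedding $BV((0,T)) \hookrightarrow L^1(0,T)$) immediately produces a subsequence converging in $L^1(0,T)$ to some $X_\infty \in BV((0,T))$. Since $X(0)=0$ in the ODE \eqref{X-def}, we have $X_\nu(0)=\nu X(0)=0$, so a uniform BV bound automatically gives a uniform $L^\infty$ bound on $X_\nu$ as well.

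The key is to carefully unpack the scaling $X_\nu(t)=\nu X(t/\nu)$ and feed the shift estimate \eqref{est-shift} (applied on the rescaled time interval $[0,T/\nu]$) into it. Computing total variation and changing variables $s=t/\nu$ gives
\[
\int_0^T |\dot X_\nu(t)|\,dt=\nu\int_0^{T/\nu}|\dot X(s)|\,ds\le \frac{C\nu}{\e^2}\Big(\|f\|_{L^1(0,T/\nu)}+\tfrac{T}{\nu}\Big)\le \frac{C\nu}{\e^2}\cdot\frac{\lambda}{\delta_0\e}\int_\RR E(U_0|\tilde U)\,dx+\frac{CT}{\e^2}.
\]
Thus the whole task reduces to controlling $\int_\RR E(U_0|\tilde U)\,dx$, where $(U_0,\tilde U)$ are the rescaled data $U_0(x)=U_0^\nu(\nu x)$, $\tilde U(x)=\tilde U^\nu(\nu x)$.

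The step that requires a careful check is this last one: the $E$ functional in \eqref{E-rel-ent} contains the $v_x$-term, which behaves under the rescaling as $(v_0)_x(x)=\nu(v_0^\nu)_x(\nu x)$. Precisely this factor $\nu$ combines with $(v_0)^{-(\alpha+1)}$ to reproduce the $\nu$-adjusted effective velocity appearing on the left-hand side of \eqref{ini_conv}; a change of variable $y=\nu x$ then yields
\[
\int_\RR E(U_0|\tilde U)\,dx=\frac{1}{\nu}\int_\RR\!\Big[\Phi\!\left(v_0^\nu/\tilde v^\nu\right)+\tfrac12\Big(u_0^\nu-\nu\tfrac{(v_0^\nu)_x}{(v_0^\nu)^{\alpha+1}}-\tilde u^\nu+\nu\tfrac{(\tilde v^\nu)_x}{(\tilde v^\nu)^{\alpha+1}}\Big)^2\Big]dy,
\]
and by the well-prepared data hypothesis \eqref{ini_conv}, the bracketed integral is bounded by $2\Ecal_0$ for small $\nu$. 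Substituting this into the display above gives
\[
TV(X_\nu;[0,T])\le \frac{2C\lambda\,\Ecal_0}{\delta_0\,\e^3}+\frac{CT}{\e^2},
\]
which is independent of $\nu$. I expect the only subtle point to be precisely this scaling identification, because one must simultaneously keep track of how $\nu$ is absorbed into the time/space rescaling, into the derivative factor in the $h$-variable, and into the initial-data hypothesis \eqref{ini_conv}; everything else (Helly, the $L^\infty$ bound from $X_\nu(0)=0$, the identification of the limit) is routine.
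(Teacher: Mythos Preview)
Your proof is correct and follows essentially the same approach as the paper: bound $X_\nu'$ in $L^1(0,T)$ via the shift estimate \eqref{est-shift} applied on the rescaled interval $[0,T/\nu]$, identify $\nu\int_\RR E(U_0|\tilde U)\,dx$ with the well-prepared-data integral in \eqref{ini_conv} to get a $\nu$-independent bound, and invoke BV compactness. The only cosmetic difference is that you spell out the scaling identity for $E$ in full, whereas the paper compresses it into a single line.
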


\begin{proof}
Recall that \(X_\nu(t)= \nu X(t/\nu)\) and \(X'_\nu(t)= X'(t/\nu)\).
Then, thanks to \eqref{est-shift}, we get
\[
\abs{X'_\nu(t)} \le C \left( f_\nu(t) + 1 \right)    
\]
where \(f_\nu(t)\coloneqq f(t/\nu)\). Hence, it follows from \eqref{est-shift} and \eqref{ini_conv} that for any \(\nu<\nu_*\),
\[
\norm{f_\nu}_{L^1(0,T)}
=\nu \norm{f}_{L^1(0,T/\nu)}
\le C\nu \int_\RR E\big((v_0^\n, u_0^\n)(\n x)|(\vtn, \utn)(\n x)\big) dx
\le C \left( \Ecal_0 + 1 \right).
\]
This implies that \(\{f_\nu\}_{\nu>0}\) is bounded in \(L^1(0,T)\) and so \(X_\nu'\) is uniformly bounded in \(L^1(0,T)\).
Moreover, from \(X_\nu(0)=0\), we have
\begin{equation}\label{X-unif}
\abs{X_\nu(t)} \le Ct+C\int_0^t f_\nu(s) ds.
\end{equation}
In other words, \(X_\nu\) is uniformly bounded in \(L^1(0,T)\) as well.
Therefore, the compactness of BV (see for example \cite[Theorem 3.23]{AmbroFuscoPall00}) gives the desired result.
\end{proof}
We now present the proof of \eqref{X-control}.
The proof is motivated by \cite[Section 6.2]{EEK-BNSF}.
Thanks to \eqref{X-unif}, we choose a positive constant \(r=r(T)>1\) such that \(\norm{X_\n}_{L^\infty(0,T)}\le r/3\) for any \(\n\in(0,\n_*)\).
Note that \(r\) is a constant depends only on \(\Ecal_0\), not on \(\n\); we may take \(r\) to satisfy \(r\le C(\Ecal_0+T+1)\).
Then, we also consider a nonnegative smooth function \(\ps\colon \RR\to \RR\) such that \(\ps(x) = \ps(-x)\), \(\ps'(x)\le 0\) for all \(x\ge 0\), \(\abs{\ps'(x)}\le 2/r\) for all \(x\in \RR\), and
\[
\ps(x) = \begin{cases}
    1, &\text{ if } \abs{x}\le r, \\
    0, &\text{ if } \abs{x}\ge 2r.
\end{cases}
\]
Let \(\th\colon \RR\to \RR\) be a nonnegative smooth function such that \(\th(s) = \th(-s)\), \(\int_\RR \th = 1\) and \(\supp \th \subseteq [-1, 1]\), and for any \(\e>0\), let 
\[
\th_\e(s)\coloneqq \frac{1}{\e}\th(\frac{s-\e}{\e}). 
\]
Here, \(\e>0\) is a parameter, not the shock strength.
Then, for any given \(t\in(0, T)\) and any \(\e<t/2\), we define a nonnegative smooth function 
\[
\phi_{t, \e}(s) \coloneqq \int_0^s\left(\th_\e(\t)-\th_\e(\t-t)\right)d\t. 
\]
Since \(v_t^\nu-h_x^\nu = \n(\frac{v_x^\nu}{(\vn)^{\a+1}})_x\), we simply get
\begin{align*}
&\int_{[0, T]\times \RR} \left(\phi_{t, \e}'(s)\ps(x)\vn(s, x) - \phi_{t, \e}(s)\ps'(x)\hn(s,x) dsdx\right)dsdx\\
&\qquad=\int_{[0,T]\times\RR} \phi_{t,\e}(s)\ps'(x)\n\Big(\frac{v_x^\nu}{(\vn)^{\a+1}}\Big)(s,x)dsdx.
\end{align*}
Then, since the solutions belong to the function space \(\Hcal_T\), it holds that \(v_t^\nu\in L^2(0,T;L^2(\RR))\).
Hence, the continuity of \(\int_\RR \ps(x)\vn(s,x)dx\) with respect to \(s\) variable follows:
\begin{align*}
\abs{\int_\RR \ps(x)\vn(s,x)dx - \int_\RR \ps(x)\vn(s',x)dx}
&\le \int_{\abs{x}\le 2r}\int_s^{s'} \abs{v_t^\nu(t,x)} dtdx\\
&\le \sqrt{4r\abs{s-s'}} \norm{v_t^\nu}_{L^2(0,T;L^2(\RR))}.
\end{align*}
Notice that \(\vn,\hn\) and \((\frac{v_x^\nu}{(\vn)^{\a+1}})\) are all locally integrable.
Hence, we apply the dominated convergence theorem as \(\e\to0\) to obtain:
\begin{equation} \label{decomp}
\begin{aligned}
&\int_\RR \ps(x)v_0^\nu(x)dx - \int_\RR \ps(x)\vn(t,x)dx
-\int_0^t\int_\RR \ps'(x)\hn(s,x)dxds\\
&\qquad=\n\int_0^t\int_\RR \ps'(x)\Big(\frac{v_x^\nu}{(\vn)^{\a+1}}\Big)(s,x)dxds \eqqcolon J_1.
\end{aligned}
\end{equation}
Now, we decompose the left-hand side of \eqref{decomp} to find that
\begin{equation} \label{decomp-1}
\begin{aligned}
0&=J_1
+\underbrace{\int_\RR \ps(x) \Big(\vn(t,x)-\vtn(x-X_\n(t))\Big) dx}_{\eqqcolon J_2}
+\underbrace{\int_\RR \ps(x) (\vtn(x)-v_0^\nu(x)) dx}_{\eqqcolon J_3}\\
&\quad +\underbrace{\int_0^t \int_\RR \ps'(x)\Big(\hn(s,x)-\htn(x-X_\n(t))\Big)dxds}_{\eqqcolon J_4}\\
&\quad +\underbrace{\int_\RR \ps(x)\Big(\vtn(x-X_\n(t))-\vtn(x)\Big)dxds
+\int_0^t\int_\RR \ps'(x)\htn(x-X_\nu(s))dxds}_{\eqqcolon J_5}.
\end{aligned}
\end{equation}
The terms \(J_2,J_3,J_4,\) and \(J_5\) can be estimated in essentially the same manner as in \cite[Section 6.2]{EEK-BNSF}.
Hence, we do not present the details here; the bound is given as follows: 
\[
\abs{J_2}+\abs{J_3}+\abs{J_4}+\abs{J_5-(X_\n(t)-\s t)(v_--v_+)}
\le C(T)(\sqrt{\Ecal_0+\d}+\Ecal_0+\d)+C(\Ecal_0,T)\n.
\]
It remains to control \(J_1\).
Since we have \eqref{tail} and
\[
\abs{x-X_\n(s)} \ge \abs{x} - r/3 \ge r/2 \quad \text{for all } x\in[-2r,-r]\cup[r,2r],
\]
it holds that for any \(\n<\n_*\), 
\[
\sup_{r\le\abs{x}\le2r} (\vtn)'(x-X_\n(s)) \le \sup_{r\le\abs{x}\le2r} \frac{1}{\n}\vt'(\frac{x-X_\n(s)}{\n}) \le \frac{C}{\n}e^{-C\frac{r}{2\n}} \le C.
\]
Then, considering the support of \(\ps\) and the upper bound of \(\abs{\ps'}\), we get
\begin{align*}
&\abs{J_1}
\le C\n \hspace{-1mm}\int_0^t \hspace{-1mm}\int_{r\le\abs{x}\le2r} \abs{\frac{v_x^\nu}{(\vn)^{\a+1}}(s,x)}dxds\\
&\le C\n \hspace{-1mm}\int_0^t \hspace{-1mm}\int_{r\le\abs{x}\le2r} \hspace{-1mm}(\vn)^\b(s,x) \big(\abs{(p(\vn)(s,x)-p(\vtn)(x-X_\n(s)))_x}\hspace{-0.5mm}+\hspace{-0.5mm}\abs{p(\vtn)'(x-X_\n(s))}\big)dxds\\
&\le C\n \hspace{-1mm}\int_0^t \hspace{-1mm}\int_{r\le\abs{x}\le2r} (\vn)^\b(s,x) \abs{(p(\vn)(s,x)-p(\vtn)(x-X_\n(s)))_x}^2 dxds\\
&\qquad+C\n \hspace{-1mm}\int_0^t \hspace{-1mm}\int_{r\le\abs{x}\le2r} (\vn)^\b(s,x) dxds.
\end{align*}
The last term can be controlled with the inequality \(v^\b \le 1+\Phi(v/\vtil)\), and thus, using \eqref{ineq-m},
\[
\abs{J_1}\le C(\Ecal_0+\d)+C(\Ecal_0,T)\n.
\]
Thus, gathering all with \eqref{decomp-1}, it holds that for any \(\d>0\), there exists \(\n_*>0\) such that \(\n<\n_*\) implies
\[
\abs{X_\n(t)-\s t} \abs{v_+-v_-} \le C(T)(\sqrt{\Ecal_0+\d}+\Ecal_0+\d)+C(\Ecal_0,T)\n.
\]
Note that the \(L^1\) convergence in Lemma \ref{lem-X} gives pointwise convergence up to subsequence.
Therefore, for a.e. \(t\in(0,T)\), being a pointwise limit of \(X_\n(t)\), \(X_\infty(t)\) satisfies 
\[
\abs{X_\infty(t)-\s t} \abs{v_+-v_-} \le C(T)(\sqrt{\Ecal_0}+\Ecal_0).
\]
This completes the proof of \eqref{X-control}. \qed


\bibliographystyle{plain}
\bibliography{reference} 

\end{document}